\DeclareMathOperator{\Ad}{Ad}
\DeclareMathOperator{\ad}{ad}
\DeclareMathOperator{\Id}{Id}
\DeclareMathOperator{\Hgt}{ht}
\DeclareMathOperator{\Ric}{Ric}
\DeclareMathOperator{\Span}{span}
\DeclareMathOperator{\rnk}{rnk}
\newcommand{\fr}{\mathfrak}
\newcommand{\al}{\alpha}
\newcommand{\be}{\beta}
\newcommand{\bb}{\mathbb}
\DeclareMathOperator{\SO}{SO}
\DeclareMathOperator{\Sp}{Sp}
 \DeclareMathOperator{\SU}{SU}
\DeclareMathOperator{\U}{U}
\DeclareMathOperator{\G}{G}
\DeclareMathOperator{\F}{F}
\DeclareMathOperator{\E}{E}
\DeclareMathOperator{\Ss}{S}
\newcommand{\thickline}{\noalign{\hrule height 1pt}}
\newtheorem{theorem}{Theorem}
\newtheorem{lemma}{Lemma}
\newtheorem{remark}{Remark}
\newtheorem{prop}{Proposition}
\newtheorem{definition}{Definition}
\begin{document}

\title
{The classification of homogeneous Einstein metrics on flag manifolds with $b_2(M)=1$}
\author{Ioannis Chrysikos  and Yusuke Sakane}
 \address{Department of Mathematics and Statistics, Masaryk University, Brno  611 37, Czech Republic}
 \email{chrysikosi@math.muni.cz, xrysikos@master.math.upatras.gr}
 \address{Osaka University, Department of Pure and Applied Mathematics, Graduate School of Information Science and Technology, Toyonaka, 
Osaka 560-0043, Japan}
 \email{sakane@math.sci.osaka-u.ac.jp}

 \medskip
\noindent
\thanks{The first  author    was full-supported   
  by Masaryk University under the Grant Agency of Czech Republic, project no. P 201/ 12/ G028}

\begin{abstract}
Let $G$ be a simple compact connected Lie group.  We study homogeneous Einstein metrics for a class of compact   
 homogeneous spaces, namely generalized flag manifolds  $G/H$ with second Betti number  $b_{2}(G/H)=1$.  There are  8 infinite families $G/H$ corresponding to a classical simple Lie group $G$ and 25 exceptional flag manifolds, which all have   some common geometric features; for example they admit a  unique invariant complex structure which gives rise to  unique invariant K\"ahler--Einstein metric.  The most typical examples are the compact isotropy irreducible Hermitian symmetric spaces for which  the Killing form is the unique homogeneous Einstein metric (which is K\"ahler).  For non-isotropy irreducible spaces the classification of  homogeneous Einstein metrics has been completed  for 24 of the 26 cases.    In this paper we construct the Einstein equation for 
  the two unexamined cases,   namely the  flag manifolds $\E_8/\U(1)\times \SU(4)\times \SU(5)$ and $\E_8/\U(1)\times \SU(2)\times \SU(3)\times \SU(5)$. In order to  determine explicitly the Ricci tensors  of  an $\E_8$-invariant metric    we use a method  based  on the Riemannian submersions.  For both spaces we classify all homogeneous Einstein metrics and thus we conclude that any flag manifold $G/H$ with $b_{2}(M)=1$ admits a finite number of non-isometric non-K\"ahler invariant Einstein metrics.  The precise number of these metrics is given in Table 1.
  
 \medskip
\noindent 2000 {\it Mathematics Subject Classification.} Primary 53C25; Secondary 53C30.

\medskip
\noindent {\it Keywords}:  Homogeneous  Einstein metric,   flag manifold,    second Betti number, Riemannian submersion, finiteness conjecture, twistor fibration.  

\end{abstract}\maketitle

\markboth{Ioannis Chrysikos and Yusuke Sakane}{On the classification of homogeneous Einstein metrics on    flag manifolds with $b_2(M)=1$}
\section*{Introduction}
\markboth{Ioannis Chrysikos and Yusuke Sakane}{On the classification of homogeneous Einstein metrics on   flag manifolds with $b_2(M)=1$}

  Given a Riemannian manifold $M$ the question whether $M$ carries an Einstein metric, that is a Riemannian metric $g$ of constant Ricci curvature, is a fundamental one in Riemannian geometry.  The Einstein equation $\Ric_{g}=\lambda\cdot g$ $(\lambda\in\bb{R})$ reduces to a system of  a non-linear second order PDEs  and a good understanding of its solutions in the general case seems far from being attained.  If $M$ is compact, then Einstein metrics (of volume 1) become in a natural way  privileged metrics since they are characterized variational as  the critical points of the total scalar curvature functional $T : \mathcal{M}\to\bb{R}$, given by $T(g)=\int_{M}S_{g}dV_{g}$,  restricted to the set  $\mathcal{M}_1$ of Riemannian metrics of volume 1. However, even in this case general existence results are difficult to obtained.  If we consider a  homogeneous $G$-space  $M=G/H$,  then it is natural to work with $G$-invariant Riemannian metrics.  For such a metric   the Einstein equation  reduces to an algebraic system which is more manageable and  in some cases it can been solved explicitly.  Most known examples of Einstein manifolds are homogeneous.
    
A generalized flag manifold is an  adjoint orbit $M=\Ad(G)w$ $(w\in\fr{g})$  of a compact connected semi-simple Lie group $G$ and it can be represented as a  compact homogeneous space of the form $M=G/H=G/C(S)$, where $C(S)$ is the centralizer of a torus $S$ in $G$ (and thus $\rnk G=\rnk H$).  Generalized flag manifolds have been classified in terms of painted Dynkin diagrams and these have  K\"ahler metrics, that is, the homogeneous manifolds  $M=G/H$ can be expressed as  $G^{\bb{C}}/U$ where $G^{\bb{C}}$ is the complexification of $G$  and $U$ a parabolic subgroup of $G^{\bb{C}}$.  
 It is also known that there are  
  a finite number of invariant complex structures on $M$ and  for each complex structure there is a compatible $G$-invariant  K\"ahler--Einstein metric. In this paper we investigate  invariant Einstein metrics on  generalized flag manifolds $M=G/H$ of a compact connected simple Lie group $G$ with second Betti number $b_2(M)=1$.  Such a space  is determined  by painting black in the Dynkin diagram  of $G$ only one simple root. By \cite{Bor} it is known that $M=G/H$  admits a unique invariant complex structure, and thus a  unique K\"ahler-Einstein metric.  
 Compact 
 irreducible Hermitian symmetric spaces  are the most typical examples of this category, and these are the only flag manifolds for which the K\"ahler-Einstein metric is given by the Killing form.  Generalized flag manifolds $M=G/H$ with $b_{2}(M)=1$ can be divided  into following six classes, with respect to the height of the painted black simple root (see \textsection \ref{Lie}), or equivalently,  with respect to  the decomposition of the associated isotropy representation (see Table 1):
  
(A) The compact  irreducible Hermitian symmetric  spaces $M=G/H$, which admit (up to scaling)  a unique invariant Einstein metric.   In this case the height of the painted black simple root is equal to 1. 

(B) The flag manifolds $M=G/H$ for which the isotropy representation decomposes into two  inequivalent  irreducible $\Ad(H)$-submodules, i.e., $\fr{m}=\fr{m}_1\oplus\fr{m}_2$.  These spaces are determined by painting black a simple root with height 2 and their classification was obtained in \cite{Chry1} (see also \cite{Sakane}).  

(C) Seven  flag  manifolds $M=G/H$ with $\fr{m}=\fr{m}_1\oplus\fr{m}_2\oplus\fr{m}_3$.  These spaces were determined by painting black a simple root with height 3 \cite{Kim}.

(D)  Four flag manifolds $M=G/H$ with $\fr{m}=\fr{m}_1\oplus\fr{m}_2\oplus\fr{m}_3\oplus\fr{m}_4$.  These spaces are determined by painting black a simple root with height 4 \cite{Chry3}.  

(E) The flag manifold $M=G/H=\E_8/\U(1)\times \SU(4)\times \SU(5)$. It is determined by painting black  the simple root   $\al_4$ and its isotropy representation is such that  with $\fr{m}=\fr{m}_1\oplus\cdots\oplus\fr{m}_5$.

(F)  The flag manifold $M=G/H=\E_8/\U(1)\times \SU(2)\times \SU(3)\times \SU(5)$.  It is determined by painting black  the simple root  $\al_5$ and the associated isotropy representation is such that   $\fr{m}=\fr{m}_1\oplus\cdots\oplus\fr{m}_6$.

    \begin{center}
     {    {\bf Table 1.}  \ \ \Small    The number $\mathcal{E}(M)$ of non-isometric invariant Einstein metrics on generalized flag manifolds with $b_2(M)=1$.}
      \end{center}
     \begin{center} 
    {\footnotesize
        \begin{tabular}{ll|ll}
 \hline 
   $M=G/H$ with $b_2(M)=1$       &     $\mathcal{E}(M)$ &  $M=G/H$ with $b_2(M)=1$       &     $\mathcal{E}(M)$    \\
  \thickline
$\underline{\bf{ (A) \  Hermitian \ Symmetric \ Spaces}}$ \ (\cite{Wo})          &        &  $\underline{\bf{ (C)  \  \fr{m}=\fr{m}_1\oplus\fr{m}_2\oplus\fr{m}_{3}}}$ \ (\cite{Kim}, \cite{Stauros})&         \\  
$\SU(\ell)/\Ss(\U(p)\times \U(\ell-p))$          &  $=1$  &  $\F_{4}/ \U(3)\times \SU(2)$                               &  $=3$   \\    
$\SO(2\ell+1)/\SO(2)\times \SO(2\ell-1)$       &  $=1$  &  $\E_{6}/\U(2)\times \SU(3)\times \SU(3)$                    &  $=3$    \\
$\Sp(\ell)/\U(\ell)$                          &  $=1$  &  $\E_{7}/\U(3)\times \SU(5)$                                &  $=3$    \\ 
$\SO(2\ell)/\SO(2)\times \SO(2\ell-2)$         &  $=1$  &  $\E_{7}/\SU(2)\times \SU(6)\times \U(1)$                    &  $=3$    \\
$\SO(2\ell)/\U(\ell)$                         &  $=1$  &  $\E_{8}/\E_6\times \SU(2)\times \U(1)$                      &  $=3$   \\ 
$\E_{6}/\U(1)\times \SO(10)$                   &  $=1$  &  $\E_{8}/\U(8) $                                           &  $=3$   \\  
$\E_7/\U(1)\times \E_6$                        &  $=1$  & $\G_{2}/\U(2) \ \ (\U(2) \  \mbox{represented by the long root})$         &  $=3$  \\
  
$\underline{\bf{(B) \ \fr{m}=\fr{m}_1\oplus\fr{m}_2}} $  \ (\cite{DKer}, \cite{Chry2})   &  & $\underline{\bf{(D)  \  \fr{m}=\fr{m}_1\oplus\fr{m}_2\oplus\fr{m}_3\oplus\fr{m}_{4}}}$ \ (\cite{Chry3}) & \\
$\SO(2\ell +1)/\U(\ell-m)\times \SO(2m+1) \ \ (\ell-m\neq 1)$                 &  $=2$  &  $ \F_{4}/\SU(3)\times \SU(2)\times \U(1)$                                        &  $=3$       \\
$\Sp(\ell)/\U(\ell-m)\times \Sp(m) \ \ (m\neq 0)$                                     &  $=2$    & $\E_{7}/\SU(4)\times \SU(3)\times \SU(2)\times \U(1)$                                 &  $=3$       \\
$\SO(2\ell)/\U(\ell-m)\times \SO(2m) \ \ (\ell-m\neq 1, \ m\neq 0)$                &  $=2$    & $\E_{8}/\SU(7)\times \SU(2)\times \U(1)$                                              &  $=3$     \\  
$\G_{2}/\U(2) \ \  (\U(2)  \ \mbox{represented by the short root})$                  &  $=2$     & $\E_{8}/\SO(10)\times \SU(3)\times \U(1)$                                             &  $=5$   \\ 
$\F_{4}/\SO(7)\times \U(1)$                                                          &  $=2$    & $\underline{\bf{(E)  \  \fr{m}=\fr{m}_1\oplus\fr{m}_2\oplus\fr{m}_3\oplus\fr{m}_4\oplus\fr{m}_{5}}}$ &\\
$\F_{4}/\Sp(3)\times \U(1)$                                                           &  $=2$   & $\E_8/\SU(5)\times \SU(4)\times \U(1)$      & $=6$ {\textbf{(new)}} \\
$\E_{6}/\SU(6)\times \U(1)$                                                           &  $=2$   & $\underline{\bf{(F) \  \fr{m}=\fr{m}_1\oplus\fr{m}_2\oplus\fr{m}_3\oplus\fr{m}_4\oplus\fr{m}_{5}\oplus\fr{m}_{6}}}$ &   \\
$\E_{6}/\SU(2)\times \SU(5)\times \U(1)$                                              &  $=2$    & $\E_8/\SU(5)\times \SU(3)\times \SU(2)\times \U(1)$   & $=5$ {\textbf{(new)}} \\ 
$\E_{7}/\SU(7)\times \U(1)$                                                           &  $=2$      \\
$\E_{7}/ \SU(2)\times \SO(10)\times \U(1)$                                          &  $=2$    \\
$\E_{7}/ \SO(12)\times \U(1)$                                                         &  $=2$      \\
$\E_{8}/\E_{7}\times \U(1)$                                                           &  $=2$       \\
$\E_{8}/\SO(14)\times \U(1)$                                                          &  $=2$       \\
  \hline
      \end{tabular}}
 \end{center}

 \smallskip
  As one can see in Table 1,   homogeneous Einstein metrics of the first four classes (A)-(D) have been completely classified in \cite{Sakane}, \cite{DKer}, \cite{Chry2},  \cite{Kim} and \cite{Chry3} (see also the recent work \cite{Stauros}, where invariant Einstein metrics were studied under the more general context of Ricci flow). In particular, only   the  cases (E) and (F) have not been examined yet.  In this article we focus on these two flag manifolds and by applying a method based on the Riemannian submersions  we construct the homogeneous Einstein equation.  Moreover  for both cases
   we manage to classify all (non-isometric)  homogeneous Einstein metrics.  Our main results are stated as follows:
  
   \smallskip
     { \bf{Theorem A.}}
    {\it     The generalized flag manifold  $M=G/H=\E_8/\U(1)\times \SU(4)\times \SU(5)$   admits (up to an  isometry and a scale ) precisely five  non-K\"ahler  $E_8$-invariant Einstein metrics (see Theorem \ref{ThmA}).}
    
   \medskip
    { \bf{Theorem B.}}
    {\it     The generalized flag manifold  $M=G/H=\E_8/\U(1)\times \SU(2)\times \SU(3)\times \SU(5)$   admits (up to an  isometry and a scale) precisely four    non-K\"ahler  $E_8$-invariant Einstein metrics (see Theorem \ref{ThmB}). }    \smallskip

\smallskip
Notice  that  the construction as well as the determination of all real positive solutions of the homogeneous Einstein equation on $\E_8/\U(1)\times \SU(2)\times \SU(3)\times \SU(5)$, is much more complicated than   case (E).  For example here we find 9 non-zero structure constants with respect to the  decomposition 
$\fr{m}=\fr{m}_{1}\oplus\fr{m}_{2}\oplus\fr{m}_{3}\oplus\fr{m}_{4}\oplus\fr{m}_{5}\oplus\fr{m}_{6}$ (see also  \cite{Chry}).   
 In order to determine them explicitly we  use the method of   Riemannian submersions   as well as the method based on  the  twistor fibration  of $G/H$ over the symmetric space $\E_8/\E_7\times \SU(2)$, a method which was initially presented in the first author Phd's thesis  (\cite{Chry3}).  
For this space  the system of  algebraic equations which give the homogeneous Einstein equation  consists of five non-linear polynomial equations and it seems that it is difficult to  compute a Gr\"obner basis. However we are able  to obtain  all positive real solutions  approximately by using  the software package HOM4PS-2.0, which implements the polyhedral homotopy continuation method for solving  polynomial systems of equations with several variables (\cite{homp}).  
 
    From previous results of Einstein metrics on flag manifolds $G/H$ with $b_{2}(G/H)=1$ and Theorems A and B, we conclude that
  
  \medskip
    { \bf{Main Theorem.}}
    {\it Let $G$ a compact connected simple Lie group and let  $M=G/H$ be a generalized flag manifold with first Betti number  $b_{2}(G/H)=1$, which is not an irreducible  Hermitian symmetric space of compact type.   Then $M$ admits a finite number of non-isometric $G$-invariant Einstein metrics which are not K\"ahler. }
    
    \smallskip
 It is  worth to mention that the results of this work  support 
 the finiteness conjecture of  invariant Einstein metrics on reductive homogeneous spaces $G/H$ with simple spectrum  (cf. \cite{BWZ}).
  
 The paper is organized as follows: We describe   the Ricci tensor on a  reductive homogeneous space in \textsection 1  and   Riemannian submersions of homogeneous spaces in \textsection 2, and in \textsection 3 we discuss the algebraic setting of flag manifolds.
  In \textsection 4   we  treat the space $M=G/H=\E_8/\U(1)\times \SU(4)\times \SU(5)$, we write down explicitly the homogeneous Einstein equation and we prove Theorem A.  For the second flag manifold $M=G/H=\E_8/\U(1)\times \SU(2)\times \SU(3)\times \SU(5)$ and Theorem B, this will be done is \textsection 5.

 \section*{Acknowlegments}  
 The authors  wish  to thank  Dr. Stauro Anastassiou for bringing to their attention the software package  HOM4PS-2.0, and for valuable remarks on the solutions of the homogeneous Einstein equation.

 \markboth{Ioannis Chrysikos and Yusuke Sakane}{The classification of homogeneous Einstein metrics on  flag manifolds with $b_2(M)=1$}
\section{The Ricci tensor of a $G$-invariant metric}

Let $G$ be a compact connected semi-simple Lie group with Lie algebra $\fr{g}$, and let $H$ be a closed subgroup of $G$ with Lie algebra $\fr{h}\subset\fr{g}$.    We denote by $B$ the negative of the Killing form of $\frak
g$. Then $B$ is an $\mbox{Ad}(G)$-invariant inner product
on $\frak g$. Let ${\frak m}$ be an $\Ad(H)$-invariant orthogonal complement of ${\frak h}$ with respect to $B$, that means ${\frak g} = {\frak h} \oplus {\frak m}$ and  $\Ad(H){\frak m} \subset {\frak m}$. As usual we identify $\fr{m}=T_{o}G/H$, where $o=eH\in G/H$.  
 We assume   that $\fr{m}=T_{o}G/H$ admits a decomposition
${\frak m} = {\frak m}_1 \oplus \cdots \oplus {\frak m}_q$ into $q$ irreducible $\mbox{Ad}(H)$-modules 
${\frak m}_j$ $( j = 1, \cdots, q )$, which are    mutually non-equivalent. 

Let us  consider 
the $G$-invariant Riemannian metric on $G/H$ given by 
\begin{equation}\label{eq21}
 ( \,\, , \,\, )  =  
x_1\cdot B|_{\mbox{\footnotesize$ \frak m$}_1} + \cdots + 
 x_q\cdot B|_{\mbox{\footnotesize$ \frak m$}_q}, \quad x_1, \cdots, x_q  \in {\mathbb R}_+.
\end{equation}
Because $\fr{m}_{i}\ncong\fr{m}_{j}$ for any $i\neq j$, any $G$-invariant metric on $G/H$ is given by (\ref{eq21}).
Note also that 
 the space of $G$-invariant symmetric covariant
 2-tensors on $G/H$ is given by 
\begin{equation} \{ z_{1} \cdot B|_{{\mbox{\footnotesize$ \frak m$}}_{1}} + \cdots + 
 z_{q} \cdot B|_{{\mbox{\footnotesize$ \frak m$}}_{q}} \,\,\vert \,\, \
z_{1},\cdots, z_{q} \in {\mathbb R}\}. 
\label{rr2}
\end{equation}
In particular,  the Ricci tensor ${r}$ 
of a $G$-invariant Riemannian metric on $G/H$ is a $G$-invariant
symmetric covariant 2-tensor on $G/H$ and thus ${\bar r}$ is 
of the form (\ref{rr2}). Let $\lbrace e_{\alpha} \rbrace$ be a $B$-orthonormal basis 
adapted to the decomposition of $\frak g$, i.e., 
$e_{\alpha} \in {\frak m}_i$ for some $i$, and
$\alpha < \beta$ if $i<j$ (with $e_{\alpha} \in {\frak m}_i$ and
$e_{\beta} \in {\frak m}_j)$.
 We set ${A^\gamma_{\alpha
\beta}}=B(\left[e_{\alpha},e_{\beta}\right],e_{\gamma})$ so that
$\left[e_{\alpha},e_{\beta}\right] 
= {\sum_{\gamma}
A^\gamma_{\alpha \beta} e_{\gamma}}$, and set 
$c_{ij}^{k}=\displaystyle{k \brack {ij}}=\sum (A^\gamma_{\alpha \beta})^2$, where the sum is
taken over all indices $\alpha, \beta, \gamma$ with $e_\alpha \in
{\frak m}_i,\ e_\beta \in {\frak m}_j,\ e_\gamma \in {\frak m}_k$. 
Then $c_{ij}^{k}$ is independent of the 
$B$-orthonormal bases chosen for ${\frak m}_i, {\frak m}_j, {\frak m}_k$,
and symmetric in all three indices, i.e. $c_{ij}^{k}=c_{ji}^{k}=c_{ki}^{j}$ (see \cite{Wa2}).   


\begin{theorem}\textnormal{(\cite{PS})}\label{ric1}
Let $ d_k= \dim{\frak m}_{k}$.  The components $r_1, \cdots, r_{q}$ 
of the Ricci tensor $r$ of the metric $g$ of the
form {\em (\ref{eq21})} on $G/H$  are given by 
\begin{equation}
r_k = \frac{1}{2x_k}+\frac{1}{4d_k}\sum_{j,i}
\frac{x_k}{x_j x_i} {k \brack {ji}}
-\frac{1}{2d_k}\sum_{j,i}\frac{x_j}{x_k x_i} {j \brack {ki}}
 \quad (k=  1,\ \cdots,\ q),   \label{eq5}
\end{equation}
where the sum is taken over all $i, j =  1,\cdots, q$. 
\end{theorem}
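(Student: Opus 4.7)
The plan is to derive formula (\ref{eq5}) by computing the Ricci tensor $r$ directly at the origin $o=eH$ in the adapted $B$-orthonormal basis $\{e_\alpha\}$, and then to recognize that the various sums of squares of the structure constants $A^\gamma_{\alpha\beta}$ which appear are precisely the quantities $c^k_{ij}$. Since the $\fr{m}_k$ are pairwise inequivalent irreducible $\Ad(H)$-modules, Schur's lemma forces $r|_{\fr{m}_k}$ to be a scalar multiple of $(\,,\,)|_{\fr{m}_k}$, so a single real number $r_k$ is well-defined and it suffices to compute $r(X,X)$ for one $X\in\fr{m}_k$.

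Concretely, I would proceed in three stages. First, write down the Levi--Civita connection of $(\,,\,)$ at $o$: reductivity gives the standard expression $(\nabla_XY)_o=\tfrac12[X,Y]_{\fr{m}}+U(X,Y)$ for $X,Y\in\fr{m}$, where the symmetric map $U\colon\fr{m}\times\fr{m}\to\fr{m}$ is characterized by $2(U(X,Y),Z)=([Z,X]_{\fr{m}},Y)+(X,[Z,Y]_{\fr{m}})$. Using $(\,,\,)=x_iB$ on $\fr{m}_i$, a direct computation in the adapted basis expresses $(U(e_\alpha,e_\beta),e_\gamma)$ as an explicit linear combination of the $A^\gamma_{\alpha\beta}$ with coefficients rational in $x_i,x_j,x_k$ (for $e_\alpha\in\fr{m}_i$, $e_\beta\in\fr{m}_j$, $e_\gamma\in\fr{m}_k$). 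Second, substitute into the standard reductive curvature formula for $R(X,Y)Z$ at $o$, which involves $[U(X,Y),Z]_{\fr{m}}$, compositions of $U$ with $[\,,\,]_{\fr{m}}$, a quadratic $U(U(\cdot,\cdot),\cdot)$-piece, and the $\fr{h}$-contribution $-[[X,Y]_{\fr{h}},Z]$. Third, take the trace $r(X,X)=\sum_\alpha(R(X,e_\alpha)e_\alpha,X)/(e_\alpha,e_\alpha)$ for $X\in\fr{m}_k$, expand in the adapted basis, and group the resulting triple sums $\sum(A^\gamma_{\alpha\beta})^2$ according to which blocks $\fr{m}_i,\fr{m}_j,\fr{m}_k$ contain the indices. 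These sums assemble precisely into the constants $c^k_{ij}$, and the full $S_3$-symmetry $c_{ij}^k=c_{ji}^k=c_{ki}^j$, a consequence of the $\ad$-invariance of $B$ (equivalently $A^\gamma_{\alpha\beta}=-A^\beta_{\alpha\gamma}$), consolidates the contributions into the three summands of (\ref{eq5}).

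The main obstacle is precisely the combinatorial bookkeeping of stage three. The leading $\tfrac{1}{2x_k}$ term is the normal-metric contribution arising from the $\tfrac12[X,Y]_{\fr{m}}$-piece of $\nabla$ together with the $-[[X,Y]_{\fr{h}},Z]$ piece; the $\tfrac{1}{4d_k}$-sum collects the $[U,\cdot]$-type contributions; and the $-\tfrac{1}{2d_k}$-sum arises from the asymmetric cross contributions of $U$. Tracking which factors of $x_i$ come from renormalizing $B$-orthonormal vectors to $(\,,\,)$-orthonormal ones, which come from the explicit form of $U$, and verifying that all cross-terms collapse correctly under the $c^k_{ij}$-symmetries so as to produce exactly the stated rational coefficients $\tfrac{1}{2x_k},\tfrac{1}{4d_k},\tfrac{1}{2d_k}$ with the correct signs and ratios of $x$'s, is the delicate and error-prone part of the argument.
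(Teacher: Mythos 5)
The paper does not prove this theorem at all: it is quoted from Park--Sakane \cite{PS}, so there is no internal proof to compare against. Your strategy --- compute $\nabla$ at $o$ via the symmetric tensor $U$, assemble the curvature, trace, and sort the squares $(A^\gamma_{\alpha\beta})^2$ into the symbols ${k\brack ij}$ using their full symmetry --- is the standard derivation and would succeed; the appeal to Schur's lemma to reduce $r|_{\fr{m}_k}$ to a single scalar $r_k$ is also correct, since the $\fr{m}_k$ are irreducible and pairwise inequivalent.

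The weakness is that, as written, this is a plan rather than a proof: the entire content of the theorem is the identification of the three coefficients $\tfrac{1}{2x_k}$, $\tfrac{1}{4d_k}\tfrac{x_k}{x_ix_j}$, $-\tfrac{1}{2d_k}\tfrac{x_j}{x_kx_i}$, and that is exactly the step you defer as ``delicate and error-prone.'' You can close the gap, and avoid re-deriving the curvature tensor from $U$ altogether, by starting from the closed-form Ricci formula for a compact (hence unimodular) homogeneous space (\cite[7.38]{Be}): for a $g$-unit vector $X\in\fr{m}_k$ and a $g$-orthonormal basis $\{X_i\}$ of $\fr{m}$,
\begin{equation*}
\Ric(X,X)=-\tfrac12\sum_i\bigl|[X,X_i]_{\fr{m}}\bigr|_g^2+\tfrac12 B(X,X)+\tfrac14\sum_{i,j}g\bigl([X_i,X_j]_{\fr{m}},X\bigr)^2,
\end{equation*}
the mean-curvature term vanishing by unimodularity. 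Taking $X=e_\alpha/\sqrt{x_k}$ and $X_i=e_\beta/\sqrt{x_j}$ for the adapted $B$-orthonormal basis, the middle term gives $\tfrac{1}{2x_k}$ directly (this is the $-\tfrac12\,\mathrm{Killing}$ term, not a $\tfrac12[X,Y]_{\fr{m}}$-plus-$\fr{h}$ contribution as you describe it), the last term gives $\tfrac{1}{4d_k}\sum\tfrac{x_k}{x_ix_j}{k\brack ij}$ after averaging over $\alpha\in\fr{m}_k$, and the first gives $-\tfrac{1}{2d_k}\sum\tfrac{x_j}{x_kx_i}{j\brack ki}$ after using $c^k_{ij}=c^j_{ki}$. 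That three-line bookkeeping is the whole proof; without it (or your longer curvature computation actually carried out), the argument is not yet complete.
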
 


 \markboth{Ioannis Chrysikos and Yusuke Sakane}{The classification of homogeneous Einstein metrics on  flag manifolds with $b_2(M)=1$}
\section{Riemannian submersions}
 Let $G$ be a compact semi-simple Lie group and $H$, $K$ two closed subgroups of $G$ with $H \subset K$. 
 Then there is a natural fibration $\pi  :  G/H \to G/K $ with fiber $K/H$.     
     Let $\frak p$ be the orthogonal complement of $\frak k$ in
$\frak g$ with respect to $B $,  and  $\frak q$ be the orthogonal complement of $\frak h$ in $\frak k$. Then we have $\frak g$ = $\frak k \oplus \frak p = \frak h  \oplus \frak q  \oplus \frak p$. An $\mbox{Ad}_G (K)$-invariant scalar product on $\frak p$ defines a $G$-invariant metric $\check{g}$ on $G/K$, and an $\mbox{Ad}_K (H)$-invariant scalar product on $\frak q$ defines an $K$-invariant metric $\hat{g}$ on $K/H$. The orthogonal direct sum for these scalar products on $\frak m = \frak q  \oplus \frak p$ defines a $G$-invariant metric $ g$ on $G/H$, called {\it submersion metric}. 

\begin{prop}\textnormal{(\cite[p. 257]{Be})}
The map $\pi$ is a Riemannian submersion from $( G/H, \,  g )$ to $( G/K \, \check{g} )$ with totally geodesic fibers isometric to $( K/H, \, \hat{g} )$. 
\end{prop}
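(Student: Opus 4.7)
\bigskip

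\noindent\textbf{Proof proposal.} The plan is to verify the three assertions — Riemannian submersion, isometric fibers, totally geodesic fibers — in turn, using $G$-equivariance of $\pi$ and of both metrics to reduce everything to the base point $o=eH\in G/H$.

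First, at $o$ identify $T_o(G/H)=\fr{m}=\fr{q}\oplus\fr{p}$, with $\fr{q}$ tangent to the fiber $K/H$ (it lies in $\ker d\pi_o$ because $\fr{q}\subset\fr{k}$) and $\fr{p}$ mapped bijectively by $d\pi_o$ onto $T_{eK}(G/K)\cong\fr{p}$. By the very definition of the submersion metric $g$ as the $B$-orthogonal sum of the chosen scalar product on $\fr{q}$ (which defines $\hat{g}$) and the one on $\fr{p}$ (which defines $\check{g}$), the restriction $d\pi_o|_{\fr{p}}$ is a linear isometry onto the horizontal directions on $G/K$. Since $\pi$ intertwines the isometric $G$-actions on both sides, the Riemannian submersion property at $o$ propagates to every point.

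Second, the fiber $\pi^{-1}(eK)=\{kH:k\in K\}$ is naturally identified with $K/H$, and the restriction of $g$ to its tangent space $\fr{q}$ is, by construction, precisely the $\Ad_K(H)$-invariant scalar product defining $\hat{g}$. All other fibers are $G$-translates and inherit isometric copies of $(K/H,\hat{g})$.

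For the totally geodesic assertion I would compute the second fundamental form of $K/H$ at $o$ using the standard Levi-Civita formula on a reductive homogeneous space: for $X,Y\in\fr{m}$ extended as $G$-invariant fields,
\[
(\nabla_X Y)_o = -\tfrac{1}{2}[X,Y]_{\fr{m}} + U(X,Y),
\]
where $U$ is the symmetric bilinear map determined by $2\langle U(X,Y),Z\rangle=\langle[Z,X]_{\fr{m}},Y\rangle+\langle X,[Z,Y]_{\fr{m}}\rangle$ for $Z\in\fr{m}$. Take $X,Y\in\fr{q}$. Since $\fr{k}$ is a subalgebra, $[X,Y]\in\fr{k}$, hence $[X,Y]_{\fr{m}}\in\fr{k}\cap\fr{m}=\fr{q}$ is vertical. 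For the horizontal component of $U(X,Y)$, test against an arbitrary $Z\in\fr{p}$: because $B$ is $\Ad(G)$-invariant and $\fr{k}$ is $\Ad(K)$-invariant, its $B$-complement $\fr{p}$ is $\Ad(K)$-invariant, so $[\fr{q},\fr{p}]\subset\fr{p}$; therefore $[Z,X]_{\fr{m}}$ and $[Z,Y]_{\fr{m}}$ both lie in $\fr{p}$ and are $B$-orthogonal to $Y,X\in\fr{q}$, which gives $\langle U(X,Y),Z\rangle=0$. Thus $(\nabla_X Y)_o$ has no $\fr{p}$-component, the second fundamental form vanishes at $o$, and $G$-equivariance spreads this across $K/H$.

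Nothing in this plan is delicate once the reductive connection formula is taken as known; the one algebraic fact doing all the work is the inclusion $[\fr{q},\fr{p}]\subset\fr{p}$. So the main (rather mild) obstacle is simply bookkeeping — keeping track of which of $\fr{h},\fr{q},\fr{p}$ is stable under which of $\Ad(H),\Ad(K),\Ad(G)$ and exploiting $B$-orthogonality consistently throughout.
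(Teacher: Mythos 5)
Your argument is correct, and it is worth noting at the outset that the paper offers no proof of this proposition at all: it is simply quoted from Besse \cite[p.~257]{Be} (it is essentially 9.80 there). Your verification follows the standard route and all three steps check out: the identification of $\fr{q}=\ker d\pi_o$ with the tangent space of the fiber, the fact that $d\pi_o|_{\fr{p}}$ is a linear isometry because $g$ and $\check{g}$ are built from the \emph{same} scalar product on $\fr{p}$, the $K$-invariance argument identifying the induced metric on the fiber with $\hat{g}$, and the second–fundamental-form computation, whose only real input is $[\fr{q},\fr{p}]\subset\fr{p}$ (valid since $\fr{p}$ is the $B$-orthogonal complement of the $\Ad(K)$-invariant subalgebra $\fr{k}$) together with $[\fr{q},\fr{q}]\subset\fr{k}$, forcing both $[X,Y]_{\fr{m}}$ and $U(X,Y)$ into $\fr{q}$. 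One small correction of wording: the fields you extend $X,Y\in\fr{q}$ to are not ``$G$-invariant fields'' (such fields correspond to $\Ad(H)$-fixed vectors of $\fr{m}$ and generically do not exist) but the fundamental (Killing) vector fields $X^{*},Y^{*}$ of the $G$-action; the formula $(\nabla_{X^{*}}Y^{*})_{o}=-\tfrac{1}{2}[X,Y]_{\fr{m}}+U(X,Y)$ you quote is the correct one for these, and since $Y\in\fr{q}\subset\fr{k}$ implies $Y^{*}$ is tangent to the fiber $K\cdot o$ along that fiber, it is a legitimate extension for computing the second fundamental form. With that terminological fix the proof is complete and self-contained.
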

Note that $\frak q$ is the vertical subspace of the submersion and $\frak p$ is the horizontal subspace. 
      For a Riemannian submersion, O'Neill \cite{O'Neill} has introduced two tensors $ A$ and $T$. 
   Since in our case the fibers are totally geodesic it is  $T =0$. We also have that 
   $A_X Y = \frac{1}{2} [X, \ Y ]_{\frak q}$ for any  $ X, Y \in {\frak p}$.
  Let now $\{X_i\}$ be an orthonormal basis of $\frak p$ and  $\{U_j\}$  be an orthonormal basis of $\frak q$. For $X, Y \in  \frak p$ we put $\displaystyle  g(A_X, \ A_Y) =  \sum_{i} g( A_X X_i, A_Y X_i ). $ 
  Then we have  that
   \begin{eqnarray}\label{submeq1}
    g(A_X, \ A_Y) =  \frac{1}{4}\sum_{i} \hat{g}( [X, \ X_i ]_{\frak q}, \  [ Y, \ X_i ]_{\frak q}).
   \end{eqnarray}

 Let $r$, $\check{r}$ be the Ricci tensors of the metrics $g$, $\check{g}$ respectively. Then it is easy to see that
 (\cite[p. 244]{Be}) 
   \begin{eqnarray} \label{submeq2}
r(X, Y) = \check{r}(X, Y) - 2 g(A_X, A_Y) \quad \mbox{  for }  \ X, Y \in {\frak p}. 
     \end{eqnarray} 
     We remark that there is a corresponding expression $r (U,V)$ for vertical vectors, but it does not contribute
   additional information in our approach.                
   
Let  now  ${\frak p} = {\frak p}_1 \oplus \cdots \oplus {\frak p}_{\ell}$ be a decomposition of $ {\frak p} $ into irreducible $\mbox{Ad}(K)$-modules and let  ${\frak q} = {\frak q}_1 \oplus \cdots \oplus {\frak q}_{s}$ be   a decomposition of $ {\frak q} $ into irreducible $\mbox{Ad}(H)$-modules.  Assume    that  the $\mbox{Ad}(K)$-modules ${\frak p}_j$ $( j = 1, \cdots, \ell )$  are   mutually non equivalent.  
 Note that each irreducible component $ {\frak p}_{j}$ as $\mbox{Ad}(H)$-module can be decomposed into irreducible $\mbox{Ad}(H)$-modules.  
  To compute  the values $\displaystyle {k \brack {ij}}$ for $G/H$, we use information from the Riemannian submersion
     $\pi : ( G/H, \,  g )  \to  ( G/K, \, \check{g} )$ with totally geodesic fibers isometric to $( K/H, \, \hat{g} )$. 
 We consider a $G$-invariant metric on $G/H$ defined by a  Riemannian submersion  $\pi : ( G/H, \,  g )  \to  ( G/K, \, \check{g} )$ given by 
\begin{eqnarray}
g  =  
y_1   B|_{\mbox{\footnotesize$ \frak p$}_1} + \cdots + 
 y_{\ell}   B|_{\mbox{\footnotesize$ \frak p$}_{\ell}} + z_1 B|_{\mbox{\footnotesize$ \frak q$}_1} + \cdots + 
 z_{s}   B|_{\mbox{\footnotesize$ \frak q$}_{s}} \label{eq4}
 \end{eqnarray}
for positive real numbers $ y_1, \cdots, y_{\ell}, z_1, \cdots, z_s$. 
     Then we decompose each irreducible component ${\frak p}_{j}$  into irreducible $\mbox{Ad}(H)$-modules
 $$ {\frak p}_{j} = {\frak m}_{j, 1} \oplus \cdots \oplus {\frak m}_{j,  \, k_j}, $$  where the 
 $\mbox{Ad}(H)$-modules ${\frak m}_{j,  t} $ $( j= 1, \cdots, \ell,\  t = 1, \cdots, k_j )$  are  mutually non equivalent and are chosen
  from the irreducible decomposition ${\frak m} = {\frak m}_1 \oplus \cdots \oplus {\frak m}_q$ of $\mbox{Ad}(H)$-modules. 
Thus the submersion metric (\ref{eq4}) can be written as 
\begin{eqnarray}
g  =  y_1 \sum_{t = 1}^{k_1}  B|_{\mbox{\footnotesize$ \frak m$}_{1, t}} + \cdots + 
 y_{\ell}   \sum_{t = 1}^{k_{\ell}} B|_{\mbox{\footnotesize$ \frak m$}_{\ell, t}} + z_1 B|_{\mbox{\footnotesize$ \frak q$}_1} + \cdots +  z_{s}   B|_{\mbox{\footnotesize$ \frak q$}_{s}}. \label{metric_submersion}
 \end{eqnarray}
 Note that the metric $\check{g}$ on $G/K$ is  given by 
 \begin{eqnarray}
\check{g} =  
y_1   B|_{\mbox{\footnotesize$ \frak p$}_1} + \cdots + 
 y_{\ell}   B|_{\mbox{\footnotesize$ \frak p$}_{\ell}}  \label{eqbase}
 \end{eqnarray}
  and the metric $ \hat{g} $ on $ K/H$  are 
 \begin{eqnarray}
\hat{g}  =  
 z_1 B|_{\mbox{\footnotesize$ \frak q$}_1} + \cdots + 
 z_{s}   B|_{\mbox{\footnotesize$ \frak q$}_{s}}.  \label{eqfiber}
 \end{eqnarray} 

 
 \begin{lemma}\textnormal{(\cite{ACS})}\label{submersion_ricci} 
 Let $ d_{j, t} = \dim {\frak m}_{j, t}$. 
The components $ r_{(j, \, t)}$  $( j= 1, \cdots, \ell,\  t = 1, \cdots, k_j )$
of the Ricci tensor ${r}$ for the metric {\em (\ref{metric_submersion})} on $G/H$ are given by 
\begin{equation}
 r_{(j, \, t)} = \check{r}_{j} -  \frac{1}{2 d_{j,\,  t} }\sum_{i=1}^s \sum_{j',\,  t'} \frac{z_i}{y_{j} y_{j'}} {i \brack {(j, t) \  (j', t')}},  
\end{equation} 
where $\check{r}_{j}$ are the components of Ricci tensor $\check{r}$ for the metric $\check{g}$ on $G/K$. 

\end{lemma}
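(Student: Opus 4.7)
The strategy is to apply the O'Neill submersion identity (\ref{submeq2}) with $X=Y$ a $g$-unit horizontal vector in $\fr{m}_{j,t}$, and then evaluate the correction $g(A_X, A_X)$ explicitly in terms of the structure constants. Since $\pi_*$ restricts to a linear isometry from the horizontal subspace of $T_o(G/H)$ onto $T_{\pi(o)}(G/K)$, a $g$-unit vector $X \in \fr{m}_{j,t} \subset \fr{p}_j$ is carried to a $\check g$-unit vector in the base. Under the eigenvalue convention adopted in Theorem \ref{ric1} (which is such that $r(X,X)=r_{(j,t)}\,g(X,X)$ on $\fr{m}_{j,t}$ and $\check r(X,X)=\check r_j\,\check g(X,X)$ on $\fr{p}_j$), identity (\ref{submeq2}) specialises to
\[
r_{(j,t)} \;=\; \check r_j \;-\; 2\, g(A_X,A_X),
\]
so the problem reduces to computing $g(A_X,A_X)$.

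To do so, fix a $B$-orthonormal basis $\{\tilde e_\alpha\}$ of $\fr{p}$ adapted to the fine decomposition $\fr{p}=\bigoplus_{j',t'}\fr{m}_{j',t'}$, and set $E_\alpha=\tilde e_\alpha/\sqrt{y_{j'(\alpha)}}$ to obtain a $g$-orthonormal basis of $\fr{p}$. Writing $X=e/\sqrt{y_j}$ for a $B$-unit $e\in\fr{m}_{j,t}$, bilinearity of the bracket together with $A_X Y=\tfrac{1}{2}[X,Y]_{\fr{q}}$ gives
\[
g(A_X E_\alpha,\,A_X E_\alpha) \;=\; \frac{1}{4\,y_j\,y_{j'(\alpha)}}\, g\bigl([e,\tilde e_\alpha]_{\fr{q}},\,[e,\tilde e_\alpha]_{\fr{q}}\bigr).
\]
Expanding $[e,\tilde e_\alpha]_{\fr{q}}$ in a $B$-orthonormal basis $\{f_\beta\}$ of $\fr{q}=\bigoplus_i \fr{q}_i$ and using $g|_{\fr{q}_i}=z_i B|_{\fr{q}_i}$, the inner factor equals $\sum_{i=1}^{s} z_i\sum_{f_\beta\in\fr{q}_i} B([e,\tilde e_\alpha],f_\beta)^2$. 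Summing over $\alpha$ and, by $\Ad(H)$-invariance, averaging over $e$ through a $B$-orthonormal basis of $\fr{m}_{j,t}$ (dividing by $d_{j,t}$), the triple sum $\sum_{e,\,\tilde e_\alpha\in\fr{m}_{j',t'},\,f_\beta\in\fr{q}_i} B([e,\tilde e_\alpha],f_\beta)^2$ is precisely the structure constant $\displaystyle{i \brack (j,t)\,(j',t')}$. Combining yields
\[
g(A_X,A_X) \;=\; \frac{1}{4\,d_{j,t}}\sum_{i=1}^{s}\sum_{j',t'}\frac{z_i}{y_j\,y_{j'}}{i \brack (j,t)\,(j',t')},
\]
and substituting into the reduced form of (\ref{submeq2}) above produces the claimed identity.

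The main subtlety is the normalization bookkeeping responsible for the $1/(y_j y_{j'})$ weight in the final formula: one power of $1/\sqrt{y_j}$ appears because $X$ must be taken $g$-unit in order to read off $r_{(j,t)}$ directly from $r(X,X)$ in the eigenvalue convention of (\ref{eq5}), while a second independent power of $1/\sqrt{y_{j'(\alpha)}}$ appears for each basis vector when passing from the natural $B$-orthonormal basis of $\fr{p}$ to a $g$-orthonormal one. Squaring these two factors in the $g$-norm of $[X,E_\alpha]_{\fr{q}}$ produces the product $y_j y_{j'}$ in the denominator, which is the only non-obvious feature of the final expression; once this is set up, the computation is a direct unwinding of the definition of ${i \brack (j,t)(j',t')}$.
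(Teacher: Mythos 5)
Your proof is correct and follows exactly the route the paper intends: the lemma is quoted from [ACS], but the two O'Neill identities (\ref{submeq1}) and (\ref{submeq2}) are set up immediately beforehand precisely so that one specializes them to a $g$-unit vector in $\fr{m}_{j,t}$, rescales a $B$-orthonormal adapted basis of $\fr{p}$ to a $g$-orthonormal one (producing the $1/(y_j y_{j'})$ weights), and identifies the resulting triple sum with ${i \brack (j,t)\,(j',t')}$ after averaging over $\fr{m}_{j,t}$ by $\Ad(H)$-invariance. The normalization bookkeeping and the factor $\tfrac{1}{2d_{j,t}}$ both come out right, so nothing further is needed.
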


Notice that when metric (\ref{eq4}) is viewed as a metric (\ref{eq21}) then the horizontal part of  $r_{(j, \, t)}$ equals to $\check{r}_{j}$ ($j=1,\dots ,\ell$), in particular,  it is independent of $t$.

\markboth{Ioannis Chrysikos and Yusuke Sakane}{On the classification of homogeneous Einstein metrics on  flag manifolds with $b_2(M)=1$}
\section{Decomposition associated to generalized flag manifolds}\label{Lie}
\markboth{Ioannis Chrysikos and Yusuke Sakane}{On the classification of homogeneous Einstein metrics on   flag manifolds with $b_2(M)=1$}
In this section we  review briefly  the Lie theoretic description of a flag manifold in terms of painted Dynkin diagrams, and next we recall  some notions from the geometry and the topology of such a space. 

Let $G$ be a compact semi-simple Lie group,  $\frak g$ the Lie
algebra of $G$ and $\frak t$ a maximal abelian subalgebra of
$\frak g$.    We denote by ${\frak g }^{\mathbb C}_{}$  and 
$\fr{t}^{\bb{C}}$   the complexification of $\frak g$  and 
$\frak t$,
respectively. Then $\fr{t}^{\bb{C}}$ is a Cartan subalgebra of $\fr{g}^{\bb{C}}$.
We assume that $\dim_{\bb{C}}\fr{t}^{\bb{C}}=\ell=\rnk\fr{g}^{\bb{C}}$.   
 We identify an element of the root system $\Delta$ of
 ${\frak g }^{\mathbb C}_{}$ relative to  
 ${\frak t}^{\mathbb C}_{}$ with an element of $\sqrt{-1}\frak t$, by the
duality defined by the Killing form of ${\frak
g}^{\mathbb C}_{}$. Consider the root space decomposition of  ${\frak g }^{\mathbb C}_{}$
relative to  ${\frak t}^{\mathbb C}_{}$, i.e.,   
${\frak g }^{\mathbb C}_{}  =  {\frak t}^{\mathbb C}_{} \oplus
\bigoplus_{\alpha \in \Delta} {\frak g }^{\mathbb C}_{\alpha}$.
Let $\Pi$ = $\{\alpha^{}_1, \cdots, \alpha^{}_\ell\}$
be a fundamental system of $\Delta$ and $\{\Lambda^{}_1, \cdots,
\Lambda^{}_\ell\}$ the fundamental weights of ${\frak g }^{\mathbb C}_{}$ 
corresponding to $\Pi$, that is $2(\Lambda^{}_i, \alpha^{}_j)/(\alpha^{}_j, \alpha^{}_j) =
\delta^{}_{ij}$, for any $1 \le i, j \le \ell$. We choose a subset $\Pi^{}_0\subset\Pi$ and we set $\Pi_{M}=\Pi\backslash\Pi^{}_0$ =
$\{\alpha^{}_{i_1}, \cdots, \alpha^{}_{i_r}\}$ $(1 \le
\alpha^{}_{i_1} < \cdots <\alpha^{}_{i_r} \le \ell)$.
 We put  $[\Pi^{}_0] = \Delta\cap\Span_{\bb{Z}}\{\Pi^{}_0\}$ and  $[\Pi_{0}]^{+}=\Delta^{+}\cap\Span_{\bb{Z}}\{\Pi_{0}\}$, where
 $\Span_{\bb{Z}}\{\Pi^{}_0\}$ denotes the subspace of $\sqrt{-1}\frak
t$ generated by $\Pi^{}_0$ with integer coefficients, and $\Delta^+_{}$ is the set of all 
positive roots relative to $\Pi$.  
   Take a Weyl basis $\{E_{\alpha} \in {\frak g}^{\mathbb C}_{\alpha}
: \alpha \in \Delta \}$, and set  $A_{\al}=E_{\al}+E_{-\al}$ and $B_{\al}=\sqrt{-1}(E_{\al}-E_{-\al})$.
Then  the Lie algebra $\fr{g}$, is a real form of $\fr{g}^{\bb{C}}$ which can be identified with the fixed-point set $\fr{g}^{\tau}$ of the complex conjugation $\tau$ in $\fr{g}^{\bb{C}}$, that means $\fr{g}^{\tau}={\frak g} = {\frak t} \oplus \bigoplus_{\alpha \in \Delta^{+}} \{
{\mathbb R}A_{\al} + {\mathbb R} B_{\al}\}$  (see \cite{Chry2}).  Moreover, the   subalgebra $\fr{u}= {\frak t}^{\mathbb C}_{} \oplus
\bigoplus_{\alpha \in [\Pi^{}_0]\cup\Delta^+_{}}
{\frak g }^{\mathbb C}_{\alpha}\subset\fr{g}^{\bb{C}}$  is a   {\it parabolic subalgebra}   of  
$ 
{\frak g }^{\mathbb C}_{}$  since it contains the {\it Borel subalgebra} $\fr{b}=\fr{t}^{\bb{C}}\oplus\bigoplus_{\al\in\Delta^{+}}\fr{g}_{\al}^{\bb{C}}\subset\fr{g}^{\bb{C}}$.
 \par Let $G^{\mathbb C}$ be a simply connected complex semi-simple
Lie group whose Lie algebra is ${\frak g }^{\mathbb C}_{} $ and $U$
the parabolic subgroup of $G^{\mathbb C}$ generated by ${\frak
u}^{}_{}$. Since $U$ is connected,  the complex homogeneous manifold $G^{\mathbb C}/U$
is    simply connected (and compact).  In fact $G$ acts transitively on 
$G^{\mathbb C}/U$ with isotropy group the connected closed subgroup $H = G\cap U\subset G$, thus $G^{\mathbb C}/U$ = $G/H$ as $C^\infty_{}$-manifolds. This identification implies that $G^{\bb{C}}/U$
carries a $G$-invariant K\"ahler metric. Notice that $H = G\cap U$
is the centralizer of a torus $S\subset T$ in $G$, where 
$T$ is the maximal torus generated from the ad-diagonal subalgebra $\fr{t}$.
Thus $\rnk G=\rnk H$.  The homogeneous space $M=G^{\mathbb C}/U$ = $G/H$
is  called {\it generalized flag manifold}, and any generalized 
flag manifold  is constructed in this way.   Let $\frak h$ be the Lie algebra of $H$ and let ${\frak h}^{\mathbb C}$ 
be its complexification.  Due to the inclusion
$\fr{t}^{\bb{C}}\subset\fr{h}^{\bb{C}}\subset\fr{u}$  we   
obtain a direct sum decomposition ${\frak u}^{}_{}  =  {\frak h}^{\mathbb C}_{} \oplus {\frak n}$, such that $\fr{g}\cap\fr{u}=\fr{h}$, where the the nilradical ${\frak n}$ of $ {\frak u}$ and the subalgebra $\fr{h}^{\bb{C}}$ are  given by ${\frak n}  = \bigoplus^{}_{\alpha \in \Delta^+ - [\Pi^{}_0]^{+}} {\frak g }^{\mathbb C}_{\alpha}$ and ${\frak h}^{\mathbb C}_{}  =  {\frak t}^{\mathbb C}_{}\oplus
\bigoplus^{}_{\alpha \in [\Pi^{}_0]} {\frak g }^{\mathbb C}_{\alpha}$, respectively.
  Consequently  the real subalgebra $\frak h$ has the form
by ${\frak h} ={\frak t} \oplus \bigoplus_{\alpha \in \left[\Pi_0\right]^{+}} \{
{\mathbb R}A_{\al} + {\mathbb R}B_{\al}\}$.

From now on we will   denote  by $\widetilde\alpha=\sum_{k=1}^{\ell}c_{k}\al_{k}$ 
  the highest (or maximal) root of 
${\frak g }^{\mathbb C}$,  it means that $c_{k}\geq m_{k}$ for any other positive root $\al=\sum_{k=1}^{\ell}m_{k}\al_{k}\in \Delta^{+}$. 
 Next we will call  {\it height} of a  simple root $\al_{i}\in\Pi$  the positive integer $c_{i}$  and  we  will use  the map  $\Hgt : \Pi\to\bb{Z}_{+}$, $\al_{i}\mapsto \Hgt(\al_{i}):=c_{i}$.
 
\begin{prop}{\textnormal{(\cite[Proposition 4.3]{Bur})}} \label{Burs-Rawn} Let $\frak z$
 be the center of the nilpotent Lie algebra $\frak n$. 
Then we have ${\rm ad} ( {\frak h}^{\mathbb C}_{} ) ({\frak z}) \subset {\frak z}$ and 
the action of $ {\frak h}^{\mathbb C}_{}$ on $ {\frak z} $ is irreducible. 
Moreover, the ${\rm ad} ( {\frak h}^{\mathbb C}_{} )$-module ${\frak z} $ 
is generated by the highest root space ${\frak g }^{\mathbb C}_{\widetilde\alpha}$. 
\end{prop}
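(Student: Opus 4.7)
The plan is to verify $\ad$-invariance of $\fr{z}$ first, then to place the highest root space $\fr{g}^{\bb{C}}_{\widetilde\alpha}$ inside $\fr{z}$, and finally to exploit the fact that $\widetilde\alpha$ is the unique highest weight of the adjoint representation of the simple Lie algebra $\fr{g}^{\bb{C}}$ in order to force irreducibility; the generation statement is then an immediate corollary of irreducibility.

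For the $\ad$-stability, since $\fr{n}$ is the nilradical of $\fr{u}$ it is an ideal of $\fr{u}\supset\fr{h}^{\bb{C}}$, so each $h\in\fr{h}^{\bb{C}}$ acts on $\fr{n}$ by a derivation. For $z\in\fr{z}$ and any $n\in\fr{n}$, the Jacobi identity gives
\[
[[h,z],n]=[h,[z,n]]-[z,[h,n]]=0,
\]
since $[z,n]=0$ and $[h,n]\in\fr{n}$ commutes with the central element $z$; hence $[h,z]\in\fr{z}$. Next, since $\Pi_M\neq\emptyset$ and every simple root appears with a strictly positive coefficient in $\widetilde\alpha=\sum_k c_k\alpha_k$, one has $\widetilde\alpha\notin[\Pi_0]$, so $E_{\widetilde\alpha}\in\fr{n}$. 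The maximality of $\widetilde\alpha$ in $\Delta$ forces $\widetilde\alpha+\alpha\notin\Delta$ for every $\alpha\in\Delta^{+}\setminus[\Pi_0]^{+}$, so $[E_{\widetilde\alpha},\fr{g}^{\bb{C}}_{\alpha}]=0$ and $\fr{g}^{\bb{C}}_{\widetilde\alpha}\subset\fr{z}$.

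For the irreducibility, let $V\subset\fr{z}$ be a non-zero $\ad(\fr{h}^{\bb{C}})$-submodule. Because $\fr{t}^{\bb{C}}\subset\fr{h}^{\bb{C}}$ and $\fr{z}$ is a sum of root spaces, so is $V=\bigoplus_{\beta\in Z_V}\fr{g}^{\bb{C}}_{\beta}$ for some $Z_V\subset\Delta^{+}\setminus[\Pi_0]^{+}$. Pick $\beta\in Z_V$ that is maximal in the standard dominance ordering. For every $\gamma\in[\Pi_0]^{+}$ the vector $\ad(E_\gamma)E_\beta\in V$ must vanish by maximality, so $\beta+\gamma\notin\Delta$. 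Combined with the defining property of $\fr{z}$, namely $\beta+\alpha\notin\Delta$ for every $\alpha\in\Delta^{+}\setminus[\Pi_0]^{+}$, this yields $\beta+\alpha\notin\Delta$ for all $\alpha\in\Delta^{+}$. Hence $E_\beta$ is a highest weight vector of the adjoint representation of the simple algebra $\fr{g}^{\bb{C}}$, whose unique highest weight is $\widetilde\alpha$, forcing $\beta=\widetilde\alpha$. Thus every non-zero $\ad(\fr{h}^{\bb{C}})$-submodule of $\fr{z}$ contains the line $\fr{g}^{\bb{C}}_{\widetilde\alpha}$.

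Since $\fr{h}^{\bb{C}}$ is reductive, the finite-dimensional module $\fr{z}$ is completely reducible, so a non-trivial direct sum decomposition $\fr{z}=V_1\oplus V_2$ would yield two submodules sharing the line $\fr{g}^{\bb{C}}_{\widetilde\alpha}$, a contradiction. Therefore $\fr{z}$ is irreducible, and since it contains $\fr{g}^{\bb{C}}_{\widetilde\alpha}$ it must coincide with the cyclic $\ad(\fr{h}^{\bb{C}})$-submodule generated by this line. The main conceptual obstacle is the irreducibility step: one needs to rule out any submodule whose top weight differs from $\widetilde\alpha$, and this is exactly where the uniqueness of the highest weight of the adjoint representation of a simple Lie algebra is used in an essential way.
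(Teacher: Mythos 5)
The paper does not prove this statement at all: it is imported verbatim from Burstall--Rawnsley \cite[Proposition 4.3]{Bur}, so there is no internal proof to compare against. Your argument is a correct, self-contained proof and it is essentially the standard one. The $\ad$-stability computation via the Jacobi identity is fine (you correctly use that $\fr{n}$ is an ideal of $\fr{u}$ so that $[h,z]$ lands back in $\fr{n}$), the inclusion $\fr{g}^{\bb{C}}_{\widetilde\alpha}\subset\fr{z}$ from maximality of $\widetilde\alpha$ is fine, and the key irreducibility step --- taking a dominance-maximal root $\beta$ occurring in a nonzero submodule $V$, showing $E_\beta$ is annihilated by all of $\bigoplus_{\gamma\in\Delta^{+}}\fr{g}^{\bb{C}}_{\gamma}$, and invoking the uniqueness of the highest weight of the adjoint representation --- is exactly the right mechanism and correctly isolates where simplicity of $\fr{g}^{\bb{C}}$ enters. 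Two small points worth tightening. First, the proposition as placed in the paper still sits under the standing hypothesis ``$G$ semi-simple''; your proof (and the statement itself, via ``the highest root'') genuinely requires $\fr{g}^{\bb{C}}$ simple, which is the hypothesis the paper adopts only a few lines later --- it would be worth saying so explicitly. Second, ``$\fr{h}^{\bb{C}}$ is reductive, hence $\fr{z}$ is completely reducible'' is not literally true for an arbitrary module over a reductive Lie algebra (the center must act semisimply); here it holds because the center $\fr{h}_0^{\bb{C}}$ lies in $\fr{t}^{\bb{C}}$ and therefore acts diagonally on the root spaces spanning $\fr{z}$, and one sentence to that effect would close the gap.
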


 We denote by  $\fr{h}_{0}$  the center of $\fr{h}$, 
 and $\fr{h}_{0}^{\bb{C}}$ its complexification.
Since $\fr{h}^{\bb{C}}$  
is a reductive subalgebra of $\fr{g}^{\bb{C}}$,  
it  admits the decomposition $\fr{h}^{\bb{C}}=\fr{h}_{0}^{\bb{C}}\oplus\fr{h}^{\bb{C}}_{ss},
$ where $\fr{h}^{\bb{C}}_{ss}$ is the semi-simple part of $\fr{h}^{\bb{C}}$, given by 
$
\fr{h}^{\bb{C}}_{ss}=[\fr{h}^{\bb{C}}, \fr{h}^{\bb{C}}]=\bigoplus_{\al\in\Pi_{0}}\bb{C}\al  \bigoplus_{\al\in[\Pi_{0}]}\fr{g}_{\al}^{\bb{C}}.
$
     The set $[\Pi_{0}]$ is the root system of $\fr{h}^{\bb{C}}_{ss}$ and $\Pi_{0}$ is
a basis of simple roots for it.  
For convenience, we  will denote the set $[\Pi_{0}]$ by $\Delta_{H}$.
We   set $\Delta_{M}=\Delta\backslash\Delta_{H}$.  Roots belong to $\Delta_{M}$ are 
called {\it complementary roots} and they  have a significant role in the geometry of  
$M=G/H$.   For example, let  $\frak m$ be the 
orthogonal complement of $\frak h$ in
$\frak g$ with respect to $B$. Then we have $\frak g$ = $\frak h \oplus
\frak m$,  $\left[\,\frak h,\, \frak m\,\right] \subset \frak m$, and   we identify
  $\fr{m}$ with the tangent space $T_{o}G/H$  in $o=eH\in G/H$.
   Set $\Delta_{M}^{+}=\Delta^{+}\backslash\Delta_{H}^{+}$, where  $\Delta_{H}^{+}$ is the system of positive roots of $\fr{h}^{\bb{C}}$, i.e., $\Delta_{H}^{+}=[\Pi_{0}]^{+}.$
 Then 
 \begin{equation}\label{tang}
 \fr{m}=\bigoplus_{\al\in \Delta_{M}^{+}} \{
{\mathbb R} A_{\al} + {\mathbb R}B_{\al}\}.
\end{equation}
The complexified tangent space $\fr{m}^{\bb{C}}$ is given by
 $\fr{m}^{\bb{C}}=\sum_{\al\in \Delta_{M}}\fr{g}_{\al}^{\bb{C}}$, and  the set $\{E_{\al} : \al\in \Delta_{M}\}$ is a basis of $\fr{m}^{\bb{C}}$.   Note that although the set $\Pi_{M}$ consists of all these complementary 
 roots which are simple, is not in general  a basis of $\Delta_{M}$, that is $\Delta_{M}$ is not in general a root system. 
 
 Generalized flag manifolds $M=G/H$ of a compact connected simple Lie group $G$
 are classified  by using the   Dynkin diagram of $G$, as follows:  
   Let   $\Gamma=\Gamma(\Pi)$ be the Dynkin diagram corresponding to the base of simple roots $\Pi$ of
   the root system $\Delta$ of $\fr{g}^{\bb{C}}$ relative to the Cartan subalgebra $\fr{t}^{\bb{C}}$.  
  \begin{definition}\label{pdd}
   The painted Dynkin diagram of $M=G/H$ is obtained from the Dynkin diagram  $\Gamma=\Gamma(\Pi)$ 
   by painting black the nodes which correspond    to 
    the simple roots   of $\Pi_{M}$.
       The sub-diagram of white nodes with the connecting lines between them 
    determines the semi-simple part $\fr{h}_{ss}$ of the Lie algebra $\fr{h}$ of $H$, and  each black 
    node  gives rise to one $\fr{u}(1)$-summand (their totality forms the center $\fr{h}_{0}$ of $\fr{h}$).  
    \end{definition}
     Thus  the painted Dynkin diagram determines 
    the  isotropy group $H$ and the space $M=G/H$ completely. It should be noted that the resulting painted Dynkin diagram does not depend on  the choice of a maximal abelian subalgebra $\fr{t}$ and hence of $\Delta$. On the other hand the necessity of 
    making a choice of a base $\Pi$ for $\Delta$ (or equivalently of an ordering $\Delta^{+}$ in $\Delta$) reduces the number of painted Dynkin diagrams.   By using certain rules to determine whether
    different painted Dynkin diagrams define isomorphic 
    flag manifolds,  one  can obtain all flag manifolds $G/H$ of  a compact connected simple Lie group $G$ (cf. \cite{AA}).

\begin{remark}\label{center}
\textnormal{    
 The (real) dimension of the center $\fr{h}_{0}$ of the subalgebra $\fr{h}$ is equal to the number   of black nodes in the painted Dynkin diagram of $M=G/H$,
        or equivalent equal to the number of $\fr{u}(1)$ summands in the decomposition of $\fr{h}$. 
       By assuming that $\Pi_{M}=\{\alpha^{}_{i_1}, \cdots, \alpha^{}_{i_r}\}$, it follows that the fundamental weights $\Lambda_{i_{1}}, \ldots, \Lambda_{i_{r}}$
       form a basis of the dual space $\fr{h}^{*}_{0}$ of $\fr{h}_{0}$. Since $\fr{h}_{0}^{*}\cong\fr{h}_{0}$ via the Killing form of $\fr{g}$, we obtain  $\dim \fr{h}_{0}=r=|\Pi_{M}|$ where $|\Pi_{M}|$ is the cardinality of   $\Pi_{M}$ (cf. \cite{AP}). From  \cite[p.~507]{Bor}   it is well-known  that   
  $
H^{2}(M; \bb{R})=H^{1}(H; \bb{R})=\fr{h}_{0}.  
$ 
Thus the second Betti number $b_{2}(M)$ of the flag manifold $M=G/H$ is 
 equal to $\dim \fr{h}_{0}$ and it  is obtained directly from the  painted Dynkin diagram.  Moreover, any flag manifold $M=G/H$ of a simple Lie group $G$ with 
  $b_{2}(M)=r$,  is determined    by a subset $\Pi_{M}\subset\Pi$ with $|\Pi_{M}|=r$ and it is constructed in the above way.}
  \end{remark}
 
 From now on we assume that $G$ is {\it simple} and we  choose a subset $\Pi_{0}\subset\Pi$ such that $\Pi_{M}=\Pi-\Pi_0 =\{\alpha_i\}$, for some fixed   $i$ with  $1\leq i\leq \ell$.   Then  the corresponding flag manifold $M=G/H$
is such that $\dim\fr{h}_{0}=1$ and $b_{2}(M)=1$.   We also  assume that  $\Hgt(\al_{i})=N\in\bb{Z}^{+}$.
 To an integer $k$ with  $1\leq k\leq N$ we associate the set
$\Delta^{+}(\al_{i}, k) = \left\{ \alpha \in \Delta^+_{} \, \bigg\vert \ 
\alpha = \sum^{\ell}_{j=1} m_j^{}\alpha_{j}^{}, \  m_{i} = k \right\}$. 
Then it is obvious that $\Delta_{M}^{+}=\Delta^{+}\backslash\Delta_{H}^{+}=\bigcup_{1\leq k\leq N}\Delta^{+}(\al_i, k)$.
 We define a subspace ${\frak n}_k$ of the nilradical $\frak n$  by  ${\frak n}_k =\bigoplus_{\alpha \in \Delta^{+}(\al_{i}, k)} {\mathbb C} E_{\alpha}$. 
  Then  ${\frak n}_k$ $(k = 1, \cdots, N)$ are 
$\mbox{ad}({\frak h}^{\mathbb C}_{})$-invariant subspaces,  and $\frak n =\bigoplus^N_{j=1}{\frak n}^{}_j$ is an irreducible decomposition of
$\frak n$ (see \cite{wol}).  In view of  Proposition \ref{Burs-Rawn} we have  that ${\frak z} = {\frak n}_N$.   
We also define   subspaces ${\frak m}_k$  of $\frak m$, given by  
\begin{equation}\label{irred}
{\frak m}_k =\bigoplus_{\alpha \in \Delta^{+}(\al_i, k)} \{{\mathbb R} (E_{\alpha} + E_{-\alpha}) + {\mathbb R} \sqrt{-1} (E_{\alpha} - E_{-\alpha})\}.
\end{equation}
Note that $\fr{m}_{k}$ are $\mbox{Ad}(H)$-invariant submodules of $\fr{m}$  which are matually inequivalent each other, for any $k = 1, \cdots, N$ (\cite{Kim}). We also recall the following useful inclusions (see for example \cite{Chry2}):
\begin{equation}\label{Liem}
 [\fr{h}, \fr{m}_{i}]\subset\fr{m}_{i}, \quad  [{\frak m}_i, {\frak m}_i]\subset{\frak h} + {\frak m}_{ 2 i }, \quad [{\frak m}_i, {\frak m}_j]\subset {\frak m}_{ i + j }+ {\frak m}_{| i - j |}  \ \ (i\neq j). 
 \end{equation}
By using (\ref{tang}), we get a characterization of $\fr{m}$ in terms of the submodules $\fr{m}_{k}$:
   \begin{lemma}\label{mk}
Let $M=G^{\mathbb C}/U=C/H$ be a flag manifold of a compact connected simple Lie group $G$, defined by a subset $\Pi_{M}=\{\al_{i} : \Hgt(\al_{i})=N\}\subset\Pi$.  Then,  $\fr{m}=T_{o}M$ admits a decomposition $\frak m =  \bigoplus^N_{k=1}{\frak m}^{}_k$ into $N$ irreducible, inequivalent  $\Ad(H)$-submodules $\fr{m}_{k}$ defined by (\ref{irred}).  Moreover, it is $d_{k}=\dim_{\bb{R}}\fr{m}_{k}=2\cdot |\Delta^{+}(\al_i, k)|$, for any $1\leq k\leq N$.
\end{lemma}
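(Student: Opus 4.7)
The plan is to assemble the lemma directly from the material already developed in this section, since the decomposition $\fr{n}=\bigoplus_{k=1}^{N}\fr{n}_{k}$ of the nilradical has been introduced, and the tangent space has the description (\ref{tang}). What remains is to transfer invariance and irreducibility from the complex side to the real side.

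First I would establish the vector-space decomposition and the dimension formula. Every positive complementary root $\al=\sum_{j=1}^{\ell}m_{j}\al_{j}\in\Delta_{M}^{+}$ satisfies $m_{i}\geq 1$ (otherwise $\al\in[\Pi_{0}]^{+}$), and since $\Hgt(\al_{i})=N$ is the maximal value attained by $m_{i}$ on any positive root, the integer $m_{i}$ for $\al\in\Delta_{M}^{+}$ takes values in $\{1,\dots,N\}$. Hence $\Delta_{M}^{+}=\bigsqcup_{k=1}^{N}\Delta^{+}(\al_{i},k)$ is a disjoint union, and plugging into (\ref{tang}) yields $\fr{m}=\bigoplus_{k=1}^{N}\fr{m}_{k}$ with $\fr{m}_{k}$ as in (\ref{irred}); the dimension count $d_{k}=2|\Delta^{+}(\al_{i},k)|$ is then immediate, since each root $\al\in\Delta^{+}(\al_{i},k)$ contributes the two real vectors $A_{\al},B_{\al}$.

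Next I would verify $\Ad(H)$-invariance. Since $H=C(S)$ is the centralizer of a torus in a connected compact Lie group, $H$ is connected, so invariance reduces to $\ad(\fr{h})$-invariance. Passing to complexifications, $\fr{m}_{k}^{\bb{C}}=\fr{n}_{k}\oplus\fr{n}_{-k}$ where $\fr{n}_{-k}=\bigoplus_{\al\in\Delta^{+}(\al_{i},k)}\bb{C}E_{-\al}$. For any $\beta\in[\Pi_{0}]$ one has $[\fr{g}^{\bb{C}}_{\beta},\fr{g}^{\bb{C}}_{\pm\al}]\subset\fr{g}^{\bb{C}}_{\pm\al+\beta}$, and since $\beta$ has zero coefficient on $\al_{i}$, the root $\pm\al+\beta$ still has coefficient $\pm k$ on $\al_{i}$. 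Together with $[\fr{t}^{\bb{C}},\fr{g}^{\bb{C}}_{\pm\al}]\subset\fr{g}^{\bb{C}}_{\pm\al}$, this shows $[\fr{h}^{\bb{C}},\fr{n}_{\pm k}]\subset\fr{n}_{\pm k}$; restricting to the real form $\fr{h}\subset\fr{h}^{\bb{C}}$ preserves $\fr{m}_{k}$.

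Finally I would deduce irreducibility and pairwise inequivalence. The decomposition $\fr{n}=\bigoplus_{k=1}^{N}\fr{n}_{k}$ is the irreducible decomposition of the nilradical as an $\ad(\fr{h}^{\bb{C}})$-module, already recalled from \cite{wol} (and consistent with Proposition \ref{Burs-Rawn} for the top piece $\fr{n}_{N}=\fr{z}$); the conjugate module $\fr{n}_{-k}$ is then also irreducible. Because all weights of $\fr{n}_{k}$ have $\al_{i}$-coefficient $+k$ whereas all weights of $\fr{n}_{-k}$ have $\al_{i}$-coefficient $-k$, the two are non-isomorphic complex $\fr{h}^{\bb{C}}$-modules, and the same argument shows that $\fr{n}_{j}$ and $\fr{n}_{\pm k}$ are non-isomorphic for $j\neq k$. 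By the standard real/complex dictionary, $\fr{m}_{k}$ is the unique real form of the non-self-conjugate complex module $\fr{n}_{k}\oplus\fr{n}_{-k}$, and such a real form is automatically irreducible; inequivalence of the $\fr{m}_{k}$ as real $\Ad(H)$-modules follows from inequivalence of their complexifications. The only delicate step is this last real-versus-complex passage, but it is entirely standard, and in any case the conclusion is already recorded (with reference to \cite{Kim}) in the paragraph preceding the lemma.
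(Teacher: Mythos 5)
Your proposal is correct and follows essentially the same route as the paper, which states the lemma as an assembly of the facts recorded in the preceding paragraph: the disjoint union $\Delta_{M}^{+}=\bigsqcup_{k=1}^{N}\Delta^{+}(\al_{i},k)$ combined with (\ref{tang}) gives the direct sum and the dimension count, while the $\Ad(H)$-invariance, irreducibility and mutual inequivalence of the $\fr{m}_{k}$ are quoted from \cite{wol} and \cite{Kim}. You merely supply the standard details behind those citations (the root-coefficient argument for invariance and the real/complex dictionary for irreducibility and inequivalence), and these details are all sound.
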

 
 Note that according to the notation of \textsection 1, for the space $M=G^{\mathbb C}/U=G/H$ in Lemma \ref{mk}, it is  $N = q$. 
 
 \begin{remark}\label{KE}\textnormal{
 It is well known (cf.  \cite{AP}, \cite{Tak}, \cite{Chry3}) that for a flag manifold $G/H$, there is a 1-1 correspondence between $G$-invariant complex structures $J$ and compatible $G$-invariant   K\"ahler-Einstein metrics $h_{J}$, given by $J \leftrightarrow h_{J}=\{h_{\al}=(\delta_{\fr{m}}, \al) : \al\in \Delta_{M}^{+}\}$, where $h_{\al}=h_{J}(E_{\al}, E_{-\al})$ are the components of  the metric $h_{J}$ with respect to the base  $\{E_{\al} : \al\in\Delta_{M}\}$ of  $\fr{m}^{\bb{C}}$.  The weight $\delta_{\fr{m}}=(1/2)\sum_{\be\in\Delta_{M}^{+}}\be\in\sqrt{-1}\fr{h}_{0}$ is called {\it Koszul form}.  If we assume   that $M$ is defined by a subset $\Pi_{M}=\{\al_{i_1}, \ldots, \al_{i_r}\}$, then the following relation  holds: $2\delta_{\fr{m}}= u_{i_1}\cdot\Lambda_{\alpha^{}_{i_1}}+ \cdots + u_{i_r}\cdot\Lambda_{\alpha^{}_{i_r}}$. 
The positive integers $u_{i_1}>0, \ldots, u_{i_r}>0$  are called {\it Koszul numbers}.}
 \end{remark}
\begin{prop}\textnormal{(\cite{Bor})}    \label{B-H}
Let $M=G^{\mathbb C}/U$ $=$ $G/H$ be a flag manifold    defined as in Lemma \ref{mk}.  Then $M$ admits a
unique $G$-invariant K\"ahler-Einstein metric  given by 
\begin{equation}
h_{J}=B |_{{\mbox{\footnotesize$ \frak m$}}_1} + 2\cdot B |_{{\mbox{\footnotesize$ \frak m$}}_2} + \cdots +  N\cdot B |_{{\mbox{\footnotesize$ \frak m$}}_N} \label{eq6}.
\end{equation}
\end{prop}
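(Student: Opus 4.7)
The plan is to deduce the proposition directly from Remark \ref{KE}, which tells us that for any $G$-invariant complex structure $J$ on a flag manifold, the compatible $G$-invariant K\"ahler--Einstein metric $h_{J}$ has components with respect to the Weyl basis given by $h_{\alpha}=(\delta_{\mathfrak m},\alpha)$ for $\alpha\in\Delta_M^{+}$. Since $b_{2}(M)=1$ forces $\Pi_{M}=\{\alpha_{i}\}$ to be a singleton, the set of invariant complex structures on $M$ is finite (in fact trivial up to conjugation by Borel's theorem \cite{Bor}), so there is a unique compatible K\"ahler--Einstein metric up to scale. This takes care of uniqueness and reduces the proposition to an explicit computation of the coefficients.

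For the computation, the first step is to specialize the Koszul form to the case $|\Pi_{M}|=1$. By Remark \ref{KE} we have
\begin{equation*}
2\delta_{\mathfrak m}=u_{i}\cdot \Lambda_{\alpha_{i}},
\end{equation*}
for some Koszul number $u_{i}>0$. Next I pick an arbitrary root $\alpha\in\Delta^{+}(\alpha_{i},k)$ and write $\alpha=\sum_{j=1}^{\ell}m_{j}\alpha_{j}$ with $m_{i}=k$. Using the defining property $2(\Lambda_{\alpha_{i}},\alpha_{j})/(\alpha_{j},\alpha_{j})=\delta_{ij}$ of the fundamental weights, the pairing collapses to
\begin{equation*}
(\Lambda_{\alpha_{i}},\alpha)=\sum_{j=1}^{\ell}m_{j}(\Lambda_{\alpha_{i}},\alpha_{j})=m_{i}\cdot \frac{(\alpha_{i},\alpha_{i})}{2}=k\cdot\frac{(\alpha_{i},\alpha_{i})}{2}.
\end{equation*}
Therefore $h_{\alpha}=(\delta_{\mathfrak m},\alpha)=\frac{u_{i}(\alpha_{i},\alpha_{i})}{4}\cdot k$, a quantity which depends only on the level $k$, and not on the specific $\alpha\in\Delta^{+}(\alpha_{i},k)$.

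The final step is to translate this from Weyl-basis components to the $B|_{\mathfrak m_{k}}$-normalization used in (\ref{eq21}). Since $\{A_{\alpha},B_{\alpha}\}_{\alpha\in\Delta^{+}(\alpha_{i},k)}$ is a basis of $\mathfrak m_{k}$ and the constant $h_{\alpha}$ is the same for every $\alpha$ at level $k$, the restriction $h_{J}|_{\mathfrak m_{k}}$ is proportional to $B|_{\mathfrak m_{k}}$ with scaling factor linear in $k$. Concretely we obtain
\begin{equation*}
h_{J}=\frac{u_{i}(\alpha_{i},\alpha_{i})}{4}\bigl(1\cdot B|_{\mathfrak m_{1}}+2\cdot B|_{\mathfrak m_{2}}+\cdots+N\cdot B|_{\mathfrak m_{N}}\bigr),
\end{equation*}
and rescaling the overall positive constant (which is permitted since K\"ahler--Einstein metrics are unique only up to homothety) yields exactly (\ref{eq6}).

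The only nontrivial input is the identification $h_{\alpha}=(\delta_{\mathfrak m},\alpha)$ from Remark \ref{KE}, which encodes the classical fact that the Ricci form of an invariant K\"ahler metric on $G/H$ is represented by $2\delta_{\mathfrak m}$; once this is granted, the rest is the elementary fundamental-weight pairing above. In other words, the main obstacle is conceptual rather than computational, and it has already been absorbed into the statement of Remark \ref{KE}.
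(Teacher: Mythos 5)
Your proposal is correct and follows essentially the same route as the paper's own proof: both deduce uniqueness from Borel's result on the invariant complex structure, invoke Remark \ref{KE} to write $h_{\alpha}=(\delta_{\mathfrak m},\alpha)$ with $2\delta_{\mathfrak m}=u_{i}\Lambda_{i}$, and then use the fundamental-weight pairing to get $h_{\alpha}$ proportional to the level $k$ before rescaling. Your bookkeeping of the constant $\tfrac{u_{i}(\alpha_{i},\alpha_{i})}{4}$ is in fact slightly more careful than the paper's, but the argument is the same.
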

\begin{proof} We give a short proof here since one is  difficult to find it in the literature. 
By \cite[Proposition 13.8]{Bor} we know that $M$ admits a unique $G$-invariant complex structure $J$ induced by the invariant ordering $\Delta_{M}^{+}=\Delta^{+}/\Delta_{H}^{+}$ (we identify $J$ with its conjugate $\bar{J}$ which is induced by the invariant ordering $\Delta_{M}^{-}=-\Delta_{M}^{+}$).    The complex structure $J$ is described  by an $\ad(\fr{h}^{\bb{C}})$-invariant endomorphism $J_{o}$ on $\fr{m}^{\bb{C}}$ with $J_{o}^{2}=-\Id_{\fr{m}^{\bb{C}}}$, explicitly determined by the formulae
$J_{o}E_{\pm\al}=\pm \sqrt{-1}E_{\pm\al}$, for any $\al\in\Delta_{M}^{+}$. In view of Remark \ref{KE},  $M$ admits a unique   $G$-invariant K\"ahler-Einstein metric $h_{J}$  compatible with $J$.  Because  $\Pi_{M}=\{\al_{i} : \Hgt(\al_i)=N\}$, (where $i$ is fixed, $1\leq i\leq \ell$), we have $\delta_{\fr{m}}=u_{i}/2 \cdot\Lambda_{i}$ with  $u_{i}>0$.  From Lemma  \ref{mk} it is $\fr{m}=\bigoplus_{k=1}^{N}\fr{m}_{k}$, thus  the $G$-invariant metric $h_{J}$ on $M$ has the form $h_{J}=\sum_{k=1}^{N}h_{k}\cdot B|_{\fr{m}_k}$  with $(h_{1}, \ldots, h_{N})\in\bb{R}^{N}_{+}$.  Here  by $h_{k}$  we denote the component  of the metric  $h_{J}$ on the specific submodule $\fr{m}_{k}$, for any $1\leq k\leq N$, i.e.  
  $h_{k}=h_{J}(E_{\al}, E_{-\al})$ with  $\al\in\Delta^{+}(\al_{i}, k)$;  by  Remark  \ref{KE}  it is  defined as follows: $h_{k}=h_{J}(E_{\al}, E_{-\al})=(\delta_{\fr{m}}, \al)$ with  $\al\in\Delta^{+}(\al_{i}, k)$.    Because $(\Lambda_{i}, \al_{i})=(\al_{i}, \al_{i})/2$ it easy to see that
  \begin{eqnarray*} 
  h _{k}= (\delta_{\fr{m}}, \al) =(\frac{u_{i}}{2}\cdot\Lambda_{i}, m_1\al_1+\cdots +k\al_{i}+\cdots + m_{\ell}\al_{\ell})  
  = (\frac{u_{i}}{2}\cdot\Lambda_{i}, k\al_{i})= k \cdot  u_{i}\cdot(\al_{i}, \al_{i}).  
  \end{eqnarray*}
 Since the simple root $\al_{i}$ is fixed, the number $u_{i}\cdot(\al_{i}, \al_{i})$ is constant and independent of the integer $k$ for any $1\leq k\leq N$.  By normalizing the metric  the proof is complete.
\end{proof}

\markboth{Ioannis Chrysikos and Yusuke Sakane}{On the classification of homogeneous Einstein metrics on    flag manifolds with $b_2(M)=1$}
\section{Homogeneous Einstein metrics on $\E_8/\U(1)\times \SU(4)\times \SU(5)$}
\markboth{Ioannis Chrysikos and Yusuke Sakane}{On the classification of homogeneous Einstein metrics on   flag manifolds with $b_2(M)=1$}

\subsection{The construction of the homogeneous Einstein equation on  $\bold{\E_8/\U(1)\times \SU(4)\times \SU(5)}$}
Let $G=\E_8$.  A basis of simple   roots for the root system of $\E_8$ is given by 
 $\Pi=\{\al_{1}=e_{1}-e_{2}, \ldots, \al_{7}=e_{7}-e_{8}, \al_{8}=e_{6}+e_{7}+e_{8}\}$, and  $\tilde{\al}=2\al_{1}+3\al_{2}+4\al_{3}+5\al_{4}+6\al_{5}+4\al_{6}+2\al_{7}+3\al_{8}$ (cf.  \cite{Chry3}).
   We set $\Pi_{M}=\{\al_4\}$, thus $\Pi_{0}=\{\al_1, \al_2, \al_3, \al_5, \al_6, \al_7, \al_8\}$. 
So we obtain the (extended) painted Dynkin diagram (the douple circle  denotes the negative of $\widetilde\al$)
\[
\begin{picture}(160,25)(-5, -10)
\put(-3,9.5){\circle{6}}
\put(-3,9.5){\circle{3}}
\put(-0, 10){\line(1,0){13.3}}
\put(15, 9.5){\circle{4 }}
\put(15, 18){\makebox(0,0){$\al_1$}}
\put(17, 10){\line(1,0){14}}
\put(33.5, 9.5){\circle{4 }}
\put(33.5, 18){\makebox(0,0){$\al_2$}}
\put(35, 10){\line(1,0){13.6}}
 \put(51, 9.5){\circle{4 }}
 \put(51, 18){\makebox(0,0){$\al_3$}}
\put(53,10){\line(1,0){14}}
\put(69,9.5){\circle*{4 }}
\put(69, 18){\makebox(0,0){$\al_4$}}
\put(89,-8){\circle{4}}
\put(99, -9.5){\makebox(0,0){$\al_8$}}
\put(89,-6){\line(0,1){14}}
\put(71,10){\line(1,0){16}}
\put(89,9.5){\circle{4 }}
\put(89, 18){\makebox(0,0){$\al_5$}}
\put(90.7,10){\line(1,0){16}}
\put(109,9.5){\circle{4 }}
\put(109, 18){\makebox(0,0){$\al_6$}}
\put(111,10){\line(1,0){16}}
\put(129,9.5){\circle{4 }}
\put(129, 18){\makebox(0,0){$\al_7$}}
\end{picture}
 \]
  It defines the flag manifold  $M=G/H=\E_8/\U(1)\times \SU(4)\times \SU(5)$. Let $\fr{g}=\fr{h}\oplus\fr{m}$ be a reductive decomposition of $\fr{g}$ with respect to $B$.  Because $\Hgt(\al_4)=5$,  from Lemma \ref{mk} it follows that $N=5=q$, that is   $ \frak m={\frak m}^{}_1 \oplus {\frak m}^{}_2 \oplus {\frak m}^{}_3 \oplus {\frak m}^{}_4 \oplus {\frak m}^{}_5$.   We consider an $\E_8$-invariant Riemannian metric $ ( \ \ , \ \ )$ on $G/H=\E_8/\U(1)\times\SU(4)\times\SU(5)$ 
 given by 
\begin{equation} ( \ \ , \ \ ) = x^{}_1 \cdot B|_{{\mbox{\footnotesize$ \frak m$}}_1} + x^{}_2\cdot B|_{{\mbox{\footnotesize$ \frak m$}}_2} + x^{}_3 \cdot B|_{{\mbox{\footnotesize$ \frak m$}}_3}
 + x^{}_4\cdot B|_{{\mbox{\footnotesize$ \frak m$}}_4}
 + x^{}_5\cdot B|_{{\mbox{\footnotesize$ \frak m$}}_5}, \quad (x_1, x_2, x_3, x_4, x_5)\in\bb{R}_{+}^{5}. \label{metric001}
\end{equation}
 By    applying Theorem \ref{ric1}, we obtain that:
\begin{prop}
The components ${r}_{i}$ of  the Ricci tensor ${r}$ for the $G$-invariant metric $( \ , \ )$ on $G/H$ defined by {\em (\ref{metric001})}, are given as follows  
\begin{equation}\label{eq14}
\left\{\begin{array}{ll} 
r_1 &=  \displaystyle{\frac{1}{2x_1} -
\frac{1}{2\,d_1} {2 \brack 11} \frac{x_2}{{x_1}^2}} + \frac{1}{2\,d_1} {1 \brack 23}
\biggl(\frac{x_1}{x_2 x_3} - \frac{x_2}{x_1 x_3}- \frac{x_3}{x_1 x_2}
\biggr) \\ & 
\displaystyle{ +  \frac{1}{2\,d_1} {1 \brack 34}
\biggl(\frac{x_1}{x_3 x_4} - \frac{x_3}{x_1 x_4}- \frac{x_4}{x_1 x_3}
\biggr)+  \frac{1}{2\,d_1} {1 \brack 45}
\biggl(\frac{x_1}{x_4 x_5} - \frac{x_4}{x_1 x_5}- \frac{x_5}{x_1 x_4}
\biggr)}
\\ & \\
r_2 &=  \displaystyle{\frac{1}{2x_2} + 
\frac{1}{4\,d_2} {2 \brack 11}}\biggl( \frac{x_2}{{x_1}^2} - \frac{2}{x_2}\biggr)
 - \frac{1}{2\,d_2} {4 \brack 22}\frac{x_4}{{x_2}^2} \\ & 
 \displaystyle{  +  \frac{1}{2\,d_2} {2 \brack 13}}
\biggl(\frac{x_2}{x_1 x_3} - \frac{x_1}{x_2 x_3}- \frac{x_3}{x_2 x_1}
\biggr) 
\displaystyle{ +  \frac{1}{2\,d_2} {2 \brack 35}
\biggl(\frac{x_2}{x_3 x_5} - \frac{x_3}{x_2 x_5}- \frac{x_5}{x_2 x_3}
\biggr)} 
\\ & \\
r_3 &=  \displaystyle{\frac{1}{2 x_3} +}
\frac{1}{2\,d_3} {3 \brack 12}
\biggl(\frac{x_3}{x_1 x_2} - \frac{x_2}{x_3 x_1}- \frac{x_1}{x_3 x_2} \biggr) 
 +  \frac{1}{2\,d_3} {3\brack 14}
\biggl(\frac{x_3}{x_1 x_4} - \frac{x_1}{x_3 x_4}- \frac{x_4}{x_1 x_3}
\biggr) \\ & 
 \displaystyle{+  \frac{1}{2\,d_3} {3 \brack 25}
\biggl(\frac{x_3}{x_2 x_5} - \frac{x_2}{x_3 x_5}- \frac{x_5}{x_3 x_2}
\biggr)}
\\ & \\
r_4 & = \displaystyle{\frac{1}{2 x_4} + 
\frac{1}{4\,d_4} {4 \brack 22}}\biggl( \frac{x_4}{{x_2}^2} - \frac{2}{x_4}\biggr)
 \displaystyle{  +  \frac{1}{2\,d_4} {4 \brack 13}}
\biggl(\frac{x_4}{x_1 x_3} - \frac{x_1}{x_3 x_4}- \frac{x_3}{x_4 x_1}
\biggr)  \\ & 
\displaystyle{ +  \frac{1}{2\,d_4} {4 \brack 15}
\biggl(\frac{x_4}{x_1 x_5} - \frac{x_1}{x_4 x_5}- \frac{x_5}{x_1 x_4}
\biggr)} 
\\ & \\
r_5 & = \displaystyle{\frac{1}{2 x_5} +  
\frac{1}{2\,d_5} {5 \brack 23}}
\left(\frac{x_5}{x_2 x_3} - \frac{x_2}{x_3 x_5}- \frac{x_3}{x_2 x_5}
\right)  +
 \displaystyle{\frac{1}{2\,d_5} {5 \brack 14}
\left(\frac{x_5}{x_1 x_4} - \frac{x_1}{x_4 x_5}- \frac{x_4}{x_1 x_5}
\right). } 
\end{array}
\right.
\end{equation}
\end{prop}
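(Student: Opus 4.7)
The plan is to apply Theorem \ref{ric1} directly to the decomposition $\fr{m}=\fr{m}_1\oplus\fr{m}_2\oplus\fr{m}_3\oplus\fr{m}_4\oplus\fr{m}_5$ supplied by Lemma \ref{mk} (with $q=N=5$). Since the metric (\ref{metric001}) is of the form (\ref{eq21}) and the five submodules are mutually inequivalent (again by Lemma \ref{mk}), the general expression (\ref{eq5}) applies verbatim. The essential task is therefore to determine precisely which of the $125$ structure constants $c^{k}_{ij}=\displaystyle{k\brack{ij}}$ are forced to vanish, and then to collect the surviving terms.

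First I would appeal to the bracket inclusions (\ref{Liem}): one has $[\fr{m}_i,\fr{m}_i]\subset\fr{h}+\fr{m}_{2i}$ and $[\fr{m}_i,\fr{m}_j]\subset\fr{m}_{i+j}+\fr{m}_{|i-j|}$ for $i\neq j$. Together with the invariance of $B$ under the bracket (so $c^{k}_{ij}=B([\fr{m}_i,\fr{m}_j],\fr{m}_k)$), this forces $c^{k}_{ij}=0$ unless the unordered triple $\{i,j,k\}$ satisfies $k=i+j$ or $k=|i-j|$ (and $k\le 5$). A short enumeration within the range $1\le i\le j\le k\le 5$ yields exactly six admissible triples, namely
\begin{equation*}
(1,1,2),\ (1,2,3),\ (1,3,4),\ (1,4,5),\ (2,2,4),\ (2,3,5),
\end{equation*}
so that the only possibly nonzero structure constants are ${2\brack 11}$, ${3\brack 12}$, ${4\brack 13}$, ${5\brack 14}$, ${4\brack 22}$ and ${5\brack 23}$, together with all their permutations obtained from the symmetry $c^{k}_{ij}=c^{k}_{ji}=c^{j}_{ki}$ recalled just after Theorem \ref{ric1}.

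Next I would substitute into (\ref{eq5}) for each $k=1,\dots,5$, keeping only the six surviving symbols. For $r_1$ the first sum $\sum_{j,i}\frac{x_1}{x_jx_i}{1\brack ji}$ picks up the triples $(1,2,3),(1,3,4),(1,4,5)$ (each contributing twice, which cancels the factor $\tfrac{1}{4d_1}$ into $\tfrac{1}{2d_1}$), while the second sum $\sum_{j,i}\frac{x_j}{x_1x_i}{j\brack 1i}$ receives contributions from $(1,1,2)$ and from the same three mixed triples in their crossed form, producing the combinations $\frac{x_2}{x_1x_3}+\frac{x_3}{x_1x_2}$, $\frac{x_3}{x_1x_4}+\frac{x_4}{x_1x_3}$, $\frac{x_4}{x_1x_5}+\frac{x_5}{x_1x_4}$; regrouping yields the stated formula for $r_1$. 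The analogous bookkeeping for $r_2,r_3,r_4,r_5$ is entirely mechanical: one checks which of the six triples contains the index $k$, separates the \emph{diagonal} contributions ${2\brack 11}$ and ${4\brack 22}$ (which also affect $r_2,r_4$ through the first sum as explained in the proof of Theorem \ref{ric1}), and reassembles the remaining mixed terms in the symmetric form displayed in (\ref{eq14}).

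The work is purely combinatorial and there is no conceptual obstacle; the only point requiring care is the correct treatment of the diagonal triples $(1,1,2)$ and $(2,2,4)$, where the factor $\tfrac{1}{4d_k}$ in (\ref{eq5}) does not simplify in the same way as for the mixed triples, producing the characteristic terms $\tfrac{1}{4d_2}{2\brack 11}(\tfrac{x_2}{x_1^2}-\tfrac{2}{x_2})$ in $r_2$ and $\tfrac{1}{4d_4}{4\brack 22}(\tfrac{x_4}{x_2^2}-\tfrac{2}{x_4})$ in $r_4$. Once all six surviving constants are tracked through the five equations and the symmetries of $c^{k}_{ij}$ are applied, (\ref{eq14}) follows.
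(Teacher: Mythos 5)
Your proposal is correct and follows essentially the same route as the paper, which simply invokes Theorem \ref{ric1} without further comment: you identify the six admissible unordered triples $(1,1,2),(1,2,3),(1,3,4),(1,4,5),(2,2,4),(2,3,5)$ from the inclusions (\ref{Liem}) and then evaluate (\ref{eq5}) term by term, which is exactly the implicit content of the paper's one-line derivation. The only bookkeeping detail worth noting is that the diagonal triple $(1,1,2)$ also contributes a term $\tfrac{1}{2d_1}\tfrac{1}{x_2}{2\brack 11}$ to the \emph{first} sum of $r_1$ (via ${1\brack 12}={2\brack 11}$), which cancels against an equal term in the second sum to leave only $-\tfrac{1}{2d_1}{2\brack 11}\tfrac{x_2}{x_1^2}$; with that cancellation accounted for, your regrouping reproduces (\ref{eq14}) exactly.
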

From Proposition \ref{B-H}, we known that the metric 
$ B|_{\mbox{\footnotesize$ \frak m$}_1} + 2 \cdot B|_{\mbox{\footnotesize$ \frak m$}_2} + 3 \cdot B|_{\mbox{\footnotesize$ \frak m$}_3} + 4\cdot B|_{\mbox{\footnotesize$ \frak m$}_4} + 5\cdot B|_{\mbox{\footnotesize$ \frak m$}_5}$ is the unique K\"ahler-Einstein on $G/H$.  By substituting these values in the system $\{{r}_{1} = {r}_{2} = {r}_{3} = {r}_{4} = {r}_{5}\}$, we obtain  
  {\small{\begin{eqnarray} \label{eq15} 
  & & \frac{1}{2} - \frac{1}{d_1}\biggl( {2 \brack 11} +  {3 \brack 12} + {4 \brack 13} + {5 \brack 14}  \biggr)=    \frac{1}{4} + \frac{1}{d_2}\biggl(\frac{1}{4} {2 \brack 11} -\frac{1}{2}  {3 \brack12} -\frac{1}{2}   {4 \brack 22} -\frac{1}{2}   {5 \brack 23}  \biggr)   \nonumber    \\
  &=& \frac{1}{6} + \frac{1}{d_3}\biggl( \frac{1}{3} {3 \brack 12} -\frac{1}{3}  {4 \brack 13} -\frac{1}{3}  {5 \brack 23}   \biggr)  =  \frac{1}{8} + \frac{1}{d_4}\biggl(\frac{1}{4}  {4 \brack 13} -\frac{1}{4}   {5 \brack 14} + \frac{1}{8} {4 \brack 22}  \biggr) \\
   & =& \frac{1}{10} + \frac{1}{d_5}\biggl( \frac{1}{5} {5 \brack 14} + \frac{1}{5}  {5 \brack 23}   \biggr). \nonumber  
           \end{eqnarray} }}       

\subsection{ Use of submersion}
From   (\ref{eq15}) we obtain a system with four equations and six unknowns, namely the triples $\displaystyle {2 \brack 11}$, $\displaystyle{3 \brack 12}$, $\displaystyle {4 \brack 13} $, $\displaystyle {5 \brack 14}$, $\displaystyle {4 \brack 22}$, and  $\displaystyle{5 \brack 23}$. 
In order to compute them explicitly, we make use of Lemma \ref{submersion_ricci}. We put  ${\frak k}= {\frak h} \oplus {\frak m}_5$,  ${\frak k}_1 = {\frak h}_0 \oplus {\frak h}_1 \oplus {\frak m}_5$,  
 ${\frak p}_1= {\frak m}_1 \oplus {\frak m}_4$, ${\frak p}_2= {\frak m}_2 \oplus {\frak m}_3$ and ${\frak q}_1= {\frak m}_5$.  
Then  ${\frak k}$ is a subalgebra  of ${\frak g}$. 
By using (\ref{Liem}) we  get that 
 \begin{equation}\label{n1n2}
  [{\frak p}_1, \ {\frak p}_1] \subset {\frak p}_2 \oplus {\frak k} , \quad 
  [{\frak p}_1, \ {\frak p}_2] \subset {\frak p}_1 \oplus {\frak p}_2, \quad 
   [{\frak p}_2, \ {\frak p}_2] \subset {\frak p}_1 \oplus {\frak k}.
   \end{equation}     
 Thus, we  obtain  an irreducible decomposition ${\frak g} =  {\frak k}\oplus {\frak p}_1  \oplus {\frak p}_2$ as
$\mbox{Ad}(K)$-modules, which are mutually non-equivalent (cf. \cite[p.~575]{Wa1}). 

Note that we have  an irreducible decomposition  \begin{equation} {\frak g}  = {\frak h}_0 \oplus {\frak h}_1 \oplus  {\frak h}_2 \oplus {\frak m}_1 \oplus  {\frak m}_2  \oplus {\frak m}^{}_3 \oplus {\frak m}^{}_4 \oplus {\frak m}^{}_5  \label{neweq6} \end{equation} as $\mbox{Ad}(H)$-modules, where ${\frak h}_0$ is the center of ${\frak h}$ and ${\frak h}_1 = \frak{su}(4)$,  ${\frak h}_2 = \frak{su}(5)$, and that     
 $d_{0}=\dim\fr{h}_{0}=1$, $d_1 = \dim{\frak h}^{}_1=15$ and $ d_2 = \dim {\frak h}^{}_2=24$.  Also, 
 by applying the second part of Lemma \ref{mk} we obtain that $d_1 = \dim {\frak m}^{}_1 = 80$, $ d_2 = \dim {\frak m}^{}_2= 60$, $ d_3 = \dim {\frak m}^{}_3 = 40$, $ d_4 = \dim {\frak m}^{}_4 = 20$ and $ d_5 = \dim {\frak m}^{}_5 = 8$. 

\begin{prop} \label{newprop1} In the decomposition \em{(\ref{neweq6})} we can take the ideal $\fr{h}_2$ such that  $\left[{\frak h}_2, {\frak m}_5\right] = \{0\}$.  \end{prop}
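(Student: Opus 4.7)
My plan is to combine Proposition~\ref{Burs-Rawn} with a short dimension count on irreducible representations of $\fr{sl}(5,\bb{C})$. First I would identify $\fr{m}_5$ with the (real form of the) center $\fr{z}$ of the nilradical $\fr{n}$. By Lemma~\ref{mk}, $\fr{m}_5$ is an irreducible $\Ad(H)$-module of real dimension $d_5=8$, and its complexification decomposes as $\fr{m}_5^{\bb{C}}=\fr{n}_5\oplus\overline{\fr{n}_5}$, where
\[
\fr{n}_5=\bigoplus_{\al\in\Delta^+(\al_4,5)}\bb{C}\,E_{\al}
\]
has complex dimension $|\Delta^+(\al_4,5)|=d_5/2=4$. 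Since the height of $\al_4$ in $\widetilde{\al}$ is $5$, the highest root space $\fr{g}^{\bb{C}}_{\widetilde{\al}}$ lies in $\fr{n}_5$, so $\fr{n}_5=\fr{z}$ and, by Proposition~\ref{Burs-Rawn}, $\fr{n}_5$ is irreducible under $\ad(\fr{h}^{\bb{C}})$.

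Next, I would use the reductive decomposition
\[
\fr{h}^{\bb{C}}=\fr{h}_0^{\bb{C}}\oplus\fr{sl}(4,\bb{C})\oplus\fr{sl}(5,\bb{C})
\]
coming from $H=\U(1)\cdot\SU(4)\cdot\SU(5)$. Every complex finite-dimensional irreducible representation of this reductive Lie algebra factors as an outer tensor product $\chi\otimes V\otimes W$, where $\chi$ is a character of $\fr{h}_0^{\bb{C}}$ and $V$, $W$ are irreducible complex representations of $\fr{sl}(4,\bb{C})$, $\fr{sl}(5,\bb{C})$, respectively. Applied to $\fr{n}_5$, this yields $\dim_{\bb{C}}\fr{n}_5=(\dim V)(\dim W)=4$.

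The heart of the argument is then the following elementary dimension check: every non-trivial complex irreducible representation of $\fr{sl}(5,\bb{C})$ has dimension at least $5$ (the standard representation being the smallest). Since $\dim_{\bb{C}}\fr{n}_5=4<5$, the factor $W$ must be trivial, i.e., $\fr{sl}(5,\bb{C})$ acts trivially on $\fr{n}_5$. Identifying this $\fr{sl}(5,\bb{C})$-summand with $\fr{h}_2^{\bb{C}}$, we conclude $[\fr{h}_2^{\bb{C}},\fr{n}_5]=0$. Passing to complex conjugates gives $[\fr{h}_2^{\bb{C}},\overline{\fr{n}_5}]=0$ as well, hence $[\fr{h}_2^{\bb{C}},\fr{m}_5^{\bb{C}}]=0$, and restricting to the real form of $\fr{g}$ yields $[\fr{h}_2,\fr{m}_5]=\{0\}$, which is the desired conclusion. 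I expect no genuine obstacle: the only content is the representation-theoretic dimension bound, and the numerology ($4<5$) is tight enough to pin down the action of the $\fr{su}(5)$ factor uniquely as the trivial one.
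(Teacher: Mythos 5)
Your argument is correct, but it takes a genuinely different route from the paper's. The paper uses the other half of Proposition~\ref{Burs-Rawn}: since $\fr{z}=\fr{n}_5$ is \emph{generated} by the highest root space, $\fr{n}_5=[\fr{h}^{\bb{C}},E_{\widetilde\al}]$, and since exactly one simple root of $\E_8$ is non-orthogonal to $\widetilde\al$ (and it lies in the $A_3$-block), the ideal $\fr{h}_2=\fr{su}(5)$ can be chosen with $[\fr{h}_2^{\bb{C}},E_{\widetilde\al}]=0$; the Jacobi identity together with the ideal property then kills $[\fr{h}_2^{\bb{C}},\fr{n}_5]$. You instead use the \emph{irreducibility} half of Proposition~\ref{Burs-Rawn} plus the outer-tensor-product structure of irreducible modules over the reductive algebra $\fr{h}_0^{\bb{C}}\oplus\fr{sl}(4,\bb{C})\oplus\fr{sl}(5,\bb{C})$, and close with the bound $\dim_{\bb{C}}\fr{n}_5=4<5=\min\{\dim W: W \text{ nontrivial irreducible over } \fr{sl}(5,\bb{C})\}$; the passage back to the real form via conjugation is handled correctly. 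Each approach has something to recommend it: yours is clean representation theory that avoids any root combinatorics beyond the count $d_5=8$, while the paper's is numerics-free and, more importantly, transfers verbatim to Proposition~\ref{newprop2}. There your dimension count would only do half the job: $\dim_{\bb{C}}\fr{n}_6=5$ is prime, which rules out a nontrivial $\fr{sl}(3,\bb{C})$-action (no $5$-dimensional irreducible), but not a nontrivial $\fr{sl}(2,\bb{C})$-action, since $\fr{sl}(2,\bb{C})$ has the $5$-dimensional irreducible $\Sym^4\bb{C}^2$; so the highest-root argument is the more robust one. For the proposition actually at hand, though, your proof is complete.
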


\begin{proof}  We can assume that $\fr{h}_2\neq\{0\}$.  Note that there is only a simple root $\alpha_{j_0}=\alpha_{8}$  with $( \alpha_{j_0}, \widetilde\alpha ) \neq 0$ and thus we can take  the ideal  ${\frak h}_2$ so that  $\left[{\frak h}_2^{\mathbb C}, E_{\widetilde\alpha} \right] = \{0\}$. Since ${\frak n}_5 = [ {\frak h}_{}^{\mathbb C}, E_{\widetilde\alpha}]$, we have  that $\left[{\frak h}_2^{\mathbb C}, {\frak n}_5 \right] = \left[{\frak h}_2^{\mathbb C}, [ {\frak h}_{}^{\mathbb C}, E_{\widetilde\alpha}] \right]  \subset  \left[\left[{\frak h}_2^{\mathbb C},  {\frak h}_{}^{\mathbb C}\right],  E_{\widetilde\alpha} \right] +  \left[{\frak h}_{}^{\mathbb C},  \left[ {\frak h}_{2}^{\mathbb C},  E_{\widetilde\alpha}\right] \right]  = \{0\}$.  By the definition of ${\frak m}_5$, we get the result. \end{proof}

From Propositon \ref{newprop1}, we see that ${\frak k}_1$ is also a subalgebra of $\fr{g}$.  In particular it is
 ${\frak k} = {\frak k}_1 \oplus {\frak h}_2 $, where $\fr{h}_2=\fr{su}(5)$, and for dimensional reasons we also obtain that ${\frak k}_1  = \frak{su}(5)$.

Since $\fr{h}\subset\fr{k}$, we determine a fibration $G/H\to G/K$, given by $\E_8/\U(1)\times \SU(4)\times \SU(5)\to \E_8/\SU(5)\times \SU(5)$.
The base space $G/K=\E_8/\SU(5)\times \SU(5)$ has two isotropy summands, namely $\fr{p}_1$ and $\fr{p}_2$. 
We consider  a Riemannian submersion $\pi : ( G/H, \,  g ) \to ( G/K, \, \check{g} )$ with totally geodesic fibers isometric to $( K/H, \, \hat{g} )$. 

Note that a $G$-invariant metric $\check{g}$ on $G/K=\E_8/\SU(5)\times \SU(5)$ is given by  
\begin{equation}\label{eq16}
\check{g}  =   y_1\cdot B|_{{\mbox{\footnotesize$ \frak p$}}_1} + y_2\cdot B|_{{\mbox{\footnotesize$ \frak p$}}_2}, \quad (y_1, y_2)\in\bb{R}_{+}^{2},
\end{equation} 
 a $G$-invariant metric $g$ on $G/H=\E_8/\U(1)\times \SU(4)\times \SU(5)$  is given by  
\begin{equation}\label{eq17}
{g}  =   y_1\cdot B|_{{\mbox{\footnotesize$ \frak p$}}_1} + y_2\cdot B|_{{\mbox{\footnotesize$ \frak p$}}_2} + z_1\cdot B|_{{\mbox{\footnotesize$ \frak q$}}_1} , \quad (y_1, y_2, z_1)\in\bb{R}_{+}^{3}
\end{equation}  and a $K$-invariant metric $\hat{g} $ on $K/H \simeq \SU(5)/\U(1)\times \SU(4)$  is given by  
\begin{equation}\label{eq18}
\hat{g}  =   z_1\cdot B|_{{\mbox{\footnotesize$ \frak q$}}_1} , \quad z_1\in\bb{R}_{+}.
\end{equation}
Notice that the metric (\ref{eq17}) can be written as  the metric of the form (\ref{metric001}): 
\begin{equation}\label{metrictotal}
 g = y_1 \cdot B|_{{\mbox{\footnotesize$ \frak m$}}_1} + y_2\cdot B|_{{\mbox{\footnotesize$ \frak m$}}_2} +y_2 \cdot B|_{{\mbox{\footnotesize$ \frak m$}}_3}
 + y_1\cdot B|_{{\mbox{\footnotesize$ \frak m$}}_4}
 + z_1\cdot B|_{{\mbox{\footnotesize$ \frak m$}}_5}. 
 \end{equation}
 From (\ref{eq14}) we obtain components $r_i$ of the Ricci tensor $r$ for the metric (\ref{metrictotal}) on $G/H$ as follows: 
 \begin{equation}\label{eq24}
\left\{\begin{array}{ll} 
r_1 &=  \displaystyle{\frac{1}{2y_1} -
\frac{1}{2\,d_1} {2 \brack 11} \frac{y_2}{{y_1}^2}} + \frac{1}{2\,d_1} {1 \brack 23}
\biggl(\frac{y_1}{{y_2}^2} - \frac{2}{y_1}\biggr) 
\displaystyle{ -  \frac{1}{2\,d_1} {1 \brack 34}
 \frac{y_2}{{y_1}^2}
-  \frac{1}{2\,d_1} {1 \brack 45}\frac{z_1}{{y_1 }^2}
}
\\ & \\
r_2 &=  \displaystyle{\frac{1}{2y_2} + 
\frac{1}{4\,d_2} {2 \brack 11}}\biggl( \frac{y_2}{{y_1}^2} - \frac{2}{y_2}\biggr)
 - \frac{1}{2\,d_2} {4 \brack 22}\frac{y_1}{{y_2}^2}
 \displaystyle{  -  \frac{1}{2\,d_2} {2 \brack 13}}
  \frac{y_1}{{y_2}^2}
\displaystyle{ - \frac{1}{2\,d_2} {2 \brack 35}
\frac{z_1}{{y_2}^2}
} 
\\ & \\
r_3 &=  \displaystyle{\frac{1}{2 y_2} -
\frac{1}{2\,d_3} {3 \brack 12}
 \frac{y_1}{{y_2}^2} 
 +  \frac{1}{2\,d_3} {3\brack 14}
\biggl(\frac{y_2}{y_1 y_1} -  \frac{2}{y_2}\biggr)}
 \displaystyle{-  \frac{1}{2\,d_3} {3 \brack 25}
\frac{z_1}{{y_2 }^2}}
\\ & \\
r_4 & = \displaystyle{\frac{1}{2 y_1} + 
\frac{1}{4\,d_4} {4 \brack 22}}\biggl( \frac{y_1}{{y_2}^2} - \frac{2}{y_1}\biggr)
 \displaystyle{  - \frac{1}{2\,d_4} {4 \brack 13}}
\frac{y_2}{{y_1}^2}
\displaystyle{ -  \frac{1}{2\,d_4} {4 \brack 15}
 \frac{z_1}{{y_1}^2}
} 
\\ & \\
r_5 & = \displaystyle{\frac{1}{2 z_1} +  
\frac{1}{2\,d_5} {5 \brack 23}}
\left(\frac{z_1}{{y_2 }^2} - \frac{2}{ z_1}
\right)  +
 \displaystyle{\frac{1}{2\,d_5} {5 \brack 14}
\left(\frac{z_1}{{y_1}^2} - \frac{2}{ z_1}
\right). } 
\end{array}
\right.
\end{equation}

Now  we put that 
${\check d}_1 = \dim {\frak p}_1 = 100$ and ${\check d}_2 = \dim {\frak p}_2 = 100$. Note that 
the components ${\check r}_{i}$ of the Ricci tensor ${\check r}$ of the $E_8$-invariant metric $\check{g}$ on $\E_8/K$ defined by  (\ref{eq16}),  are given as follows:  
\begin{equation}\label{eq181}
\left\{
\begin{array}{l} 
{\check r}_1 =  \displaystyle{\frac{1}{2{y}_1}+ \frac{y_1}{4\,{\check d}_1\,{y_2}^2} \left[\!\!{1 \brack 22}\!\!\right]  - \frac{1}{2\,{\check d}_1}
\biggl(
 \frac{{y}_2}{{{y}_1}^2} \left[\!\!{2 \brack 11}\!\!\right] + \frac{1}{{{y}_1}} \left[\!\!{2 \brack 12}\!\!\right]
 \biggr) }
 \\
 \\
{\check r}_2 =  \displaystyle{\frac{1}{2{y}_2} + \frac{y_2}{4\,{\check d}_2\,{y_1}^2} \left[\!\!{2 \brack 11}\!\!\right]   - \frac{1}{2\,{\check d}_2}
\biggl(
 \frac{{y}_1}{{{y}_2}^2} \left[\!\!{1 \brack 22}\!\!\right] + \frac{1}{{{y}_2}} \left[\!\!{1 \brack 12}\!\!\right]
 \biggr). }
\end{array}
\right. 
\end{equation} 
 
From Lemma \ref{submersion_ricci}, by taking the horizontal part of  $r_1$ and $r_4$,  and $r_2$ and $r_3$,  we see that 
 \begin{equation}\label{eee}
\left\{\begin{array}{ll} 
{\check r}_1  &=  \displaystyle{\frac{1}{2y_1} -
\frac{1}{2\,d_1} {2 \brack 11} \frac{y_2}{{y_1}^2}} + \frac{1}{2\,d_1} {1 \brack 23}
\biggl(\frac{y_1}{{y_2}^2} - \frac{2}{y_1}\biggr) 
\displaystyle{ -  \frac{1}{2\,d_1} {1 \brack 34}
 \frac{y_2}{{y_1}^2}
}
\\ & \\
& =  \displaystyle{\frac{1}{2 y_1} + 
\frac{1}{4\,d_4} {4 \brack 22}}\biggl( \frac{y_1}{{y_2}^2} - \frac{2}{y_1}\biggr)
 \displaystyle{  - \frac{1}{2\,d_4} {4 \brack 13}}
\frac{y_2}{{y_1}^2}
\\ & \\
{\check r}_2 &=  \displaystyle{\frac{1}{2y_2} + 
\frac{1}{4\,d_2} {2 \brack 11}}\biggl( \frac{y_2}{{y_1}^2} - \frac{2}{y_2}\biggr)
 - \frac{1}{2\,d_2} {4 \brack 22}\frac{y_1}{{y_2}^2}
 \displaystyle{  -  \frac{1}{2\,d_2} {2 \brack 13}
  \frac{y_1}{{y_2}^2} } 
\\ & \\
&=  \displaystyle{\frac{1}{2 y_2} -
\frac{1}{2\,d_3} {3 \brack 12}
 \frac{y_1}{{y_2}^2} 
 +  \frac{1}{2\,d_3} {3\brack 14}
\biggl(\frac{y_2}{{y_1}^2} -  \frac{2}{y_2}\biggr)}.
\end{array}
\right.
\end{equation}

Hence we conclude that the following equalities must hold:
 \begin{equation}\label{eqq19}
\left\{ 
\begin{array}{ll} 
 \displaystyle{\frac{1}{2\,{\check d}_1}  \left[\!\!{2\brack 12}\!\!\right]  
 =   \frac{1}{d_1} {3 \brack 12} 
 =   \frac{1}{2 d_4} {4 \brack 22} }
 ,&
  \displaystyle{ \frac{1}{2\,{\check d}_1}\left[\!\!{2 \brack 11}\!\!\right] =  \frac{1}{2 d_1} {2 \brack 11} + \frac{1}{2 d_1}{4 \brack 13} =    \frac{1}{2 d_4} {4 \brack 13  }} \\
  & \\
 \displaystyle{\frac{1}{2\,{\check d}_2}  \left[\!\!{2\brack 11}\!\!\right]  
 =   \frac{1}{ 2 d_2} {2 \brack 11} 
 =   \frac{1}{ d_3} {4 \brack 13} }
 ,&
  \displaystyle{ \frac{1}{2\,{\check d}_2}\left[\!\!{1 \brack 22}\!\!\right] =  \frac{1}{  d_2} {4 \brack 22} + \frac{1}{  d_2}{3 \brack 12} =    \frac{1}{ d_3} {3 \brack 12  }}
\end{array}
  \right\}.
\end{equation} 
  
\medskip
From  equations (\ref{eq15}) and  (\ref{eqq19}), we get  a system of equations: 
 \begin{equation}\label{eqq20}
 \left.\begin{array}{l|l} 
  \displaystyle{ 60 - 4 {2 \brack 11} - {3 \brack 12} - 3 {4 \brack 13} - 3 {5 \brack 14} + 2 {4 \brack 22} + {5 \brack 23} = 0}  & \displaystyle{ 4 + 2  {4 \brack 13} - 6 {5 \brack 14} +  {4 \brack 22} - 4  {5 \brack 23}} = 0
 \\ \, \,  \\ 
\displaystyle{ 20 +  {2 \brack 11} - 4 {2 \brack 12} + 2 {4 \brack 13} - 2 {4 \brack 22} = 0
 } & \displaystyle{  {2 \brack 11} -  3 {4 \brack 13}}  = 0 \\ \\
 \displaystyle{ 20 + 4 {2 \brack 11} - 10 {4 \brack 13} + 6 {5 \brack 14} - 3 {4 \brack 22} - 4 {5 \brack 23} = 0
 } & \displaystyle{ {3 \brack 12} -  2 {4 \brack 22} = 0.}  
\end{array}
\right\} 
\end{equation} 
 
\noindent By solving system  (\ref{eqq20}) we    obtain  explicitly the values of all non-zero triples of $G/H$.
\begin{prop}\label{structure}
For the $G$-invariant metric $( \ , \ )$ on $M=G/H=\E_8/\U(1)\times \SU(4)\times \SU(5)$, the non-zero  structure constants $\displaystyle {k \brack ij}$ are given by $\displaystyle{2 \brack 11} = 12$, $\displaystyle{3 \brack 12} = 8$, $\displaystyle{4 \brack 13} = 4$, $\displaystyle{5 \brack 14} =4/3$, $\displaystyle{4\brack 22} = 4$, and $\displaystyle{5 \brack 23} = 2$. 
\end{prop}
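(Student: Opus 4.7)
My plan is to derive a linear system in the six unknown triples $\displaystyle{2\brack 11}$, $\displaystyle{3\brack 12}$, $\displaystyle{4\brack 13}$, $\displaystyle{5\brack 14}$, $\displaystyle{4\brack 22}$, $\displaystyle{5\brack 23}$ from two independent sources of information, and then solve it. First, Proposition \ref{B-H} tells us that the diagonal metric with $(x_1,\ldots,x_5)=(1,2,3,4,5)$ is K\"ahler-Einstein, so substituting these values into the formulas (\ref{eq14}) and imposing the condition $r_1=r_2=r_3=r_4=r_5$ produces the four linear relations displayed in (\ref{eq15}), where I use the dimensions $d_1=80,\ d_2=60,\ d_3=40,\ d_4=20,\ d_5=8$ recorded in Section~4.2.

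Second, I exploit the Riemannian submersion $\pi : G/H \to G/K = \E_8/\SU(5)\times \SU(5)$ constructed in \S 4.2. The special submersion-type metric (\ref{metrictotal}) can be viewed simultaneously as an instance of (\ref{metric001}) on the total space and as arising from the base metric (\ref{eq16}) through Lemma \ref{submersion_ricci}. Thus the horizontal components of its Ricci tensor computed directly from Theorem \ref{ric1}, namely the pairs $(r_1,r_4)$ (sitting inside $\fr{p}_1=\fr{m}_1\oplus\fr{m}_4$) and $(r_2,r_3)$ (sitting inside $\fr{p}_2=\fr{m}_2\oplus\fr{m}_3$), must each coincide with the corresponding base Ricci component $\check r_j$ given by (\ref{eq181}). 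Equating the two expressions coefficient-by-coefficient (monomials in $y_1,y_2,z_1$) yields the chains of identities in (\ref{eqq19}). Within each chain, dropping the base-triple piece leaves an identity purely among $G/H$ triples. The four chains produce only two independent such identities, namely
\[
\displaystyle{2\brack 11}=3\cdot\displaystyle{4\brack 13}\qquad\text{and}\qquad \displaystyle{3\brack 12}=2\cdot\displaystyle{4\brack 22}.
\]

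Combining these two submersion identities with the four K\"ahler-Einstein equations gives exactly the $6\times 6$ linear system (\ref{eqq20}). Solving is then elementary: substituting ${2\brack 11}=3{4\brack 13}$ and ${3\brack 12}=2{4\brack 22}$ reduces the remaining four equations to a $4\times 4$ linear system in $\{{4\brack 13}, {4\brack 22}, {5\brack 14}, {5\brack 23}\}$, which can be solved by Gaussian elimination to yield the stated values; back-substitution recovers ${2\brack 11}$ and ${3\brack 12}$.

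The main obstacle I anticipate is not the algebra but the module-theoretic bookkeeping underlying the submersion picture. Before Lemma \ref{submersion_ricci} can be applied one must verify that $\fr{p}_1,\fr{p}_2$ are irreducible and pairwise inequivalent $\Ad(K)$-modules (so that (\ref{eq16}) really is the most general invariant metric on $G/K$), that $\fr{k}_1=\fr{h}_0\oplus\fr{h}_1\oplus\fr{m}_5$ is actually a subalgebra isomorphic to $\fr{su}(5)$ (this is where Proposition \ref{newprop1} is essential, since without the vanishing $[\fr{h}_2,\fr{m}_5]=0$ the direct sum would not close under the bracket), and that the partition $\fr{p}_1=\fr{m}_1\oplus\fr{m}_4$, $\fr{p}_2=\fr{m}_2\oplus\fr{m}_3$ is correct, which one can check using the inclusions (\ref{Liem}) as summarised by (\ref{n1n2}). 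Once these structural points are established, both the extraction of (\ref{eqq19}) from the horizontal part of $r_k$ and the final linear-algebra step are routine.
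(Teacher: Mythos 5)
Your proposal is correct and follows essentially the same route as the paper: four linear relations from the K\"ahler--Einstein metric $(1,2,3,4,5)$ via (\ref{eq15}), plus the two independent identities $\displaystyle{2\brack 11}=3{4\brack 13}$ and $\displaystyle{3\brack 12}=2{4\brack 22}$ extracted from the submersion chains (\ref{eqq19}), followed by linear elimination. Your reading of the submersion step (including the role of Proposition \ref{newprop1} and the redundancy among the four chains) matches the paper's argument exactly.
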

 \subsubsection{\bf Solutions of the homogeneous Einstein equation}
It is obvious that due to Proposition \ref{structure}, the components  $r_i$ $(1\leq i\leq 5)$ of the Ricci tensor are completely determined by equation  (\ref{eq14}).  Thus, a $G$-invariant metric on $G/H$ given by (\ref{metric001}), is an Einstein metric, if and only if it is a positive real solution of the  system of equations
$\label{einstein_equations} 
\Big\{r_1-r_2=0, \ \  r_2-r_3=0, \ \ r_3-r_4=0, \ \ r_4-r_5=0\Big\}$.
We normalize our equations by setting $x_1 =1$. 
   Then, we obtain the following system of polynomial equations: 
\begin{equation} \label{equ_E8_5comp}
\left\{
\begin{array}{l} 
f_1= -15 {x_2}^3 {x_3} {x_4} {x_5}-14
   {x_2}^3 {x_4} {x_5}-2 {x_2}^3 {x_4}-3
   {x_2}^2 {x_3}^2 {x_5}-{x_2}^2 {x_3}
   {x_4}^2+60 {x_2}^2 {x_3} {x_4}
   {x_5} \\
   +{x_2}^2 {x_3}
   -3 {x_2}^2 {x_4}^2
   {x_5}+3 {x_2}^2 {x_5}+2 {x_2} {x_3}^2
   {x_4} {x_5}+2 {x_2} {x_3}^2 {x_4}-{x_2}
   {x_5}^2 ({x_2} {x_3}-2 {x_4})\\ 
   -48 {x_2}
   {x_3} {x_4} {x_5}+14 {x_2} {x_4}
   {x_5}+4 {x_3} {x_4}^2 {x_5}=0,\\
f_2=    6 {x_2}^3 {x_3} {x_4} {x_5}+20 {x_2}^3 {x_4}
   {x_5}+5 {x_2}^3 {x_4}-6 {x_2}^2 {x_3}^2
   {x_5}+6 {x_2}^2 {x_4}^2 {x_5}-60 {x_2}^2
   {x_4} {x_5} +6 {x_2}^2 {x_5} \\
   -20 {x_2} {x_3}^2 {x_4} {x_5}-5 {x_2} {x_3}^2
   {x_4}+48 {x_2} {x_3} {x_4} {x_5}+{x_2}
   {x_4} {x_5}^2+4 {x_2} {x_4} {x_5}-4
   {x_3} {x_4}^2 {x_5}=0,\\
   f_3=    -12 {x_2}^3 {x_4}
   {x_5}-3 {x_2}^3 {x_4}+18 {x_2}^2 {x_3}^2
   {x_5}-4 {x_2}^2 {x_3} {x_4}^2-48 {x_2}^2
   {x_3} {x_5}+4 {x_2}^2 {x_3} \\
   -18 {x_2}^2{x_4}^2 {x_5} +60 {x_2}^2 {x_4} {x_5}+6
   {x_2}^2 {x_5}+12 {x_2} {x_3}^2 {x_4}
   {x_5}+3 {x_2} {x_3}^2 {x_4}+{x_2}
   {x_5}^2 (4 {x_2} {x_3}-3 {x_4}) \\
   -12 {x_2}
   {x_4} {x_5}-6 {x_3} {x_4}^2 {x_5}=0,\\
f_4=    15 {x_2}^3 {x_4}-12 {x_2}^2 {x_3}^2 {x_5}+14
   {x_2}^2 {x_3} {x_4}^2-60 {x_2}^2 {x_3}
   {x_4}+48 {x_2}^2 {x_3} {x_5}+6 {x_2}^2
   {x_3}+12 {x_2}^2 {x_4}^2 {x_5} \\
   -12 {x_2}^2
   {x_5}+15 {x_2} {x_3}^2 {x_4}-{x_2}
   {x_5}^2 (14 {x_2} {x_3}+15 {x_4})+6 {x_3}
   {x_4}^2 {x_5}=0
 \end{array}  
    \right. 
    \end{equation}  
    To find non-zero solutions of equations (\ref{equ_E8_5comp}),
    we consider a polynomial ring $R= {\mathbb Q}[y, x_2, x_3, x_4, x_5] $ and an ideal $I$ generated by 
$\{ f_1, \,f_2,  \,f_3, \, f_4,  \,y \, x_2  x_3  x_4  x_5 -1\}. 
$
We take a lexicographic order $>$  with $ y > x_2 >  x_3 > x_4 > x_5$ for a monomial ordering on $R$. Then, by using for example Mathematica, we see that a  Gr\"obner basis for the ideal $I$ contains the following polynomials:  $(x_5 - 5) \, h_1(x_5),$
where $h_{1}(x_{5})$ is a polynomial of   $x_{5}$ of degree $80$ with integer coefficients,
and polynomials of the form  
\begin{eqnarray} 
b_2 x_2 + v_2(x_5), \quad  b_3 x_3 + v_3(x_5), \quad b_4 x_4 + v_4(x_5)
\end{eqnarray}
where $b_2, b_3,  b_4$ are  integers  and $v_2(x_5), v_3(x_5), v_4(x_5)$ are polynomials  of  $x_5$ with degree 80 of integer coefficients. 
For the case when  $x_5 - 5 = 0$,  
      we consider  ideals $I_1$ of  the polynomial ring $R= {\mathbb Q}[y, x_2, x_3, x_4, x_5] $ generated by 
$
\{ f_1, \, f_2,  \, f_3,  \, f_4,  \,y,  \, x_2  x_3  x_4 x_5  -1,  {x_5}-5 \}$.
Then,  by taking a lexicographic order $>$  with $ y >  x_2  >  x_3 >  x_4  >  x_5$ for a monomial ordering on $R$, we obtain a 
 Gr\"obner basis for the ideals $I_1$ that contains polynomials  $\{ {x_2}-2,   {x_3}-3,   {x_4}-4,   {x_5}-5 \}$. This solution corresponds to  the K\"ahler Einstein metric. 
For the case $h_1(x_5)=0$, we see that there are 18 positive solutions for $x_5$. After substituting these values in the equations $b_2 x_2 + v_2(x_5) =0$,    $b_3 x_3 + v_3(x_5)=0$,$ b_4 x_4 + v_4(x_5)=0$, we see that there are 5 cases that all values for $ x_2$, $ x_3$ and $ x_4$ are positive. 

\smallskip

Thus we  get: 

\begin{prop}\label{ThmApre}
  The generalized flag manifold  $M=G/H=\E_8/\U(1)\times \SU(4)\times \SU(5)$   admits (up to a scale) precisely five    non-K\"ahler  $E_8$-invariant Einstein metrics.
These  $\E_8$-invariant Einstein metrics $g=(x_1, x_2, x_3, x_4, x_5)$ are given  approximately by
\[
\begin{tabular}{lllll}
   $(1) \  x_1 =1,$ &  $x_2 \approx  1.0213742,$ & $x_3  \approx 0.54600746,$ & $x_4  \approx 1.0535169$, & $ x_5   \approx  1.1087938,$\\ 
 $(2) \ x_1 = 1,$ &  $x_2 \approx  1.0373227,$ & $x_3  \approx 1.0471761,$ & $x_4  \approx 1.0308150,$ & $x_5   \approx  0.29861996,$\\
$(3) \  x_1 = 1,$ &  $x_2 \approx  0.59978523,$ & $x_3  \approx 1.0837088,$ & $x_4  \approx 0.90182312,$ & $x_5   \approx  1.2229122,$ \\
$(4) \ x_1 = 1,$ & $x_2 \approx  0.72071315,$ & $x_3  \approx 1.0254588,$ & $x_4  \approx 0.47523403,$ & $x_5   \approx  1.0709463,$\\
$(5) \ x_1 = 1,$ &   $x_2 \approx  1.0829413,$ & $x_3  \approx 1.0408835,$ & $x_4  \approx 0.53261506,$  & $x_5   \approx  1.1035115.$
\end{tabular}
\] 
and the Einstein constants $\lambda$ are given  by 
$$ (1) \ \lambda \approx 0.36550657,  \quad (2)\ \lambda \approx  0.33727144, \quad (3)\ \lambda \approx 0.37877040,  \quad (4)\ \lambda \approx 0.38698208, \quad (5)\ \lambda \approx 0.33939371. 
$$ 
\end{prop}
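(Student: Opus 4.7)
The plan is to substitute the structure constants computed in Proposition \ref{structure} into the Ricci components (\ref{eq14}), so that $r_1,\ldots,r_5$ become explicit rational functions of $x_1,\ldots,x_5$. The Einstein condition $\Ric_g = \lambda\,g$ for an invariant metric of the form (\ref{metric001}) is equivalent to $r_1=r_2=r_3=r_4=r_5$, giving four equations. After normalising by $x_1=1$ (permissible because the Einstein condition is scale invariant) and clearing denominators, I obtain the polynomial system (\ref{equ_E8_5comp}), consisting of four polynomials $f_1,f_2,f_3,f_4\in\mathbb{Z}[x_2,x_3,x_4,x_5]$.

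To extract \emph{all} positive real solutions I would adopt a Gr\"obner basis elimination. Introduce a slack variable $y$ and consider the ideal
\[
I \;=\; \langle f_1,\,f_2,\,f_3,\,f_4,\;y\,x_2 x_3 x_4 x_5 - 1\rangle \;\subset\; \mathbb{Q}[y,x_2,x_3,x_4,x_5],
\]
the last generator eliminating spurious solutions with some $x_j=0$. Computing a Gr\"obner basis for $I$ with respect to the lexicographic order $y>x_2>x_3>x_4>x_5$ (for instance with Mathematica or Singular) should yield an elimination polynomial in $x_5$ alone that factors as $(x_5-5)\,h_1(x_5)$, together with further generators of the form $b_j x_j + v_j(x_5)$ for $j=2,3,4$, where $b_j\in\mathbb{Z}$ and $v_j\in\mathbb{Z}[x_5]$. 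The linear factor $x_5-5$ accounts for the unique K\"ahler--Einstein metric $(1,2,3,4,5)$ predicted by Proposition \ref{B-H}; thus all non-K\"ahler Einstein metrics must satisfy $h_1(x_5)=0$.

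The remaining task is then numerical and combinatorial. Apply a reliable real-root isolator to $h_1$ to enumerate its positive real roots, expected to be 18 in number, and for each such root back-substitute through the linear relations $b_j x_j = -v_j(x_5)$ to recover the corresponding values of $x_2,x_3,x_4$. Retain only those candidates for which all three back-substituted values are strictly positive. The assertion is that exactly five candidates survive this sign test, yielding the tabulated approximate solutions, with the Einstein constants obtained by evaluating $r_1$ at each. Finally, since the five Einstein constants $\lambda$ listed are pairwise distinct (and distinct from the K\"ahler value), the metrics are pairwise non-homothetic, hence non-isometric after volume normalisation.

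The main obstacle is purely computational: the polynomial $h_1$ has degree $80$ with enormous integer coefficients, and the Gr\"obner basis computation for $I$ in characteristic zero is at the edge of what present-day computer algebra systems can handle. Intermediate coefficient swell during Buchberger's algorithm is the practical bottleneck, and one should verify the resulting basis both by recomputing in a different monomial order and by substituting each approximate solution back into the original system (\ref{equ_E8_5comp}) to confirm vanishing within the expected numerical tolerance. Once the 18 positive roots of $h_1$ have been isolated and tested, the counting argument above concludes the proof.
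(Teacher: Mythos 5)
Your proposal follows essentially the same route as the paper: the same normalisation $x_1=1$, the same ideal $I=\langle f_1,f_2,f_3,f_4,\,y\,x_2x_3x_4x_5-1\rangle$ with a lexicographic Gr\"obner basis yielding the factorisation $(x_5-5)h_1(x_5)$ with $\deg h_1=80$, the same identification of $x_5=5$ with the K\"ahler--Einstein metric, and the same count of $18$ positive roots of $h_1$ of which exactly $5$ give positive $x_2,x_3,x_4$. The only minor caveat is that pairwise distinctness of the Einstein constants at the normalisation $x_1=1$ is not by itself a homothety invariant; the paper instead distinguishes the metrics via the scale invariant $H_g=V_g^{1/d}S_g$, though for the proposition as stated (counting up to scale) the distinctness of the normalised tuples already suffices.
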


For any $G$-invariant Einstein metric $g = (x_1, x_2, x_3, x_4, x_5)$ on $M=G/H$, we
consider the scale invariant given by $H_g = {V_g}^{1/d }S_g$, where  $\displaystyle d = \sum^5_{i=1} d_i$, $S_g$ is the scalar curvature of $g$
and $V_g$ is the volume $\displaystyle V_g = \prod^5_{i=1} {x_i}^{d_i}$ of the given metric $g$ (cf. \cite{Chry3}).   We compute the  scale invariant  $H_g$ for  invariant Einstein metrics above and we see that 
$$ (1) \ H_g \approx 68.7023,  \  (2)\ H_g \approx 68.4799, \  (3)\ H_g \approx 68.8906,  \ (4)\ H_g \approx 68.6914, \ (5)\ H_g \approx 68.7757$$
respectively. Thus we conclude that these invariant Einstein metrics can not be isometric each other.

By normalizing  Einstein constant  $\lambda =1$, we obtain: 
\begin{theorem}\label{ThmA}
  The generalized flag manifold  $M=G/H=\E_8/\U(1)\times \SU(4)\times \SU(5)$   admits  precisely five    non-K\"ahler  $E_8$-invariant Einstein metrics up to isometry. 
These  $\E_8$-invariant Einstein metrics $g=(x_1, x_2, x_3, x_4, x_5)$ are given  approximately by
\[
\begin{tabular}{lllll}
  $(1) \  x_1  \approx 0.36550657,$ &  $x_2 \approx  0.37331898, $ & $x_3  \approx  0.19956931, $ & $x_4  \approx 0.38506736$, & $ x_5   \approx  0.40527143,$\\
 $(2) \ x_1  \approx 0.33727144,$ &  $x_2 \approx   0.34985931, $ & $x_3  \approx 0.35318260,$ & $x_4  \approx 0.34766447,$ & $x_5   \approx  0.10071598,$\\
$(3) \  x_1 \approx 0.37877040,$ &  $x_2 \approx  0.22718089,$ & $x_3  \approx 0.41047683,$ & $x_4  \approx 0.34158391,$ & $x_5   \approx  0.46320296,$ \\
$(4) \ x_1  \approx 0.38698208,$ & $x_2 \approx  0.27890308,$ & $x_3  \approx 0.39683418,$ & $x_4  \approx 0.18390705,$ & $x_5   \approx  0.41443703,$\\
$(5) \ x_1  \approx 0.33939371,$ &   $x_2 \approx  0.36754348,$ & $x_3  \approx 0.35326931,$ & $x_4  \approx 0.18076620,$  & $x_5   \approx  0.37452488.$
\end{tabular}
\] 
\end{theorem}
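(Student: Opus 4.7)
The plan is to turn the classification of Einstein metrics into a finite algebraic problem and then exploit Gröbner bases together with the isometry invariant $H_g$ to separate solutions.

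First I would substitute the explicit structure constants from Proposition \ref{structure} into the Ricci components \eqref{eq14}. This gives five explicit rational functions $r_1,\dots,r_5$ of $(x_1,\dots,x_5)$. The metric \eqref{metric001} is Einstein precisely when $r_1=r_2=r_3=r_4=r_5$, so I would write down the four equations $r_i-r_{i+1}=0$ ($i=1,2,3,4$). Since the Einstein condition is scale invariant, the normalization $x_1=1$ costs nothing and reduces the unknowns to $(x_2,x_3,x_4,x_5)$. Clearing denominators transforms the system into the four polynomial equations displayed in \eqref{equ_E8_5comp}.

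Next I would pass to the polynomial ring $R=\mathbb{Q}[y,x_2,x_3,x_4,x_5]$ and introduce the ideal $I=(f_1,f_2,f_3,f_4,\,yx_2x_3x_4x_5-1)$; the auxiliary generator $yx_2x_3x_4x_5-1$ forces all $x_i$ to be nonzero, so that only genuine solutions of the Einstein system survive. Using a lexicographic monomial order with $y>x_2>x_3>x_4>x_5$, I would compute a Gröbner basis of $I$. The expected structure is a polynomial in $x_5$ alone (which will factor as $(x_5-5)\,h_1(x_5)$, the factor $(x_5-5)$ carrying the unique Kähler--Einstein solution $(1,2,3,4,5)$ guaranteed by Proposition \ref{B-H}), together with polynomials linear in each of $x_2,x_3,x_4$ whose coefficients are polynomials in $x_5$. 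This reduces the classification to the analysis of the univariate polynomial $h_1(x_5)$.

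Then I would analyze $h_1(x_5)=0$ numerically: isolate its real positive roots, substitute each into the linear-in-$x_2,x_3,x_4$ relations to back-solve for the remaining coordinates, and keep only those roots for which $x_2,x_3,x_4$ are also positive. I expect exactly five such non-Kähler positive tuples, matching the list in Theorem~\ref{ThmA}. For each I would compute the common Einstein constant $\lambda$ from, say, $r_1$, and rescale to $\lambda=1$ to produce the final numerical table.

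Finally, to prove the five metrics are pairwise non-isometric, I would compute the scale invariant $H_g=V_g^{1/d}S_g$ with $d=\sum_{i=1}^5 d_i=208$ for each solution; distinct numerical values of $H_g$ imply non-isometry since isometric homogeneous metrics have identical scalar curvature and volume functionals. The main obstacle is algebraic: the univariate polynomial $h_1(x_5)$ has degree $80$ with large integer coefficients, so no closed-form factorization is available. One must rely on validated numerical root-finding (for instance the polyhedral homotopy method of HOM4PS-2.0 mentioned in the introduction, or interval arithmetic applied to $h_1$) to certify that exactly five real positive roots yield all-positive $(x_2,x_3,x_4)$ and that the resulting values of $H_g$ are distinct. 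All other steps are a finite Gröbner basis calculation and verification.
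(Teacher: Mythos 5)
Your proposal follows the paper's own argument essentially step for step: substituting the structure constants of Proposition \ref{structure} into \eqref{eq14}, normalizing $x_1=1$ to get the system \eqref{equ_E8_5comp}, adjoining $yx_2x_3x_4x_5-1$ and computing a lex Gr\"obner basis to obtain $(x_5-5)h_1(x_5)$ with $h_1$ of degree $80$ plus relations linear in $x_2,x_3,x_4$, back-solving at the positive roots of $h_1$ to isolate the five all-positive non-K\"ahler solutions, and separating them by the scale invariant $H_g$. This is exactly the paper's proof, so no further comparison is needed.
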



\markboth{Ioannis Chrysikos and Yusuke Sakane}{On the classification of homogeneous Einstein metrics on   generalized flag manifolds with $b_2(M)=1$}
\section{Homogeneous Einstein metrics on $\E_8/\U(1)\times \SU(2)\times \SU(3)\times \SU(5)$}

\subsection{The construction of the homogeneous Einstein equation on   $\bold{\E_8/\U(1)\times \SU(2)\times \SU(3)\times \SU(5)}$}\label{MAINSEC} 
We will exam now the case (F). We consider again the Lie group  $G=\E_8$ and we  set $\Pi_{M}=\{\al_5\}$ and $\Pi_{0}=\{\al_1, \al_2, \al_3, \al_4, \al_6, \al_7, \al_8\}$.  This choice gives rise to the following (extended) painted Dynkin diagram 
\smallskip
 \[
\begin{picture}(160,25)(-5, -10)
\put(-3,9.5){\circle{6}}
\put(-3,9.5){\circle{3}}
\put(-0, 10){\line(1,0){13.3}}
\put(15, 9.5){\circle{4 }}
\put(15, 18){\makebox(0,0){$\al_1$}}
\put(17, 10){\line(1,0){14}}
\put(33.5, 9.5){\circle{4 }}
\put(33.5, 18){\makebox(0,0){$\al_2$}}
\put(35, 10){\line(1,0){13.6}}
 \put(51, 9.5){\circle{4 }}
 \put(51, 18){\makebox(0,0){$\al_3$}}
\put(53,10){\line(1,0){14}}
\put(69,9.5){\circle{4 }}
\put(69, 18){\makebox(0,0){$\al_4$}}
\put(89,-8){\circle{4}}
\put(99, -9.5){\makebox(0,0){$\al_8$}}
\put(89,-6){\line(0,1){14}}
\put(71,10){\line(1,0){16}}
\put(89,9.5){\circle*{4 }}
\put(89, 18){\makebox(0,0){$\al_5$}}
\put(90.7,10){\line(1,0){16}}
\put(109,9.5){\circle{4 }}
\put(109, 18){\makebox(0,0){$\al_6$}}
\put(111,10){\line(1,0){16}}
\put(129,9.5){\circle{4 }}
\put(129, 18){\makebox(0,0){$\al_7$}}
\end{picture}
 \]
   It defines the flag manifold  $M=G/H=\E_8/\U(1)\times \SU(2)\times \SU(3)\times \SU(5)$.   From Lemma \ref{mk} and since we have  $\Hgt(\al_5)=6$, it follows that $N=6=q$, that is  $\frak m={\frak m}_1 \oplus {\frak m}_2 \oplus {\frak m}_3 \oplus {\frak m}_4 \oplus {\frak m}_5\oplus\fr{m}_6$. Thus we can find a pair $(\Pi, \Pi_{0} )$ for $\fr{g}=\fr{e}_{8}$, which has  an irreducible decomposition  
${\frak g}  = {\frak h}_0
\oplus {\frak h}_1 \oplus  {\frak h}_2 \oplus \fr{h}_3\oplus{\frak m}_1 \oplus 
{\frak m}_2  \oplus {\frak m}^{}_3 \oplus {\frak m}^{}_4 \oplus {\frak m}^{}_5\oplus\fr{m}_6$
 as $\mbox{Ad}(H)$-modules, where ${\frak h}_0$ is the center of ${\frak h}$ and ${\frak h}_1 = \frak{su}(2)$,  ${\frak h}_2 = \frak{su}(3)$, $\fr{h}_3=\fr{su}(5)$.    Note that $d_{0}=\dim\fr{h}_{0}=1$,  $d_1 = \dim{\frak h}^{}_1=3$, $ d_2 = \dim {\frak h}^{}_2=8$ and $d_3=\dim\fr{h}_3=24$.  Also from Lemma \ref{mk}, we obtain thet $d_4 = \dim {\frak m}^{}_1 = 60$, $ d_5 = \dim {\frak m}^{}_2= 60$, $ d_6 = \dim {\frak m}^{}_3 = 40$, $ d_7 = \dim {\frak m}^{}_4 = 30$, $ d_8 = \dim {\frak m}^{}_5 = 12$ and $d_9=\dim\fr{m}_6=10$. 
\begin{prop} \label{newprop2} 
In the  decomposition ${\frak g}  = {\frak h}_0
\oplus {\frak h}_1 \oplus  {\frak h}_2 \oplus \fr{h}_3\oplus{\frak m}_1 \oplus 
{\frak m}_2  \oplus {\frak m}^{}_3 \oplus {\frak m}^{}_4 \oplus {\frak m}^{}_5\oplus\fr{m}_6$ 
the  ideals $\fr{h}_1$ and $\fr{h}_2$ can be taken such that 
    $\left[\fr{h}_1, \fr{m}_6\right]=\left[{\frak h}_2, {\frak m}_6\right] = \{0\}$.  
\end{prop}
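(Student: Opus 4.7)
The plan is to follow the template of Proposition~\ref{newprop1} verbatim: first establish the ``base'' vanishing $[\fr{h}_j^{\mathbb C}, E_{\widetilde\alpha}] = \{0\}$ for $j = 1, 2$, next upgrade it to $[\fr{h}_j^{\mathbb C}, \fr{n}_6] = \{0\}$ via the Jacobi identity together with Proposition~\ref{Burs-Rawn} (which gives $\fr{n}_6 = \fr{z}$ as the $\ad(\fr{h}^{\mathbb C})$-module generated by $E_{\widetilde\alpha}$), and then descend to the real form to conclude $[\fr{h}_j, \fr{m}_6] = \{0\}$.

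The key combinatorial input is read off the extended Dynkin diagram of $\E_8$ drawn above: the affine node $-\widetilde\alpha$ attaches only to $\alpha_1$, so $(\widetilde\alpha, \alpha_i) = 0$ for every $\alpha_i \in \Pi \setminus \{\alpha_1\}$. Removing the black node $\alpha_5$ identifies $\fr{h}_1 = \fr{su}(2)$ with the subdiagram $\{\alpha_8\}$, $\fr{h}_2 = \fr{su}(3)$ with $\{\alpha_6, \alpha_7\}$, and $\fr{h}_3 = \fr{su}(5)$ with $\{\alpha_1, \alpha_2, \alpha_3, \alpha_4\}$. In particular, every simple root belonging to $\fr{h}_1$ or to $\fr{h}_2$ is orthogonal to $\widetilde\alpha$, since the exceptional root $\alpha_1$ lies in $\fr{h}_3$.

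For the base vanishing, I would check the three families of generators of $\fr{h}_j^{\mathbb C}$ ($j=1,2$) on $E_{\widetilde\alpha}$. Cartan elements $H_{\alpha_i}$ act trivially because $\widetilde\alpha(H_{\alpha_i}) = 2(\widetilde\alpha, \alpha_i)/(\alpha_i, \alpha_i) = 0$. Positive root vectors $E_\beta$ with $\beta \in \Delta_{\fr{h}_j}^+$ annihilate $E_{\widetilde\alpha}$ since $\widetilde\alpha + \beta$ is not a root by maximality. For negative root vectors $E_{-\beta}$, the $\beta$-string-through-$\widetilde\alpha$ argument shows $\widetilde\alpha - \beta$ is a root iff $(\widetilde\alpha, \beta) > 0$, and here $(\widetilde\alpha, \beta) = 0$ by linearity, since $\beta$ is a $\mathbb{Z}_{\geq 0}$-combination of simple roots of $\fr{h}_j$, each orthogonal to $\widetilde\alpha$. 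The Jacobi step itself, namely $[\fr{h}_j^{\mathbb C}, [\fr{h}^{\mathbb C}, X]] \subset [[\fr{h}_j^{\mathbb C}, \fr{h}^{\mathbb C}], X] + [\fr{h}^{\mathbb C}, [\fr{h}_j^{\mathbb C}, X]]$ combined with the ideal property $[\fr{h}_j^{\mathbb C}, \fr{h}^{\mathbb C}] \subset \fr{h}_j^{\mathbb C}$, then propagates the vanishing from $E_{\widetilde\alpha}$ to the entire $\ad(\fr{h}^{\mathbb C})$-orbit, hence to all of $\fr{n}_6$, and by passing to the real form we conclude $[\fr{h}_1, \fr{m}_6] = [\fr{h}_2, \fr{m}_6] = \{0\}$.

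I do not anticipate a serious obstacle, since everything reduces to routine checks once the orthogonalities $(\widetilde\alpha, \alpha_i) = 0$ ($i \neq 1$) are in hand, and the Jacobi bookkeeping has already been demonstrated in Proposition~\ref{newprop1}. The only mildly delicate point is the negative-root case in the base step, where the root-string argument is needed; but the linearity of the pairing $(\widetilde\alpha, \cdot)$ reduces it immediately to the simple-root orthogonalities visible on the extended Dynkin diagram.
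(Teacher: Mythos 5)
Your argument is correct and follows essentially the same route as the paper's own proof: orthogonality of the simple roots generating $\fr{h}_1$ and $\fr{h}_2$ to $\widetilde\alpha$ gives $[\fr{h}_j^{\mathbb C},E_{\widetilde\alpha}]=\{0\}$, the Jacobi identity together with $\fr{n}_6=[\fr{h}^{\mathbb C},E_{\widetilde\alpha}]$ propagates this to all of $\fr{n}_6$, and passing to the real form yields $[\fr{h}_j,\fr{m}_6]=\{0\}$ — you merely spell out the Cartan/positive-root/root-string verification that the paper leaves implicit. One remark: your identification of $\alpha_1$ as the unique simple root with $(\widetilde\alpha,\alpha_1)\neq 0$ is the one consistent with the extended Dynkin diagram and the marks $(2,3,4,5,6,4,2,3)$, whereas the paper's text names $\alpha_8$ at this point; that appears to be a typo, and your version is the one that actually makes the argument work, since $\alpha_8$ spans $\fr{h}_1$ and must therefore be orthogonal to $\widetilde\alpha$.
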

\begin{proof} Since $\fr{h}_1=\fr{su}(2)$, and $\fr{h}_2=\fr{su}(3)$, we can assume that $\fr{h}_1\neq\{0\}$ and $\fr{h}_2\neq\{0\}$.  Note that there is only a simple root $\alpha_{j_0}=\alpha_{8}$  with $( \alpha_{j_0}, \widetilde\alpha ) \neq 0$ and thus we can take  
the ideals  ${\frak h}_1$ and $\fr{h}_2$ such  that  $\left[{\frak h}_1^{\mathbb C}, E_{\widetilde\alpha} \right] = \left[{\frak h}_2^{\mathbb C}, E_{\widetilde\alpha} \right] = \{0\}$.  Since ${\frak n}_6 = [ {\frak h}_{}^{\mathbb C}, E_{\widetilde\alpha}]$, we have  that $\left[{\frak h}_1^{\mathbb C}, {\frak n}_6 \right] = \left[{\frak h}_1^{\mathbb C}, [ {\frak h}_{}^{\mathbb C}, E_{\widetilde\alpha}] \right]  \subset  \left[\left[{\frak h}_1^{\mathbb C},  {\frak h}_{}^{\mathbb C}\right],  E_{\widetilde\alpha} \right] +  \left[{\frak h}_{}^{\mathbb C},  \left[ {\frak h}_{1}^{\mathbb C},  E_{\widetilde\alpha}\right] \right]  = \{0\}$.  By the definition of ${\frak m}_6$, we get the result.  Similar for $\fr{h}_2$.
\end{proof}
Now, we consider an $\E_8$-invariant Riemannian metric $ ( \ \ , \ \ )$ on $G/H=\E_8/\U(1)\times \SU(2)\times \SU(3)\times \SU(5)$ given by 
\begin{equation}\label{metric002}
 ( \ \ , \ \ ) = x^{}_1 \cdot B|_{{\mbox{\footnotesize$ \frak m$}}_1} + x^{}_2 \cdot B|_{{\mbox{\footnotesize$ \frak m$}}_2} + x^{}_3 \cdot B|_{{\mbox{\footnotesize$ \frak m$}}_3}
 + x^{}_4 \cdot B|_{{\mbox{\footnotesize$ \frak m$}}_4}
 + x^{}_5 \cdot B|_{{\mbox{\footnotesize$ \frak m$}}_5} + x^{}_6 \cdot B|_{{\mbox{\footnotesize$ \frak m$}}_6}, \quad (x_1, x_2, x_3, x_4, x_5, x_6)\in\bb{R}_{+}^{6}. 
\end{equation}
 \begin{prop}\label{MAINRIC2}
The components ${r}_{i}$ of  the Ricci tensor $r$  
for the $G$-invariant metric $( \ , \ )$ on  $G/H=\E_8/\U(1)\times \SU(2)\times \SU(3)\times \SU(5)$
defined by (\ref{metric002}),
 are given as follows:
{\small{
\begin{equation}\label{eq25}
\left\{\begin{array}{ll} 
r_1 &=  \displaystyle{\frac{1}{2x_1} -
\frac{1}{2\,d_1} {2 \brack 11} \frac{x_2}{{x_1}^2}} + \frac{1}{2\,d_1} {1 \brack 23}
\biggl(\frac{x_1}{x_2 x_3} - \frac{x_2}{x_1 x_3}- \frac{x_3}{x_1 x_2}
\biggr) +  \frac{1}{2\,d_1} {1 \brack 34}
\biggl(\frac{x_1}{x_3 x_4} - \frac{x_3}{x_1 x_4}- \frac{x_4}{x_1 x_3}
\biggr) \\ & 
\displaystyle{  +  \frac{1}{2\,d_1} {1 \brack 45}
\biggl(\frac{x_1}{x_4 x_5} - \frac{x_4}{x_1 x_5}- \frac{x_5}{x_1 x_4}
\biggr) + \frac{1}{2\,d_1} {1 \brack 56}\biggl(\frac{x_1}{x_5 x_6} - \frac{x_5}{x_1 x_6}- \frac{x_6}{x_1 x_5}
\biggr)}
\\ & \\
r_2 &=  \displaystyle{\frac{1}{2x_2} + 
\frac{1}{4\,d_2} {2 \brack 11}\biggl( \frac{x_2}{{x_1}^2} - \frac{2}{x_2}\biggr)
 - \frac{1}{2\,d_2} {4 \brack 22}\frac{x_4}{{x_2}^2}   +  \frac{1}{2\,d_2} {2 \brack 13}
\biggl(\frac{x_2}{x_1 x_3} - \frac{x_1}{x_2 x_3}- \frac{x_3}{x_2 x_1}
\biggr)} \\ & 
\displaystyle{ +  \frac{1}{2\,d_2} {2 \brack 35}
\biggl(\frac{x_2}{x_3 x_5} - \frac{x_3}{x_2 x_5}- \frac{x_5}{x_2 x_3}
\biggr)+  \frac{1}{2\,d_2} {2 \brack 46}
\biggl(\frac{x_2}{x_4 x_6} - \frac{x_4}{x_2 x_6}- \frac{x_6}{x_2 x_4}
\biggr)} 
\\ & \\
r_3 &= \displaystyle{\frac{1}{2 x_3}} - \displaystyle{\frac{1}{2\,d_3} {6 \brack 33}\frac{x_6}{{x_3}^{2}}+}
\frac{1}{2\,d_3} {3 \brack 12}
\biggl(\frac{x_3}{x_1 x_2} - \frac{x_2}{x_3 x_1}- \frac{x_1}{x_3 x_2} \biggr) 
 +  \frac{1}{2\,d_3} {3\brack 14}
\biggl(\frac{x_3}{x_1 x_4} - \frac{x_1}{x_3 x_4}- \frac{x_4}{x_1 x_3}
\biggr) \\ & 
 \displaystyle{+  \frac{1}{2\,d_3} {3 \brack 25}
\biggl(\frac{x_3}{x_2 x_5} - \frac{x_2}{x_3 x_5}- \frac{x_5}{x_3 x_2}
\biggr)}
\\ & \\
r_4 & = \displaystyle{\frac{1}{2 x_4} + 
\frac{1}{4\,d_4} {4 \brack 22}}\biggl( \frac{x_4}{{x_2}^2} - \frac{2}{x_4}\biggr)
 \displaystyle{  +  \frac{1}{2\,d_4} {4 \brack 13}}
\biggl(\frac{x_4}{x_1 x_3} - \frac{x_1}{x_3 x_4}- \frac{x_3}{x_4 x_1}
\biggr)  \\ & 
\displaystyle{ +  \frac{1}{2\,d_4} {4 \brack 15}
\biggl(\frac{x_4}{x_1 x_5} - \frac{x_1}{x_4 x_5}- \frac{x_5}{x_1 x_4}
\biggr)+  \frac{1}{2\,d_4} {4 \brack 26}
\biggl(\frac{x_4}{x_2 x_6} - \frac{x_2}{x_4 x_6}- \frac{x_6}{x_2 x_4}
\biggr)}
\\ & \\
r_5 & = \displaystyle{\frac{1}{2 x_5} +  
\frac{1}{2\,d_5} {5 \brack 14}} 
\left(\frac{x_5}{x_1 x_4} - \frac{x_1}{x_4 x_5}- \frac{x_4}{x_1 x_5}
\right)+\displaystyle{\frac{1}{2\,d_5} {5 \brack23}}
\left(\frac{x_5}{x_2 x_3} - \frac{x_2}{x_3 x_5}- \frac{x_3}{x_2 x_5}
\right) \\ & +
  \displaystyle{\frac{1}{2\,d_5} {5 \brack 16}}
\left(\frac{x_5}{x_1 x_6} - \frac{x_1}{x_5 x_6}- \frac{x_6}{x_1 x_5}
\right) 
\\ & \\
r_6 & =\displaystyle{\frac{1}{2 x_6} + 
\frac{1}{4\,d_6} {6 \brack 33}}\biggl( \frac{x_6}{{x_3}^2} - \frac{2}{x_6}\biggr)
 \displaystyle{  +  \frac{1}{2\,d_6} {6 \brack 15}}
\biggl(\frac{x_6}{x_1 x_5} - \frac{x_1}{x_5 x_6}- \frac{x_5}{x_1 x_6}
\biggr)  \\ & 
\displaystyle{ +  \frac{1}{2\,d_6} {6 \brack 24}
\biggl(\frac{x_6}{x_2 x_4} - \frac{x_2}{x_4 x_6}- \frac{x_4}{x_2 x_6}
\biggr)} .
\end{array}
\right.
\end{equation}
}}
\end{prop}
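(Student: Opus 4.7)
The plan is to apply the Ricci-tensor formula of Theorem \ref{ric1} directly, using the bracket inclusions (\ref{Liem}) to identify which structure constants $\displaystyle{k\brack ij}$ are a priori zero.

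First I would enumerate the non-zero triples. The inclusions $[\fr{m}_i,\fr{m}_i]\subset\fr{h}+\fr{m}_{2i}$ and $[\fr{m}_i,\fr{m}_j]\subset\fr{m}_{i+j}+\fr{m}_{|i-j|}$ (for $i\neq j$) force $\displaystyle{k\brack ij}$ to vanish unless the unordered multiset $\{i,j,k\}\subset\{1,\ldots,6\}$ satisfies $k\in\{i+j,|i-j|\}$ for some labelling. A straightforward enumeration yields precisely nine such multisets,
\[
\{1,1,2\},\ \{1,2,3\},\ \{1,3,4\},\ \{1,4,5\},\ \{1,5,6\},\ \{2,2,4\},\ \{2,3,5\},\ \{2,4,6\},\ \{3,3,6\},
\]
corresponding respectively to $\displaystyle{2\brack 11}$, ${3\brack 12}$, ${4\brack 13}$, ${5\brack 14}$, ${6\brack 15}$, ${4\brack 22}$, ${5\brack 23}$, ${6\brack 24}$, ${6\brack 33}$. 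This matches the count of nine non-zero structure constants mentioned in the introduction.

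Next, for each fixed $k\in\{1,\ldots,6\}$ I would expand the double sums in Theorem \ref{ric1} by exploiting the full symmetry $c_{ij}^k=c_{ji}^k=c_{ki}^j$ and retaining only those index patterns whose multiset $\{i,j,k\}$ appears in the list above. Three shapes of contribution arise: for a triangle multiset $\{i,j,k\}$ with distinct entries containing $k$, one obtains the term $\frac{1}{2d_k}{k\brack ij}\bigl(\frac{x_k}{x_ix_j}-\frac{x_i}{x_jx_k}-\frac{x_j}{x_ix_k}\bigr)$; for a multiset of the form $\{k,k,m\}$ (so $k$ appears twice), one obtains the single term $-\frac{1}{2d_k}{m\brack kk}\frac{x_m}{x_k^2}$; and for a multiset of the form $\{m,m,k\}$ (so $k$ appears once and $m$ twice), one obtains $\frac{1}{4d_k}{k\brack mm}\bigl(\frac{x_k}{x_m^2}-\frac{2}{x_k}\bigr)$. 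Adding the universal contribution $\frac{1}{2x_k}$ then produces the four or five terms listed in each row of (\ref{eq25}).

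The main obstacle is bookkeeping rather than any new idea: one must carefully track the combinatorial multiplicities giving the prefactors $\frac{1}{2d_k}$ versus $\frac{1}{4d_k}$, and verify the signs inside the parentheses of the ``triangle'' expressions. No further structural input is required at this stage; Proposition \ref{newprop2} plays no role here, since it concerns the $\Ad(H)$-action on $\fr{m}_6$ rather than the $\fr{m}$-triples themselves, and will enter only in the next step when the actual numerical values of the nine structure constants are extracted via the Riemannian submersion $G/H\to G/K$.
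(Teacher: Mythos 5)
Your proposal is correct and follows exactly the route the paper takes (the paper itself gives no more than "apply Theorem \ref{ric1}" for the analogous Proposition in case (E)): the inclusions (\ref{Liem}) reduce the possibly non-zero triples to the nine multisets $\{i,j,k\}$ with one index equal to the sum of the other two, and your three "shapes" of contribution — $\frac{1}{2d_k}{k\brack ij}\bigl(\frac{x_k}{x_ix_j}-\frac{x_i}{x_jx_k}-\frac{x_j}{x_ix_k}\bigr)$ for a triangle, $-\frac{1}{2d_k}{m\brack kk}\frac{x_m}{x_k^2}$ for $\{k,k,m\}$, and $\frac{1}{4d_k}{k\brack mm}\bigl(\frac{x_k}{x_m^2}-\frac{2}{x_k}\bigr)$ for $\{m,m,k\}$ — are precisely what Theorem \ref{ric1} yields after using the symmetry $c^k_{ij}=c^j_{ki}$, reproducing every term of (\ref{eq25}). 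Your remark that Proposition \ref{newprop2} is not needed at this stage is also accurate.
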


From Proposition \ref{B-H}, we known that the unique $\E_8$-invariant K\"ahler-Einstein metric on $G/H$ is given by  
$ B|_{\mbox{\footnotesize$ \frak m$}_1} + 2 \cdot B|_{\mbox{\footnotesize$ \frak m$}_2} + 3\cdot B|_{\mbox{\footnotesize$ \frak m$}_3} + 4\cdot B|_{\mbox{\footnotesize$ \frak m$}_4} + 5\cdot B|_{\mbox{\footnotesize$ \frak m$}_5}+ 6\cdot B|_{\mbox{\footnotesize$ \frak m$}_6}$. We use these parameters to obtain the following equations: 
{\small{\begin{eqnarray} \label{eq34} 
 & &  \frac{1}{2} - \frac{1}{d_1}\biggl( {2 \brack 11} +  {3 \brack 12} + {4 \brack 13} + {5 \brack 14}+ {6 \brack 15}  \biggr)   =    \frac{1}{4} + \frac{1}{d_2}\biggl(\frac{1}{4} {2 \brack 11} -\frac{1}{2}  {3 \brack 12} -\frac{1}{2}   {4 \brack 22} -\frac{1}{2}   {5 \brack 23} -\frac{1}{2}   {6 \brack 24} \biggr) \nonumber \\
&  =  &  \frac{1}{6} + \frac{1}{d_3}\biggl( \frac{1}{3} {3 \brack 12} -\frac{1}{3}  {4 \brack 13} -\frac{1}{3}  {5 \brack 23}-\frac{1}{3}  {6 \brack 33}   \biggr)  =  \frac{1}{8} + \frac{1}{d_4}\biggl(\frac{1}{4}  {4 \brack 13} -\frac{1}{4}   {5 \brack 14} + \frac{1}{8} {4 \brack 22} -\frac{1}{4}   {6 \brack 24} \biggr)    \\
& = & \frac{1}{10} + \frac{1}{d_5}\biggl( \frac{1}{5} {5\brack 14} -\frac{1}{5}  {6 \brack 15}  +\frac{1}{5} {5 \brack 23}  \biggr) \nonumber  =   \frac{1}{12} + \frac{1}{d_6}\biggl( \frac{1}{6} {6\brack 15} + \frac{1}{6}  {6 \brack 24}  +\frac{1}{12} {6 \brack 33}  \biggr).   \nonumber
           \end{eqnarray}  }}

\subsection{ Use of submersion}
From equations (\ref{eq34}) we obtain a system with five equations and nine unknowns, namely the triples 
\[
\displaystyle {2 \brack 11}, \ \displaystyle{3 \brack 12}, \ \displaystyle {4 \brack 12},  \ \displaystyle {5 \brack 14},  \ \displaystyle {6 \brack 15}, \ \displaystyle {4 \brack 22}, \ \displaystyle {5 \brack 23}, \ \displaystyle {6 \brack 24}, \ \displaystyle{6 \brack 33}.
\]
  We put  ${\frak k}= {\frak h} \oplus {\frak m}_6$,   ${\frak p}_1= {\frak m}_1 \oplus {\frak m}_5$,  ${\frak p}_2= {\frak m}_2 \oplus {\frak m}_4$, ${\frak p}_3=\fr{m}_3$  and ${\frak q}_1=\fr{m}_6$.  
Then  ${\frak k}$ is a subalgebra  of ${\frak g}$, and   from Propositon \ref{newprop2} we   conclude that ${\frak k}_1$ is also a subalgebra of $\fr{g}$. In particular, we have  ${\frak k} = {\frak k}_1 \oplus\fr{h}_1\oplus {\frak h}_2 $, where $\fr{h}_1=\fr{su}(2)$, and $\fr{h}_2=\fr{su}(3)$.  Also,  for dimensional reasons it is ${\frak k}_1  = \frak{su}(6)$.   Now, by using (\ref{Liem}) we obtain the following inclusions:
 \begin{equation}\label{eq27}
 \begin{tabular}{lll}  
 $[{\frak p}_1, \ {\frak p}_1] \subset {\frak p}_2 \oplus {\frak k}$, &  $[{\frak p}_1, \ {\frak p}_3] \subset {\frak p}_2$, & $[\fr{p}_2, \fr{p}_2]\subset {\frak p}_2 \oplus {\frak k}$,\\
 $[{\frak p}_1, \ {\frak p}_2] \subset {\frak p}_1 \oplus {\frak p}_3$, & $[\fr{p}_2, \fr{p}_3]\subset \fr{p}_1$, &  $[\fr{p}_3, \fr{p}_3]\subset \fr{k}.$  
     \end{tabular}
  \end{equation}
 Thus we  obtain  an irreducible decomposition ${\frak g} =  {\frak k}\oplus {\frak p}_1  \oplus {\frak p}_2\oplus\fr{p}_3$  as
$\Ad(K)$-modules, which are mutually non-equivalent. Since $\fr{h}\subset\fr{k}$, we can determine the fibration $G/H\to G/K$  given  by 
\[
\E_8/\U(1)\times \SU(2)\times \SU(3)\times \SU(5)\to \E_8/\SU(6)\times \SU(2)\times \SU(3).
\]

We consider  a Riemannian submersion $\pi : ( G/H, \,  g ) \to ( G/K, \, \check{g} )$ with totally geodesic fibers isometric to $( K/H, \, \hat{g} )$. 

Note that a $G$-invariant metric $\check{g}$ on $G/K=\E_8/\SU(6)\times \SU(2)\times \SU(3)$ is given by  
\begin{equation}\label{eq28}
\check{g} =   y_1\cdot B|_{{\mbox{\footnotesize$ \frak p$}}_1} +y_2\cdot B|_{{\mbox{\footnotesize$ \frak p $}}_2}+y_3\cdot B|_{{\mbox{\footnotesize$ \frak p$}}_3}  \quad (y_1, y_2, y_3)\in\bb{R}_{+}^{3}, 
\end{equation} 
 a $G$-invariant metric $g$ on $G/H=\E_8/U(1)\times \SU(2)\times \SU(3)\times \SU(5)$  is given by  
\begin{equation}\label{eq29}
g  =   y_1\cdot B|_{{\mbox{\footnotesize$ \frak p$}}_1} + y_2\cdot B|_{{\mbox{\footnotesize$ \frak p$}}_2} +y_3\cdot B|_{{\mbox{\footnotesize$ \frak p$}}_3}+ z_1\cdot B|_{{\mbox{\footnotesize$ \frak q$}}_1} , \quad (y_1, y_2,y_3,  z_1)\in\bb{R}_{+}^{3}
\end{equation} 
 and a $K$-invariant metric $\hat{g} $ on $K/H \simeq \SU(6)/\U(1)\times \SU(5)$  is given by  
\begin{equation}\label{eq30}
\hat{g}  =   z_1\cdot B|_{{\mbox{\footnotesize$ \frak q$}}_1} , \quad z_1\in\bb{R}_{+}.
\end{equation}
Notice that the metric (\ref{eq29}) can be written as  the metric of the form (\ref{metric002}): 
\begin{equation}\label{metrictotal2}
 g = y_1 \cdot B|_{{\mbox{\footnotesize$ \frak m$}}_1} + y_2\cdot B|_{{\mbox{\footnotesize$ \frak m$}}_2} +y_3 \cdot B|_{{\mbox{\footnotesize$ \frak m$}}_3}
 + y_2\cdot B|_{{\mbox{\footnotesize$ \frak m$}}_4}+y_1 \cdot B|_{{\mbox{\footnotesize$ \frak m$}}_5}
 + z_1\cdot B|_{{\mbox{\footnotesize$ \frak m$}}_6}. 
 \end{equation}
 From (\ref{eq25}) we obtain components $r_i$ of the Ricci tensor $r$ for the metric (\ref{metrictotal2}) on $G/H$ as follows: 
{ \small{
\begin{equation}\label{eq40}
\left\{\begin{array}{ll}
r_1 &=   \displaystyle{ \frac{1}{2 y_1} -
\frac{1}{2\,d_1}\biggl( {2 \brack 11}+{1 \brack 45}\biggr) \frac{y_2}{{y_1}^2} + \frac{1}{2\,d_1}\biggl( {1 \brack 23}+
 {1 \brack 34} \biggr)
\biggl(\frac{y_1}{y_3 y_2} - \frac{y_3}{y_1 y_2}- \frac{y_2}{y_1 y_3}
\biggr) - \frac{1}{2\,d_1} {1 \brack 56} \frac{z_1}{{y_1}^2} }
\\ & \\
r_2 &=  \displaystyle{\frac{1}{2y_2} + 
\frac{1}{4\,d_2} {2 \brack 11}\biggl( \frac{y_2}{{y_1}^2} - \frac{2}{y_2}\biggr)
 - \frac{1}{2\,d_2} {4 \brack 22}\frac{1}{{y_2}} } 
 \\ & \displaystyle{ +  \frac{1}{2\,d_2}\biggl( {2 \brack 13}
  +  {2 \brack 35}\biggr)
\biggl(\frac{y_2}{y_3 y_1} - \frac{y_3}{y_2 y_1}- \frac{y_1}{y_2 y_3}
\biggr)-  \frac{1}{2\,d_2} {2 \brack 46} \frac{z_1}{{y_2}^2}
} 
\\ & \\
r_3 &= \displaystyle{\frac{1}{2 y_3}} - \displaystyle{\frac{1}{2\,d_3} {6 \brack 33}\frac{z_1}{{y_3}^{2}}+
\frac{1}{2\,d_3}\biggl( {3 \brack 12}
 +  {3\brack 14}
 +  {3 \brack 25}
\biggl(\frac{y_3}{y_2 y_1} - \frac{y_2}{y_3 y_1}- \frac{y_1}{y_3 y_2}
\biggr)}

\\ & \\
r_4 & = \displaystyle{\frac{1}{2 y_2} - 
\frac{1}{4\,d_4} {4 \brack 22}}\frac{1}{y_2}
 \displaystyle{  +  \frac{1}{2\,d_4} {4 \brack 13}
\biggl(\frac{y_2}{y_1 y_3} - \frac{y_1}{y_3 y_2}- \frac{y_3}{y_2 y_1}
\biggr)  +  \frac{1}{2\,d_4} {4 \brack 15}
\biggl(\frac{y_2}{{y_1 }^2} - \frac{2}{y_2 }
\biggr) - \frac{1}{2\,d_4} {4 \brack 26} \frac{z_1}{{y_2}^2}
}
\\ & \\
r_5 & = \displaystyle{\frac{1}{2 y_1} -  
\frac{1}{2\,d_5} {5 \brack 14}} 
 \frac{y_2}{{y_1 }^2}
+\displaystyle{\frac{1}{2\,d_5} {5 \brack23}}
\left(\frac{y_1}{y_2 y_3} - \frac{y_2}{y_3 y_1}- \frac{y_3}{y_2 y_1}
\right)-  \displaystyle{\frac{1}{2\,d_5} {5 \brack 16}}
 \frac{z_1}{{y_1 }^2}
\\ & \\
r_6 & =\displaystyle{\frac{1}{2 z_1} + 
\frac{1}{4\,d_6} {6 \brack 33}\biggl( \frac{z_1}{{y_3}^2} - \frac{2}{z_1}\biggr)
 +  \frac{1}{2\,d_6} {6 \brack 15}
\biggl(\frac{z_1}{{y_1}^2} - \frac{2}{ z_1}
\biggr) +  \frac{1}{2\,d_6} {6 \brack 24}
\biggl(\frac{z_1}{{y_2}^2} - \frac{2}{ z_1}
\biggr)} .
\end{array}
\right.
\end{equation}
}}

Now we consider  a $G$-invariant metric $\check{g}$ on $G/K=\E_8/\SU(6)\times \SU(2)\times \SU(3)$ is given by   (\ref{eq28}).  

\begin{lemma}\label{lem55}
For an invariant metric $\check{g} $ on $\E_8/\SU(6)\times \SU(2)\times \SU(3)$ given by (\ref{eq28}), the non-zero structure constants are the following (and their symmetries):
{\small \[
   \left[\!\!{2 \brack 11}\!\!\right],   \left[\!\!{3 \brack 12}\!\!\right], \left[\!\!{2 \brack 22}\!\!\right]. 
\]} 
 \end{lemma}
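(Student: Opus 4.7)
The plan is to use the bracket inclusions \eqref{eq27} to eliminate systematically all structure constants $\displaystyle {k \brack ij}$ for $i,j,k\in\{1,2,3\}$ which must vanish on $G/K=\E_8/\SU(6)\times\SU(2)\times\SU(3)$, and then to verify that the three surviving classes $\displaystyle {2 \brack 11},\displaystyle {3 \brack 12},\displaystyle {2 \brack 22}$ are actually non-zero.

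First I would enumerate the distinct equivalence classes of triples $(i,j,k)$ under the full symmetry $c^{k}_{ij}=c^{k}_{ji}=c^{j}_{ki}$ recalled in \S1; there are ten such classes. Then I would apply each of the six inclusions in \eqref{eq27} in turn and read off the resulting vanishings. For instance, $[\fr{p}_1,\fr{p}_1]\subset\fr{p}_2\oplus\fr{k}$ forces $\displaystyle {1 \brack 11}=\displaystyle {3 \brack 11}=0$; $[\fr{p}_1,\fr{p}_2]\subset\fr{p}_1\oplus\fr{p}_3$ forces $\displaystyle {2 \brack 12}=\displaystyle {1 \brack 22}=0$; $[\fr{p}_1,\fr{p}_3]\subset\fr{p}_2$ forces $\displaystyle {1 \brack 13}=\displaystyle {3 \brack 13}=0$; $[\fr{p}_2,\fr{p}_2]\subset\fr{p}_2\oplus\fr{k}$ forces $\displaystyle {3 \brack 22}=0$; $[\fr{p}_2,\fr{p}_3]\subset\fr{p}_1$ forces $\displaystyle {2 \brack 23}=\displaystyle {3 \brack 23}=0$; and $[\fr{p}_3,\fr{p}_3]\subset\fr{k}$ forces $\displaystyle {3 \brack 33}=0$. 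Collecting all vanishings (and exploiting $c^{k}_{ij}=c^{j}_{ik}$ to absorb classes such as $\displaystyle {2 \brack 33}=\displaystyle {3 \brack 23}$ and $\displaystyle {1 \brack 33}=\displaystyle {3 \brack 13}$), the only classes left as possibly non-zero are exactly the three listed.

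To show that these survivors are actually non-zero, I would refine to the $\Ad(H)$-decomposition $\fr{p}_1=\fr{m}_1\oplus\fr{m}_5$, $\fr{p}_2=\fr{m}_2\oplus\fr{m}_4$, $\fr{p}_3=\fr{m}_3$ and decompose each $G/K$-constant $\displaystyle {k \brack ij}$ as a sum of the finer $G/H$-constants $\displaystyle {k' \brack i'j'}$ over all refined triples $(i',j',k')$ with $\fr{m}_{i'}\subset\fr{p}_i$ etc.\ that are compatible with \eqref{Liem}. Using those bracket rules, the only surviving refined contributors to $\displaystyle {2 \brack 11}_{G/K}$ come from $(1,1,2)$ and $(1,5,4)$, so it equals a positive combination of $\displaystyle {2 \brack 11}$ and $\displaystyle {4 \brack 15}=\displaystyle {5 \brack 14}$ on $G/H$. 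Likewise $\displaystyle {3 \brack 12}_{G/K}$ is a positive combination of $\displaystyle {3 \brack 12}, \displaystyle {4 \brack 13}, \displaystyle {5 \brack 23}$ on $G/H$, and $\displaystyle {2 \brack 22}_{G/K}$ is a positive multiple of $\displaystyle {4 \brack 22}$. All these refined constants are the non-zero structure constants that already appear in the $G/H$-Einstein system \eqref{eq34}, so each of the three $G/K$-constants is strictly positive.

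The main obstacle is not conceptual but organizational: one must carefully account for the symmetries $c^{k}_{ij}=c^{j}_{ik}$ to avoid double counting in the elimination step, and correctly enumerate the contributing refined triples and their multiplicities in the non-vanishing step. Once this bookkeeping is done, the lemma follows directly from \eqref{eq27}, \eqref{Liem} and the known non-vanishing of the listed $G/H$-structure constants.
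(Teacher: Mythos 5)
Your elimination of the seven vanishing classes of triples via the inclusions (\ref{eq27}) is exactly the paper's own (one-line) proof, which simply cites the decomposition $\fr{g}=\fr{k}\oplus\fr{p}_1\oplus\fr{p}_2\oplus\fr{p}_3$ together with (\ref{eq27}); your bookkeeping of the ten symmetry classes and the resulting seven vanishings is correct. The extra verification that the three survivors are strictly positive goes beyond what the paper records (and is not needed for how the lemma is used afterwards); just note that the mere appearance of the refined $G/H$-constants in (\ref{eq34}) does not by itself show they are non-zero --- that is only established later, in Proposition \ref{structure2}.
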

\begin{proof}
This is an immediate consequence of  the decomposition $\fr{g}=\fr{k}\oplus\fr{n}_1\oplus\fr{n}_2\oplus\fr{n}_3$ and relation (\ref{eq27}).
\end{proof}
 
 We set  $\check{d}_1= \dim {\frak n}_1 = 72$, $\check{d}_2 =\dim\fr{n}_2=90$ and $\check{d}_3 =\dim\fr{n}_3=40$.             \begin{prop}
 The components of the Ricci tensor $\check{r}$ of the invariant metric $\check{g} $ on $\E_8/K$ defined by  (\ref{eq28}),  are given as follows:  
      \begin{equation}\label{eq117}
\left\{
\begin{array}{ll} 
\check{r}_1 = & \displaystyle{\frac{1}{2{y}_1}- \frac{1}{2\, \check{d}_1} \left[\!\!{2 \brack 11}\!\!\right]\frac{y_2}{ {y_1}^2} + \frac{1}{2\,\check{d}_1}\left[\!\!{1 \brack 23}\!\!\right]\biggl(\frac{y_1}{y_2\,y_3}-\frac{y_2}{y_1\,y_3}-\frac{y_3}{y_1\,y_2} \biggr)} \\ 
& \\
\check{r}_2 = &  \displaystyle{\frac{1}{2{y}_2} + \frac{1}{4\,\check{d}_2} \left[\!\!{2 \brack 11}\!\!\right]\biggr(\frac{y_2}{{y_1}^{2}}-\frac{2}{y_2}\biggr) +
\frac{1}{2\,\check{d}_2} \left[\!\!{2 \brack 13}\!\!\right] \biggl(\frac{y_2}{y_1\,y_3}-\frac{y_1}{y_2\,y_3}-\frac{y_3}{y_1\,y_2} \biggr)
- \frac{1}{4\,\check{d}_2}\left[\!\!{2 \brack 22}\!\!\right] \frac{1}{y_2}} \\ 
& \\
\check{r}_3 = &  \displaystyle{\frac{1}{2{y}_3}   +
\frac{1}{2\,\check{d}_3} \left[\!\!{3 \brack 12}\!\!\right] \biggl(\frac{y_3}{y_1\,y_2}-\frac{y_1}{y_2\,y_3}-\frac{y_2}{y_1\,y_3} \biggr).  }\end{array}
\right. 
\end{equation}
\end{prop}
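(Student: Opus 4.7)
The plan is to apply the Ricci formula of Theorem~\ref{ric1} directly to the three irreducible $\Ad(K)$-summand decomposition $\fr{p} = \fr{p}_1 \oplus \fr{p}_2 \oplus \fr{p}_3$ of $T_o(G/K)$ for $G/K = \E_8/\SU(6)\times\SU(2)\times\SU(3)$, with $q=3$, metric parameters $(y_1,y_2,y_3)$, and dimensions $(\check{d}_1,\check{d}_2,\check{d}_3) = (72,90,40)$. The crucial simplification comes from Lemma~\ref{lem55}: apart from the $S_3$-orbits generated under $c^k_{ij} = c^k_{ji} = c^j_{ki}$, only the triples ${2 \brack 11}$, ${3 \brack 12}$ and ${2 \brack 22}$ are non-zero. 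Consequently only a handful of the nine index pairs survive in each of the two double sums in the formula for $\check{r}_k$.

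For each $k\in\{1,2,3\}$ I would enumerate the surviving pairs in both the positive sum $\frac{1}{4\check{d}_k}\sum_{j,i}\frac{y_k}{y_j y_i}{k \brack ji}$ and the negative sum $-\frac{1}{2\check{d}_k}\sum_{j,i}\frac{y_j}{y_k y_i}{j \brack ki}$, identifying every occurrence with its representative in Lemma~\ref{lem55} through the symmetry relations. The case $\check{r}_3$ is easiest: since all triples of the form ${k \brack 33}$ and ${3 \brack 3\ast}$ vanish by $[\fr{p}_3,\fr{p}_3]\subset\fr{k}$ from \eqref{eq27}, only the family ${3 \brack 12}$ enters, and one reads off the third line of \eqref{eq117} immediately. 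For $\check{r}_2$, besides the cross contributions of ${2 \brack 13} = {3 \brack 12}$ from the pairs $(j,i)\in\{(1,3),(3,1)\}$, one collects the diagonal pieces of ${2 \brack 11}$ from $(j,i)=(1,1)$ in both sums, together with the ${2 \brack 22}$ pieces from $(j,i)=(2,2)$; combining the two sums yields exactly the factors $\bigl(\frac{y_2}{y_1^2} - \frac{2}{y_2}\bigr)$ for ${2 \brack 11}$ and $-\frac{1}{y_2}$ for ${2 \brack 22}$, as in the middle line of \eqref{eq117}.

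The sole delicate point lies in the proof of the formula for $\check{r}_1$. Besides the obvious contribution ${2 \brack 11}\,\frac{y_2}{y_1^2}$ arising from $(j,i)=(2,1)$ in the negative sum, one must also track $(j,i)=(1,2)$ in \emph{both} sums, since the symmetry ${1 \brack 12} = {2 \brack 11}$ makes this occurrence non-zero. The resulting $\frac{1}{y_2}{2 \brack 11}$ contributions, with total coefficient $+\frac{2}{4\check{d}_1}$ from the positive sum (the pairs $(1,2)$ and $(2,1)$) and $-\frac{1}{2\check{d}_1}$ from the negative sum (the pair $(1,2)$), cancel exactly, which is why the compact first line of \eqref{eq117} carries no $\frac{1}{y_2}$ term. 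This cancellation is the only non-mechanical step; once noticed, the entire argument is a routine bookkeeping exercise combining Theorem~\ref{ric1} with the vanishing results of Lemma~\ref{lem55}.
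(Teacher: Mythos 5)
Your proposal is correct and is exactly the paper's proof, which consists of the single line ``We use Lemma \ref{lem55} and we apply again Theorem \ref{ric1}''; you have simply carried out the bookkeeping that the authors leave implicit, and your accounting of the surviving index pairs (including the cancellation of the $\frac{1}{y_2}\left[{2 \brack 11}\right]$ terms in $\check{r}_1$) checks out.
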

\begin{proof}
We use Lemma \ref{lem55} and  we  apply again Theorem \ref{ric1}. 
\end{proof} 

From Lemma \ref{submersion_ricci}, by taking the horizontal part of  $r_1$ and $r_5$,  and $r_2$ and $r_3$ in (\ref{eq40}),  we see that 
 \begin{equation}\label{ee6}
\left\{\begin{array}{ll} 
{\check r}_1  &=  \displaystyle{ \frac{1}{2 y_1} -
\frac{1}{2\,d_1}\biggl( {2 \brack 11}+{1 \brack 45}\biggr) \frac{y_2}{{y_1}^2} + \frac{1}{2\,d_1}\biggl( {1 \brack 23}+
 {1 \brack 34} \biggr)
\biggl(\frac{y_1}{y_3 y_2} - \frac{y_3}{y_1 y_2}- \frac{y_2}{y_1 y_3}
\biggr)  }
\\ & \\
& =  \displaystyle{\frac{1}{2 y_1} -  
\frac{1}{2\,d_5} {5 \brack 14}} 
 \frac{y_2}{{y_1 }^2}
+\displaystyle{\frac{1}{2\,d_5} {5 \brack23}}
\left(\frac{y_1}{y_2 y_3} - \frac{y_2}{y_3 y_1}- \frac{y_3}{y_2 y_1}
\right)
\\ & \\
{\check r}_2 &= \displaystyle{\frac{1}{2y_2} + 
\frac{1}{4\,d_2} {2 \brack 11}\biggl( \frac{y_2}{{y_1}^2} - \frac{2}{y_2}\biggr)
 - \frac{1}{2\,d_2} {4 \brack 22}\frac{1}{{y_2}} } 
 \displaystyle{ +  \frac{1}{2\,d_2}\biggl( {2 \brack 13}
  +  {2 \brack 35}\biggr)
\biggl(\frac{y_2}{y_3 y_1} - \frac{y_3}{y_2 y_1}- \frac{y_1}{y_2 y_3}
\biggr)
} 
\\ & \\
&=   \displaystyle{\frac{1}{2 y_2} - 
\frac{1}{4\,d_4} {4 \brack 22}}\frac{1}{y_2}
 \displaystyle{  +  \frac{1}{2\,d_4} {4 \brack 13}
\biggl(\frac{y_2}{y_1 y_3} - \frac{y_1}{y_3 y_2}- \frac{y_3}{y_2 y_1}
\biggr)  +  \frac{1}{2\,d_4} {4 \brack 15}
\biggl(\frac{y_2}{{y_1 }^2} - \frac{2}{y_2 }
\biggr) 
}.
\end{array}
\right.
\end{equation}

 Thus we obtain the following equations: 
 \begin{equation}\label{eq42}
 \left. \begin{tabular}{l}
$\displaystyle{ \frac{1}{2\,\check{d}_1}  \left[\!\!{3 \brack 12}\!\!\right]} 
 = \displaystyle{ \frac{1}{2d_1} {3 \brack 12}+\frac{1}{2d_1} {4 \brack 13}}
 =  \displaystyle{\frac{1}{2 d_5} {5 \brack 23}}$  \\\\
$\displaystyle{\frac{1}{2\,\check{d}_2}\left[\!\!{3 \brack 12}\!\!\right]}
  = \displaystyle{\frac{1}{2 d_2} {3 \brack 12} +\frac{1}{2 d_2}{5 \brack 23} 
=   \frac{1}{2 d_4} {4 \brack 13 }}$  \\\\ 
$\displaystyle{\frac{1}{2\,\check{d}_2}\left[\!\!{2 \brack 11}\!\!\right]}
 = \displaystyle{\frac{1}{4 d_2} {2 \brack 11} 
=   \frac{1}{2 d_4} {5 \brack 14 }}$. 
\end{tabular}\right\}.
\end{equation} 

From equations (\ref{eq42}), we obtain that 
 \begin{equation}\label{eq43}
  {2 \brack 11}  = 4 {5 \brack 14 },  \,   {4 \brack 13} = 2 {5 \brack 23}, \,  {3 \brack 12} = 3 {5 \brack 23}.
\end{equation}

From equations (\ref{eq34}) and  (\ref{eq43}), we see that 
 \begin{equation}\label{eq44}
\left\{\begin{array}{l}  
  \displaystyle{ 60 - 24 {5 \brack 14} - 4 {6 \brack 15} +2 {4 \brack 22} -12 {5 \brack 23}+2 {6 \brack 24}= 0
 } 
 \\  \\ 
\displaystyle{ 20 + 4 {5 \brack 14} - 2 {4 \brack 22}   - 8 {5 \brack 23} -2 {6\brack 24} +2 {6 \brack 33}= 0
 }  \\ \\ 
 \displaystyle{ 10 + 2 {5 \brack 14} -  {4 \brack 22}  -4 {5 \brack 23} +2 {6\brack 24} - 2 {6 \brack 33} = 0
 }   \\ \\ 
 \displaystyle{ 6  - 6 {5 \brack 14} +  4 {6 \brack 15} + {4 \brack 22} -2 {6\brack 24}= 0
 }\\ \\  
 \displaystyle{2 + 2  {5 \brack 14} - 4 {6 \brack 15} +  2 {5 \brack 23}  -2 {6\brack 24} - {6 \brack 33}= 0. 
} 
\end{array}
\right.
\end{equation} 

Now, by solving  equations (\ref{eq44}), we obtain that 
 \begin{equation}\label{eq45}
 \displaystyle{ {5 \brack 14} =1 +  {4 \brack 22} /6, \ {6 \brack 15}  = 1, \ {5 \brack 23}= 3-  {4 \brack 22}/6, 
 \ {6\brack 24}=2, \ {6\brack 33} = 2. 
 }
 \end{equation} 

\subsubsection{\bf The contribution of the twistor fibration}
For the computation of the triples $\displaystyle {4 \brack 22}$ and $\displaystyle {6\brack 24}$   we use the twistor fibration which admits any  flag manifold $M=G/H$ of a compact (semi)-simple Lie group $G$, over an irreducible symmetric space $G/L$ of compact type (\cite[pp. 43-44]{Bur}).  This method   was initially applied in \cite{Chry3}.

We set $\fr{l}=\fr{h}\oplus\fr{m}_2\oplus\fr{m}_4\oplus\fr{m}_6$ and $\fr{p}=\fr{m}_1\oplus\fr{m}_3\oplus\fr{m}_5$. Then, in view of the inclusions given by (\ref{Liem})  we conclude that    $[\fr{l}, \fr{l}]\subset\fr{l}$, $[\fr{l}, \fr{p}]\subset\fr{p}$, and $[\fr{p}, \fr{p}]\subset\fr{l}$. Let $L$ be the connected Lie subgroup of $G$ with Lie algebra $\fr{l}$.  Then $\fr{g}=\fr{l}\oplus\fr{p}$ is a reductive decomposition of $G/L$, and from the latter relations it follows that $G/L$ is  a locally symmetric space.  In particular, since $G=\E_8$ is a  simply connected Lie group, $G/L$ is also simply connected and thus it is a symmetric space. Because $G$ is simple (and compact), $G/L$ is an irreducible symmetric space (of compact type).  In our case  we have that $\dim\fr{l}=136$, thus it must be $G/L=\E_8/\E_7\times \SU(2)$, since  $\dim  G/L=\dim G -\dim L= 278-136=112=\dim \fr{p}$. 
Since  $\fr{h}\subset\fr{l}$ it follows that $H\subset L$, and thus we can determine the fibration $L/H\to G/H \stackrel{\pi}{\rightarrow}G/L$, explicitly given as follows:
 {\small{\[
 \E_7\times \SU(2)/\U(1)\times \SU(2) \times \SU(3)\times \SU(5)  \longrightarrow   \E_8/\U(1)\times \SU(2)\times \SU(3)\times \SU(5) \stackrel{\pi}{\rightarrow}  \E_8/\E_7\times \SU(2).
\]}}
\noindent We observe that on the fiber $L/H$, the Lie group  $L$ does not act  (almost) effectively, that is $H$ contains some     non-trivial normal subgroups of $L$.  Let $L'$ the normal subgroup of $L$ which acts effectively on $L/H$ with isotropy subgroup $H'$.  Then  $L/H=L'/H'$, that is
{\small\[
L/H=\E_7\times \SU(2)/\U(1)\times \SU(2) \times \SU(3)\times \SU(5)=\E_7/\U(1)\times \SU(3)\times \SU(5)=L'/H'.
\]}
The fiber $L'/H'$ is a flag manifold  with three isotropy summands (\cite{Kim}): Let $\fr{l}'=\fr{h}'\oplus\fr{f}$ be a reductive decomposition of $\fr{l}'$ with repsect to  $B_{\E_7}$,  the negative of  the Killing form of $\E_7$. Then  $T_{o'}(L'/H')=\fr{f}=\fr{f}_1\oplus\fr{f}_2\oplus\fr{f}_3$, where $\fr{f}_1=\fr{m}_2$, $\fr{f}_2=\fr{m}_4$, and $\fr{f}_3=\fr{m}_6$.    We set $D_1=\dim\fr{f}_1=60$, $D_2=\dim\fr{f}_2=30$ and $D_3=\dim\fr{f}_3=10$ and we consider   $\E_7$-invariant metrics on $\E_7/\U(1)\times \SU(3)\times \SU(5)$, of the form 
\begin{equation}\label{eq301}
g_{\fr{f}} = w^{}_1 \cdot B_{\E_7}\Big|_{\fr{f}_1} + w^{}_2 \cdot B_{\E_7}\Big|_{\fr{f}_2}+ w^{}_3\cdot B_{\E_7}\Big|_{\fr{f}_3}, \quad (w_1, w_2, w_3)\in\bb{R}_{+}^{3}.
  \end{equation}
 \begin{lemma}\label{lem7}
For a  $L'$-invariant metric $g_{\fr{f}}$ on the fiber $L'/H'$ given by (\ref{eq301}), the non-zero structure constants $\displaystyle{k \brack ij}_{\fr{f}}$ are $\displaystyle{2\brack 11}_{\fr{f}}$ and $\displaystyle{3 \brack 12}_{\fr{f}}$  (and their symmetries).
\end{lemma}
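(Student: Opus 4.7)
The plan is to use the bracket inclusions (\ref{Liem}) of the ambient flag manifold $\E_8/\U(1)\times\SU(2)\times\SU(3)\times\SU(5)$ to detect which triples on the fiber $L'/H'$ are forced to vanish, and then to appeal to the irreducibility/generation properties of the submodules $\fr{m}_k$ to verify that the two surviving candidates are actually non-zero. Since the Lie bracket on $\fr{f}=\fr{f}_1\oplus\fr{f}_2\oplus\fr{f}_3\subset\fr{e}_7\subset\fr{e}_8$ is intrinsic, the $\Ad(H')$-equivariant projections onto the pieces $\fr{f}_k=\fr{m}_{2k}$ are the same whether computed inside $\fr{e}_7$ or $\fr{e}_8$, and hence so are the vanishing/non-vanishing patterns of the structure constants (only the numerical values change under passing from $B$ to $B_{\E_7}$).

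First, translate the identifications $\fr{f}_1=\fr{m}_2$, $\fr{f}_2=\fr{m}_4$, $\fr{f}_3=\fr{m}_6$ through (\ref{Liem}), using that $\fr{m}_k=0$ for $k>N=6$. This gives
\[
[\fr{f}_1,\fr{f}_1]\subset\fr{h}+\fr{f}_2,\quad
[\fr{f}_1,\fr{f}_2]\subset\fr{f}_1+\fr{f}_3,\quad
[\fr{f}_1,\fr{f}_3]\subset\fr{f}_2,\quad
[\fr{f}_2,\fr{f}_2]\subset\fr{h},\quad
[\fr{f}_2,\fr{f}_3]\subset\fr{f}_1,\quad
[\fr{f}_3,\fr{f}_3]\subset\fr{h}.
\]
In particular, since $\fr{m}_8=\fr{m}_{12}=0$, the brackets $[\fr{f}_2,\fr{f}_2]$ and $[\fr{f}_3,\fr{f}_3]$ land entirely in $\fr{h}$ and contribute nothing to $\fr{f}$; hence every triple with a repeated index $22$ or $33$ vanishes. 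Combining this with the total symmetry $\begin{bmatrix} k \\ ij \end{bmatrix}_{\fr{f}}=\begin{bmatrix} i \\ jk \end{bmatrix}_{\fr{f}}=\begin{bmatrix} j \\ ki \end{bmatrix}_{\fr{f}}$, the only surviving candidates are $\begin{bmatrix} 2 \\ 11 \end{bmatrix}_{\fr{f}}\equiv\begin{bmatrix} 1 \\ 12 \end{bmatrix}_{\fr{f}}$, coming from the $\fr{f}_2$-part of $[\fr{f}_1,\fr{f}_1]$, and $\begin{bmatrix} 3 \\ 12 \end{bmatrix}_{\fr{f}}\equiv\begin{bmatrix} 1 \\ 23 \end{bmatrix}_{\fr{f}}\equiv\begin{bmatrix} 2 \\ 13 \end{bmatrix}_{\fr{f}}$, coming from the $\fr{f}_3$-part of $[\fr{f}_1,\fr{f}_2]$ (equivalently the $\fr{f}_2$-part of $[\fr{f}_1,\fr{f}_3]$, or the $\fr{f}_1$-part of $[\fr{f}_2,\fr{f}_3]$). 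This already establishes that all other structure constants are zero.

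It remains to show that the two listed triples do not accidentally vanish. For $\begin{bmatrix} 2 \\ 11 \end{bmatrix}_{\fr{f}}$, non-vanishing is equivalent to the existence of roots $\alpha,\beta\in\Delta^+(\alpha_5,2)$ with $\alpha+\beta\in\Delta^+(\alpha_5,4)$; such a pair must exist because the nilradical $\fr{n}$ of $\fr{u}$ is generated by $\fr{n}_1$, so in particular $\fr{n}_4\subset[\fr{n}_2,\fr{n}_2]+[\fr{n}_1,\fr{n}_3]$ and the $[\fr{n}_2,\fr{n}_2]$-piece is forced to be non-trivial as $\fr{n}_3$ and $\fr{n}_4$ are both non-zero and irreducible as $\Ad(H^{\bb{C}})$-modules. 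For $\begin{bmatrix} 3 \\ 12 \end{bmatrix}_{\fr{f}}$, Proposition \ref{Burs-Rawn} asserts that $\fr{n}_6=\fr{z}$ is an irreducible $\ad(\fr{h}^{\bb{C}})$-module generated by $\fr{g}^{\bb{C}}_{\widetilde{\alpha}}$; combined with the chain $[\fr{n}_2,\fr{n}_4]+[\fr{n}_1,\fr{n}_5]+[\fr{n}_3,\fr{n}_3]\supset\fr{n}_6$ (and the fact that $[\fr{n}_3,\fr{n}_3]$ projects to $\fr{f}_3$ via a different, already-excluded triple) one deduces that $[\fr{n}_2,\fr{n}_4]\cap\fr{n}_6\neq\{0\}$. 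The main obstacle is therefore this last non-vanishing step; once it is established by root-theoretic bookkeeping in $\E_8$ restricted to $\Delta^+(\alpha_5,k)$ for $k=2,4,6$, the lemma follows.
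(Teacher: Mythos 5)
Your first two paragraphs are exactly the paper's proof: the authors also derive the six inclusions $[\fr{f}_1,\fr{f}_1]\subset\fr{h}'\oplus\fr{f}_2$, $[\fr{f}_1,\fr{f}_2]\subset\fr{f}_1\oplus\fr{f}_3$, $[\fr{f}_1,\fr{f}_3]\subset\fr{f}_2$, $[\fr{f}_2,\fr{f}_2]\subset\fr{h}'$, $[\fr{f}_2,\fr{f}_3]\subset\fr{f}_1$, $[\fr{f}_3,\fr{f}_3]\subset\fr{h}'$ from (\ref{Liem}) via $\fr{f}_k=\fr{m}_{2k}$ and $\fr{m}_j=0$ for $j>6$, and conclude that only $\left[{2\atop 11}\right]_{\fr{f}}$ and $\left[{3\atop 12}\right]_{\fr{f}}$ can survive. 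That part is correct and is all the paper itself proves under this lemma.

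The gap is in your third paragraph, the non-vanishing claims. For $\left[{2\atop 11}\right]_{\fr{f}}$: since $\fr{n}$ is generated by $\fr{n}_1$, one already has $\fr{n}_4=[\fr{n}_1,\fr{n}_3]$, so the containment $\fr{n}_4\subset[\fr{n}_2,\fr{n}_2]+[\fr{n}_1,\fr{n}_3]$ is satisfied by the second summand alone and places no constraint whatsoever on $[\fr{n}_2,\fr{n}_2]$; irreducibility of $\fr{n}_4$ only tells you $[\fr{n}_2,\fr{n}_2]\cap\fr{n}_4$ is $0$ or all of $\fr{n}_4$, not which. For $\left[{3\atop 12}\right]_{\fr{f}}$: the step where you discard $[\fr{n}_3,\fr{n}_3]$ because the triple $\left[{6\atop 33}\right]$ is ``already-excluded'' confuses two different things — it is excluded as a structure constant of the \emph{fiber} (because $\fr{m}_3$ is not in $\fr{f}$), but as a bracket in $\fr{e}_8$ it is emphatically non-zero (Proposition \ref{structure2} gives $\left[{6\atop 33}\right]=2$), so it cannot be removed from the sum $[\fr{n}_2,\fr{n}_4]+[\fr{n}_1,\fr{n}_5]+[\fr{n}_3,\fr{n}_3]\supset\fr{n}_6$, and the desired conclusion $[\fr{n}_2,\fr{n}_4]\neq\{0\}$ does not follow; you concede as much by deferring to unperformed ``root-theoretic bookkeeping.'' Note that the paper does not prove non-vanishing inside this lemma either: it follows from the next computation, where substituting the K\"ahler--Einstein metric $(1,2,3)$ of Proposition \ref{B-H} into $R_1=R_2=R_3$ forces $\left[{2\atop 11}\right]_{\fr{f}}=10$ and $\left[{3\atop 12}\right]_{\fr{f}}=10/3$; if either constant were zero that linear system would be inconsistent. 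Replacing your third paragraph by that observation (or by an explicit pair of roots in $\Delta^+(\al_5,2)$ summing to a root, and similarly for $2+4=6$) would close the gap.
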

\begin{proof}
This result follows from the inclusions $[\fr{f}_1,\fr{f}_1]\subset\fr{h}'\oplus\fr{f}_2$, $[\fr{f}_1, \fr{f}_2]\subset \fr{f}_1\oplus\fr{f}_3$, $[\fr{f}_1, \fr{f}_3]\subset\fr{f}_2$, $[\fr{f}_2, \fr{f}_{2}]\subset\fr{h}'$, $[\fr{f}_2,\fr{f}_3]\subset\fr{f}_1$, and $[\fr{f}_3, \fr{f}_3]\subset\fr{h}'$, 
which are easily obtained from relations given in (\ref{Liem}).
\end{proof}

Let $R_{i}$ be the components of the Ricci tensor  $\Ric_{g_{\fr{f}}}$ for the $\E_7$-invariant metric $g_{\fr{f}}$ on the fiber $L'/H'=\E_7/\U(1)\times \SU(3)\times \SU(5)$, defined by (\ref{eq301}).  Then, in view of Lemma \ref{lem7}  and by applying Theoren \ref{ric1}, (2), we obtain the following forms for the components $R_{i}$.
\begin{prop} The components $R_{i}$ of the Ricci tensor  for an $\E_7$-invariant metric $g_{\fr{f}}$ on the fiber $L'/H'=\E_7/\U(1)\times \SU(3)\times \SU(5)$ defined by (\ref{eq301}), are given as follows:
   {\small\begin{equation}\label{eq31}
\left\{
\begin{array}{ll} 
R_1 = &
\displaystyle{\frac{1}{2\, w_1}-\frac{1}{2\, D_1}  {2 \brack 11} \frac{w_2}{{{w}_1}^2}+\frac{1}{2\, D_1} {1 \brack 23} \biggr(\frac{w_1}{w_2\, w_3}-\frac{w_2}{w_1\, w_3}-\frac{w_3}{w_1\, w_2} \biggr)}
  \\
   \\
R_2 = &
\displaystyle{\frac{1}{2\, w_2}+\frac{1}{4\, D_2}  {2 \brack 11}\biggr(\frac{w_2}{{{w}_1}^2}-\frac{2}{w_2}\biggr)+\frac{1}{2\, D_2} {2 \brack 13} \biggr(\frac{w_2}{w_1\, w_3}-\frac{w_1}{w_2\, w_3}-\frac{w_3}{w_1\, w_2} \biggr)}
 \\
 \\
R_3 = &
\displaystyle{\frac{1}{2\, w_3}+\frac{1}{2\, D_3} {3 \brack 12} \biggr(\frac{w_3}{w_1\, w_2}-\frac{w_1}{w_2\, w_3}-\frac{w_2}{w_1\, w_3} \biggr)}
 \end{array}
\right. 
\end{equation} }
\end{prop}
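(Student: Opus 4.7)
The plan is to apply Theorem \ref{ric1} directly to the flag manifold $L'/H'=\E_7/\U(1)\times\SU(3)\times\SU(5)$ with $q=3$ irreducible isotropy summands $\fr{f}_1,\fr{f}_2,\fr{f}_3$, taking $x_k=w_k$ and $d_k=D_k$ in (\ref{eq5}). The key input is Lemma \ref{lem7}, which asserts that among all triples $\displaystyle{k\brack ij}_{\fr{f}}$ with $1\le i,j,k\le 3$ the only ones that can be non-zero are $\displaystyle{2\brack 11}_{\fr{f}}$ and $\displaystyle{3\brack 12}_{\fr{f}}$ together with their permutations under the symmetries $c^k_{ij}=c^k_{ji}=c^j_{ki}$, i.e.\ the unordered triples $\{1,1,2\}$ and $\{1,2,3\}$. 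This makes every sum in the Park--Sakane formula collapse to just a handful of explicit terms.

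Concretely, for each $k=1,2,3$ I would enumerate the ordered pairs $(j,i)\in\{1,2,3\}^2$ such that $\{k,j,i\}$ coincides with $\{1,1,2\}$ or $\{1,2,3\}$, and do the analogous enumeration for the second sum $\sum_{j,i}\tfrac{w_j}{w_k w_i}\displaystyle{j\brack ki}_{\fr{f}}$ appearing in Theorem \ref{ric1}. Using the symmetries of $c^k_{ij}$, every surviving coefficient can be rewritten in terms of the two canonical triples $\displaystyle{2\brack 11}_{\fr{f}}$ and $\displaystyle{3\brack 12}_{\fr{f}}$ (e.g.\ $\displaystyle{1\brack 12}_{\fr{f}}=\displaystyle{2\brack 11}_{\fr{f}}$ and $\displaystyle{1\brack 23}_{\fr{f}}=\displaystyle{2\brack 13}_{\fr{f}}=\displaystyle{3\brack 12}_{\fr{f}}$). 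After collecting like terms, the cross-terms that survive in the $k=1$ case yield the factor $\displaystyle \frac{w_1}{w_2w_3}-\frac{w_2}{w_1w_3}-\frac{w_3}{w_1w_2}$ multiplying $\displaystyle{1\brack 23}_{\fr{f}}$, together with the $-\frac{1}{2D_1}\displaystyle{2\brack 11}_{\fr{f}}\frac{w_2}{w_1^2}$ term, exactly matching the stated $R_1$. Identical bookkeeping for $k=2$ produces both the ``symmetric'' term $\frac{1}{4D_2}\displaystyle{2\brack 11}_{\fr{f}}\bigl(\frac{w_2}{w_1^2}-\frac{2}{w_2}\bigr)$ (which arises because $\fr{f}_2\subset[\fr{f}_1,\fr{f}_1]$) and the $\displaystyle{2\brack 13}_{\fr{f}}$ cross-term, while $k=3$ only receives contributions from the triple $\{1,2,3\}$ and has no self-coupling term.

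There is essentially no conceptual obstacle; the argument is mechanical, once Lemma \ref{lem7} is in hand. The only pitfalls are combinatorial: making sure that the pair of ordered indices $(j,i)$ and $(i,j)$ contributing the same value to a sum combines so as to produce $\frac{1}{2D_k}$ rather than $\frac{1}{4D_k}$, and making sure the self-coupling terms (those with $j=i$ in a triple of type $\{1,1,2\}$) are distinguished from the ``mixed'' ones so that the characteristic factor $\bigl(\tfrac{w_2}{w_1^2}-\tfrac{2}{w_2}\bigr)$ emerges correctly in $R_2$. Once these bookkeeping issues are handled carefully, the three formulas in (\ref{eq31}) follow directly from the general expression (\ref{eq5}).
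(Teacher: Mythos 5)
Your proposal is correct and is exactly the paper's argument: the authors likewise prove this proposition by citing Lemma \ref{lem7} to kill all triples except $\displaystyle{2\brack 11}_{\fr{f}}$ and $\displaystyle{3\brack 12}_{\fr{f}}$ and then specializing the Park--Sakane formula of Theorem \ref{ric1} to $q=3$. Your extra bookkeeping (the cancellation of the $\frac{1}{w_2}\displaystyle{2\brack 11}_{\fr{f}}$ terms in $R_1$ and the origin of the factor $\bigl(\tfrac{w_2}{w_1^2}-\tfrac{2}{w_2}\bigr)$ in $R_2$) checks out.
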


From Proposition \ref{B-H} we know that      $\E_7/\U(1)\times \SU(3)\times \SU(5)$ admits a unique K\"ahler-Einstein  metric, explicitly given by $1\cdot B_{\E_7}\Big|_{\fr{f}_1} +2\cdot B_{\E_7}\Big|_{\fr{f}_2}+ 3\cdot B_{\E_7}\Big|_{\fr{f}_3}$.  Thus, by solving  the    system $\Big\{R_1-R_2=0,\ R_2-R_3=0\Big\}$,
we   obtain the values $\displaystyle{2\brack 11}_{\fr{f}}=10$ and $\displaystyle{3 \brack 12}_{\fr{f}}=10/3$.  

Since $L'=\E_7$ is a simple Lie subgroup of $\E_8$ there is a positive number $c$, such that $B_{\E_7}=c\cdot B_{\E_{8}}$, where  $B_{\E_8}=B$ is the Killing form of $E_8$. In particular  it is $c=B_{\E_7}/B_{\E_8}=3/5$ (cf. \cite{Brb}).  Then,  by applying an easy computation based on the definition of the  structure constants $\displaystyle{{k\brack ij}}$ we obtain that $\displaystyle {4 \brack 22}$ and $\displaystyle {6\brack 24}$   are given as follows (see for example \cite[Lemma~1]{Chry3}):
\[
{4 \brack 22}= c\cdot {2\brack 11}_{\fr{f}}=3/5\cdot 10=6, \quad {6\brack 24}=c\cdot {3 \brack 12}_{\fr{f}}=3/5\cdot 10/3=2.
\] 
By  substituting the values   $\displaystyle {4 \brack 22}=6$ into equations (\ref{eq45}), we get the explicit values of all non-zero triples for $\E_8/\U(1)\times \SU(2)\times \SU(3)\times  \SU(5)$ with respect to the decomposition $\fr{m}={\frak m}_1 \oplus {\frak m}_2 \oplus {\frak m}_3 \oplus {\frak m}_4 \oplus {\frak m}_5\oplus\fr{m}_6$.

\begin{prop}\label{structure2}
For the $\E_8$-invariant metric $( \ , \ )$ on $M=G/H=\E_8/\U(1)\times \SU(2)\times \SU(3)\times  \SU(5)$ given by (\ref{metric002}), the non-zero  structure constants $\displaystyle {k \brack ij}$ are given as follows: 
\[
 {2 \brack 11}=8, \ {3 \brack 12}=6, \ {4 \brack 13}=4, \ {5 \brack 14}=2, \ {6 \brack 15}=1, \ {4 \brack 22}=6, \ {5 \brack 23}=2,  \ {6 \brack 24}=2, \ {6 \brack 33}=2.
\]
\end{prop}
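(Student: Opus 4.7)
The plan is to follow the three-tier strategy already set up in Section 5.2: (i) the K\"ahler--Einstein constraint, (ii) the submersion $G/H\to G/K$ with $K=\SU(6)\times \SU(2)\times\SU(3)$, and (iii) the twistor fibration over the symmetric space $\E_8/\E_7\times \SU(2)$. The unknowns to be pinned down are the nine triples $\bigl\{{2\brack 11},{3\brack 12},{4\brack 13},{5\brack 14},{6\brack 15},{4\brack 22},{5\brack 23},{6\brack 24},{6\brack 33}\bigr\}$, and each of the three tiers will contribute some independent linear relations among them.

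First, I would substitute the K\"ahler--Einstein profile $(x_1,\dots,x_6)=(1,2,3,4,5,6)$ from Proposition \ref{B-H} into the Ricci components of Proposition \ref{MAINRIC2} and impose $r_1=\cdots=r_6$. This gives the five-equation system (\ref{eq34}), which already reduces the nine unknowns to four degrees of freedom. Next I would invoke Lemma \ref{submersion_ricci} applied to the submersion $\pi:G/H\to G/K$: writing a metric of the special form (\ref{metrictotal2}) as both an $\mathrm{Ad}(H)$-diagonal metric (with $y_1=x_1=x_5$, $y_2=x_2=x_4$, $y_3=x_3$, $z_1=x_6$) and as a submersion metric, and equating the horizontal parts of $r_1$ with $r_5$ and of $r_2$ with $r_4$, I recover the base-space Ricci formulas (\ref{eq117}). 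Matching coefficients of $y_j/(y_iy_k)$ produces the relations (\ref{eq42}), whose solution is the three-parameter set (\ref{eq43}). Plugging (\ref{eq43}) into (\ref{eq34}) yields system (\ref{eq44}), and a direct linear elimination gives (\ref{eq45}), leaving just the single unknown ${4\brack 22}$ (with ${6\brack 24}$ determined as $2$).

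The final step, and the only genuinely non-routine one, is to compute ${4\brack 22}$. For this I would use the twistor fibration: set $\fr{l}=\fr{h}\oplus\fr{m}_2\oplus\fr{m}_4\oplus\fr{m}_6$ and $\fr{p}=\fr{m}_1\oplus\fr{m}_3\oplus\fr{m}_5$, check $[\fr{l},\fr{l}]\subset\fr{l}$, $[\fr{l},\fr{p}]\subset\fr{p}$, $[\fr{p},\fr{p}]\subset\fr{l}$ via (\ref{Liem}), and conclude from simple-connectedness of $\E_8$ and a dimension count ($\dim\fr{l}=136$) that $G/L=\E_8/\E_7\times \SU(2)$ is the symmetric space in question. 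After factoring out the kernel of non-effectiveness, the fiber becomes the three-isotropy-summand flag $L'/H'=\E_7/\U(1)\times \SU(3)\times \SU(5)$ with tangent decomposition $\fr{f}_1\oplus\fr{f}_2\oplus\fr{f}_3 \equiv \fr{m}_2\oplus\fr{m}_4\oplus\fr{m}_6$. Imposing the K\"ahler--Einstein condition on this smaller flag (via Proposition \ref{B-H} applied to $\E_7/\U(1)\times \SU(3)\times \SU(5)$) and using Theorem \ref{ric1} gives the intrinsic structure constants $\bigl[{2\brack 11}\bigr]_{\fr{f}}=10$ and $\bigl[{3\brack 12}\bigr]_{\fr{f}}=10/3$.

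The one subtle point, which I expect to be the main obstacle, is converting these $B_{\E_7}$-normalized constants back to the $B=B_{\E_8}$-normalized constants of $G/H$. Since $\E_7\hookrightarrow\E_8$ is a simple subgroup, $B_{\E_7}=c\cdot B_{\E_8}|_{\E_7}$ for the Dynkin index ratio $c=3/5$ (computable, e.g.\ from the tables in \cite{Brb}); and the definition of ${k\brack ij}$ is homogeneous of degree $+1$ in the chosen Killing form, so ${4\brack 22}=c\cdot\bigl[{2\brack 11}\bigr]_{\fr{f}}=6$ and ${6\brack 24}=c\cdot\bigl[{3\brack 12}\bigr]_{\fr{f}}=2$, consistent with the value of ${6\brack 24}$ already forced by (\ref{eq45}) --- which serves as a welcome cross-check. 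Substituting ${4\brack 22}=6$ back into (\ref{eq45}) yields ${5\brack 14}=2$, ${5\brack 23}=2$, ${6\brack 15}=1$, ${6\brack 33}=2$, and then (\ref{eq43}) delivers ${2\brack 11}=8$, ${4\brack 13}=4$, ${3\brack 12}=6$, completing the list claimed in the proposition.
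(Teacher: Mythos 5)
Your proposal is correct and follows essentially the same three-step route as the paper: the K\"ahler--Einstein constraint giving (\ref{eq34}), the submersion over $\E_8/\SU(6)\times\SU(2)\times\SU(3)$ giving (\ref{eq42})--(\ref{eq45}), and the twistor fibration over $\E_8/\E_7\times\SU(2)$ with the rescaling $c=B_{\E_7}/B_{\E_8}=3/5$ to pin down ${4\brack 22}=6$. The only cosmetic difference is that the paper derives the base-space Ricci components (\ref{eq117}) directly from Theorem \ref{ric1} and Lemma \ref{lem55} before matching them with the horizontal parts, rather than ``recovering'' them from the matching; your observation that ${6\brack 24}=2$ is obtained twice and serves as a consistency check is also made implicitly in the paper.
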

 
 \medskip
 \subsubsection{\bf Solutions of the homogeneous Einstein equation}
By  using Proposition \ref{structure2}  and the dimensions $d_{i}=\dim_{\bb{R}}\fr{m}_{i}$ presented in \textsection \ref{MAINSEC}, the components  $r_i$ $(1\leq i\leq 6)$ of the Ricci tensor are completely determined by equation  (\ref{eq25}).  In particular,  a $G$-invariant metric  $( \ , \ )=(x_1, x_2, x_3, x_4, x_5, x_6)\in\bb{R}_{+}^{6}$  on $G/H=\E_8/\U(1)\times \SU(2)\times \SU(3)\times  \SU(5)$,   is an Einstein metric, if and only if it is a positive real solution of the following system
\begin{equation}\label{final}
\Big\{r_1-r_2=0, \ \  r_2-r_3=0, \ \ r_3-r_4=0, \ \ r_4-r_5=0,  \ \ r_5-r_6=0\Big\}, 
\end{equation}
where the components $r_{i}$ are given as follows: 
{\small 
\begin{equation}
\left\{
\begin{array}{l} 
 \displaystyle 
r_1 =   \frac{1}{2 x_1} -
\frac{x_2}{15{x_1}^2} + \frac{1}{20}
\biggl(\frac{x_1}{x_2 x_3} - \frac{x_2}{x_1 x_3}- \frac{x_3}{x_1 x_2}
\biggr) +  \frac{1}{30}
\biggl(\frac{x_1}{x_3 x_4} - \frac{x_3}{x_1 x_4}- \frac{x_4}{x_1 x_3}
\biggr) \\
 \displaystyle 
  +  \frac{1}{60}
\biggl(\frac{x_1}{x_4 x_5} - \frac{x_4}{x_1 x_5}- \frac{x_5}{x_1 x_4}
\biggr)
 + \frac{1}{120}\biggl(\frac{x_1}{x_5 x_6} - \frac{x_5}{x_1 x_6}- \frac{x_6}{x_1 x_5}
\biggr)   \\ 
 \displaystyle
r_2 =   \frac{1}{2x_2} + 
\frac{1}{30}\biggl( \frac{ x_2}{{x_1}^2} - \frac{2}{x_2}\biggr)
 - \frac{1}{20}\frac{x_4}{{x_2}^2}   +  \frac{1}{20}
\biggl(\frac{x_2}{x_1 x_3} - \frac{x_1}{x_2 x_3}- \frac{x_3}{x_1 x_2}
\biggr)  +  \frac{1}{60}
\biggl(\frac{x_2}{x_3 x_5} - \frac{x_3}{x_2 x_5}- \frac{x_5}{x_2 x_3}
\biggr)  \\ 
 \displaystyle+  \frac{1}{60}
\biggl(\frac{x_2}{x_4 x_6} - \frac{x_4}{x_2 x_6}- \frac{x_6}{x_2 x_4}
\biggr)  
\\  
 \displaystyle
r_3 = \frac{1}{2 x_3} - \frac{1}{40} \frac{x_6}{{x_3}^{2}}+ 
\frac{3}{40}
\biggl(\frac{x_3}{x_1 x_2} - \frac{x_2}{x_1 x_3}- \frac{x_1}{x_3 x_2} \biggr) 
 +  \frac{1}{20}
\biggl(\frac{x_3}{x_1 x_4} - \frac{x_1}{x_3 x_4}- \frac{x_4}{ x_1 x_3}
\biggr) 
\\  
 \displaystyle  +  \frac{1}{40}
\biggl(\frac{x_3}{x_2 x_5} - \frac{x_2}{x_3 x_5}- \frac{x_5}{x_3 x_2}
\biggr) 
\\ 
 \displaystyle
r_4 =  \frac{1}{2 x_4} + 
\frac{1}{20}\biggl(\frac{x_4}{{x_2}^2} - \frac{2}{x_4}\biggr)
  +  \frac{1}{15}
\biggl(\frac{x_4}{x_1 x_3} - \frac{x_1}{x_3 x_4}- \frac{x_3}{x_1 x_4}
\biggr)  
 +  \frac{1}{30}
\biggl(\frac{x_4}{x_1 x_5} - \frac{x_1}{x_4 x_5}- \frac{x_5}{x_1 x_4}
\biggr) \\ 
 \displaystyle
+  \frac{1}{30}
\biggl(\frac{x_4}{x_2 x_6} - \frac{x_2}{x_4 x_6}- \frac{x_6}{x_2 x_4}
\biggr)
\\ 
 \displaystyle
r_5 =  \frac{1}{2 x_5} +  
\frac{1}{12} 
\left(\frac{x_5}{x_1 x_4} - \frac{x_1}{x_4 x_5}- \frac{x_4}{x_1 x_5}
\right)+ \frac{1}{12}
\left(\frac{x_5}{x_2 x_3} - \frac{x_2}{x_3 x_5}- \frac{x_3}{x_2 x_5}
\right)   +
  \frac{1}{24}
\left(\frac{x_5}{x_1 x_6} - \frac{x_1}{x_5 x_6}- \frac{x_6}{x_1 x_5}
\right) 
\\  
 \displaystyle
r_6 = \frac{1}{2 x_6} + 
\frac{1}{20}\biggl( \frac{x_6}{{x_3}^2} - \frac{2}{x_6}\biggr)
  +  \frac{1}{20}
\biggl(\frac{x_6}{x_1 x_5} - \frac{x_1}{x_5 x_6}- \frac{x_5}{x_1 x_6}
\biggr)   
 +  \frac{1}{10}
\biggl(\frac{x_6}{x_2 x_4} - \frac{x_2}{x_4 x_6}- \frac{x_4}{x_2 x_6}
\biggr) .
\end{array}
\right.
 \end{equation}
 }
 We normalize our equations by setting $x_1 =1$. 
   We see that the system of   equations (\ref{final}) reduces to the following system of polynomial equations: 
{\small{ 
\begin{equation}\label{difficult}
\left\{
\begin{array}{l} 
 \displaystyle 
 f_1 = -6x_3 x_4^2 x_5 x_6 + 2 x_2^3 \big(x_4 (1 + 6 x_5) x_6 + x_3 (x_5 + 6 x_4 x_5 x_6)\big) - 
 2x_2 \big(x_3^2 x_4 x_6 + x_4 x_5 (6 + x_5) x_6  \\ 
 + x_3 x_5 (x_4^2 - 26 x_4 x_6 + x_6^2)\big)+
x_2^2 \Big(4 x_3^2 x_5 x_6 + 4 (-1 + x_4^2) x_5 x_6 + 
    x_3 \big(2 x_4^2 x_6 + 2 (-1 + x_5^2) x_6  \\
     +    x_4 (-1 + x_5^2 - 60 x_5 x_6 + x_6^2)\big)\Big) = 0 \\
 f_2 = -6 x_3^2 x_4^2 x_5 x_6 +  3 x_2^2 x_5 x_6 \big(-2 x_3^3 + 2 x_3 (1 - 10 x_4 + x_4^2) + x_4 x_6\big) +   x_2^3 x_3 \big(5 x_4 (1 + 3 x_5) x_6 \\ 
 + 2 x_3 (x_5 + 2 x_4 x_5 x_6)\big) + x_2 x_3 \big(x_4 x_5 (3 + x_5) x_6 - 5 x_3^2 x_4 (1 + 3 x_5) x_6 - 
     2 x_3 x_5 (x_4^2 - 26 x_4 x_6 + x_6^2)\big)=0  \\  
 f_3 = -6 x_3^2 x_4^2 x_5 x_6 + 
 x_2^2 x_6 \big(14 x_3^3 x_5 + 2 x_3 (1 + 30 x_4 - 7 x_4^2) x_5 - 
    4 x_3^2 (-1 + x_4^2 + 12 x_5 - x_5^2) - 3 x_4 x_5 x_6\big) \\
    +  x_2^3 x_3 \big(4 x_3 x_5 - 3 x_4 (1 + 3 x_5) x_6\big) + 
 x_2 x_3 \big(-3 x_4 x_5 (3 + x_5) x_6 + 3 x_3^2 x_4 (1 + 3 x_5) x_6 + 
    4 x_3 x_5 (-x_4^2 + x_6^2)\big)=0 \\
 f_4 = 6 x_3 x_4^2 x_5 x_6 + x_2^3 (-4 x_3 x_5 + 10 x_4 x_6) + 
 2 x_2 \big(5 x_3^2 x_4 x_6 - 5 x_4 x_5^2 x_6 + 2 x_3 x_5 (x_4^2 - x_6^2)\big)
 + x_2^2 \Big(-8 x_3^2 x_5 x_6 \\
 + 8 (-1 + x_4^2) x_5 x_6 + 
    x_3 \big(14 x_4^2 x_6 + 2 (3 + 24 x_5 - 7 x_5^2) x_6 - 
       5 x_4 (-1 + x_5^2 + 12 x_6 - x_6^2)\big)\Big)=0 \\
 f_5 = 2 x_2^2 x_3 (6 x_3 x_5 - 5 x_4 x_6) - 
 2 x_3 \big(5 x_3^2 x_4 x_6 - 5 x_4 x_5^2 x_6 + 6 x_3 x_5 (-x_4^2 + x_6^2)\big)\\
  + x_2 \Big(-6 x_4 x_5 x_6^2 + 
    x_3^2 \big(-10 x_4^2 x_6 + 10 (-1 + x_5^2) x_6 + 
       x_4 (1 - 48 x_5 + 11 x_5^2 + 60 x_6 - 11 x_6^2)\big)\Big)=0.
\end{array}
\right. 
      \end{equation}}}
  We need now to find non-zero solutions of equations (\ref{difficult}). By following a similar approach like case (E), i.e.,
   by considering a polynomial ring $R= {\mathbb Q}[y, x_2, x_3, x_4, x_5, x_6] $ and an ideal $I$ generated by 
\[
\{ f_1, \,f_2,  \,f_3, \, f_4, \, f_5,  \,y \, x_2  x_3  x_4  x_5 x_6  -1\},\]
  then  we see that is very difficult to 
 compute  a  Gr\"obner basis for the ideal $I$.   For this case we use  the software package HOM4PS-2.0, which is based on the  homotopy continuation method for solving  polynomial systems (see \cite{homp}) and enable us to obtain explicitly all positive real solutions of system (\ref{difficult}). We present the following result:
   
\begin{prop}\label{propB}
The  generalized flag manifold  $M=G/H=\E_8/\SU(5)\times \SU(3)\times \SU(2)\times \U(1)$   admits (up to a scale) precisely four non-K\"ahler     $\E_8$-invariant Einstein metrics. These  $\E_8$-invariant Einstein metrics $g=(x_1, x_2, x_3, x_4, x_5, x_6)$ are given  approximately by
\[
\begin{tabular}{lllll}
$(1) \ x_1=1,$ &   $x_2\approx 0.954875,$ &  $x_3\approx 0.965321$, & $x_4\approx 1.00534$, & $x_5\approx 0.290091$, \\
$(2) \ x_1=1,$ &   $x_2\approx 0.986536$, & $x_3\approx 0.636844$, & $x_4\approx 1.06853$, & $x_5\approx  1.13323$,\\
$(3) \ x_1=1,$ &   $x_2\approx 0.90422$, & $x_3\approx 0.778283$, & $x_4\approx 0.927483$, & $x_5\approx 1.03408$, \\
$(4) \ x_1=1,$ &   $ x_2\approx 0.82308$, & $x_3\approx  1.14673$, & $x_4\approx 1.17377$, & $x_5 \approx 1.42664$,    
\end{tabular}
\begin{tabular}{l}
$x_6\approx 1.01965$.\\
 $x_6 \approx 0.921127$.\\
 $x_6\approx 0.359949$.\\
 $x_6\approx  1.46519$.
 \end{tabular}
 \]
 and the Einstein constants $\lambda$ are given  by 
$$ (1) \ \lambda \approx 67.805543,  \  (2)\ \lambda \approx 0.348602829, \  (3)\ \lambda \approx 68.228353,  \ (4)\ \lambda \approx 0.313933143,$$
respetively.
\end{prop}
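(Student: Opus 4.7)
The plan is to reduce the Einstein condition to the polynomial system already assembled in \eqref{difficult} and then extract all positive real solutions by numerical continuation. First I would recall that, by the discussion in \textsection\ref{MAINSEC}, the components $r_1,\dots,r_6$ of the Ricci tensor of an $\E_8$-invariant metric $( \ , \ )=(x_1,\dots,x_6)$ are entirely determined once the dimensions $d_i=\dim_{\bb{R}}\fr{m}_i$ and the nine non-zero structure constants $\displaystyle{k\brack ij}$ are fixed. These dimensions were computed just after Proposition~\ref{newprop2}, and the structure constants are exactly those given in Proposition~\ref{structure2}, obtained by combining the Kähler--Einstein normalization of Proposition~\ref{B-H} with the Riemannian-submersion identities \eqref{eq42} and with the twistor-fibration computation that yields $\displaystyle{4\brack 22}=6$ and $\displaystyle{6\brack 24}=2$ via the transition constant $c=B_{\E_7}/B_{\E_8}=3/5$.

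Substituting these data into \eqref{eq25} and normalizing by $x_1=1$, the Einstein system $\{r_1-r_2=\cdots=r_5-r_6=0\}$ becomes the five-variable polynomial system \eqref{difficult} in the unknowns $(x_2,x_3,x_4,x_5,x_6)\in\bb{R}_+^{5}$. The task therefore reduces to enumerating all strictly positive real solutions of this system.

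In contrast with case (E), the lexicographic Gröbner basis of the ideal $I=\langle f_1,\dots,f_5,\,y\,x_2x_3x_4x_5x_6-1\rangle\subset\bb{Q}[y,x_2,\dots,x_6]$ is computationally out of reach (this is the main technical obstacle). To bypass it, I would invoke the polyhedral homotopy continuation method as implemented in the package HOM4PS-2.0 \cite{homp}, which produces numerical approximations of \emph{all} isolated complex solutions of \eqref{difficult}. After running the solver one discards non-real solutions, solutions with any coordinate non-positive, and the unique solution corresponding to the Kähler--Einstein metric of Proposition~\ref{B-H} (normalized so that $x_1=1$, $x_k=k$), since that one is explicitly excluded from the statement. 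Exactly four positive real solutions should remain, and these are the ones listed in the statement.

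Finally, to verify that the four metrics obtained are pairwise non-isometric (and hence to conclude the ``precisely four'' assertion up to isometry and scale), I would compute the Einstein constants $\lambda$ of each by plugging the numerical values back into any one of the $r_i$, and, as an additional safeguard, compute the scale invariant $H_g=V_g^{1/d}S_g$ introduced in \textsection 4 (with $d=\sum_{i=1}^{6}d_i$ and $V_g=\prod x_i^{d_i}$); distinct values of $H_g$ rule out mutual isometry. This is the same pattern used in the proof of Proposition~\ref{ThmApre}, so the argument goes through in the same way once the output of HOM4PS-2.0 is in hand.
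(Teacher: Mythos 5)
Your proposal follows essentially the same route as the paper: reduce the Einstein condition to the polynomial system (\ref{difficult}) using the structure constants of Proposition \ref{structure2} and the dimensions $d_i$, note that a Gr\"obner basis is out of reach, enumerate all positive real solutions with HOM4PS-2.0, discard the K\"ahler--Einstein solution $(1,2,3,4,5,6)$, and distinguish the remaining four metrics via the scale invariant $H_g$. This matches the paper's argument for Proposition \ref{propB} (and the subsequent non-isometry check) in all essentials.
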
 

Similarly with case (E),  for any $G$-invariant Einstein metric $g = (x_1, x_2, x_3, x_4, x_5, x_6)$ on $M$ we
consider the scale invariant $H_g = {V_g}^{1/d }S_g$, where  $\displaystyle d = \sum^6_{i=1} d_i$, $S_g$ is the scalar curvature of $g$
and $V_g$ is the volume $\displaystyle V_g = \prod^6_{i=1} {x_i}^{d_i}$ of the given metric $g$. We compute the  scale invariant  $H_g$ for  invariant Einstein metrics above and we see that 
$$ (1) \ H_g \approx 67.805543,  \  (2)\ H_g \approx 68.468503, \  (3)\ H_g \approx 68.228353,  \ (4)\ H_g \approx 68.685589$$
respectively.  Since  we get different  values we conclude    that  these invariant Einstein metrics can not be isometric each other.

By normalizing  Einstein constant  $\lambda =1$, we conclude that
\begin{theorem}\label{ThmB}
  The generalized flag manifold  $M=G/H=\E_8/\SU(5)\times \SU(3)\times \SU(2)\times \U(1)$   admits  precisely four   non-K\"ahler  $E_8$-invariant Einstein metrics up to isometry. 
These  $\E_8$-invariant Einstein metrics $g=(x_1, x_2, x_3, x_4, x_5, x_6)$ are given  approximately by
\[
\begin{tabular}{llllll}
  $(1) \  x_1  \approx 0.349296$ &  $x_2 \approx  0.333534, $ & $x_3  \approx  0.337183, $ & $x_4  \approx 0.35116$, & $ x_5   \approx  0.101328,$  & $ x_6   \approx  0.356159,$\\
 $(2) \ x_1  \approx 0.348603$ &  $x_2 \approx   0.343909, $ & $x_3  \approx 0.222006,$ & $x_4  \approx 0.372492,$ & $x_5   \approx  0.395047,$ & $ x_6   \approx   0.321107,$\\
$(3) \  x_1 \approx 0.367518,$ &  $x_2 \approx  0.332318,$ & $x_3  \approx 0.286033,$ & $x_4  \approx 0.340867,$ & $x_5   \approx  0.380043,$  & $ x_6   \approx  0.132288,$\\
$(4) \ x_1  \approx 0.313933,$ & $x_2 \approx  0.258393,$ & $x_3  \approx 0.359988$ & $x_4  \approx 0.368484,$ & $x_5   \approx  0.44787$ & $ x_6   \approx 0.459972$. 
\end{tabular}
\] 
\end{theorem}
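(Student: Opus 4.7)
The plan is to obtain Theorem~\ref{ThmB} as an immediate scaling corollary of Proposition~\ref{propB}, which already supplies the four non-K\"ahler Einstein metrics $g_i = (1, x_2^{(i)}, \ldots, x_6^{(i)})$ together with the corresponding Einstein constants $\lambda_i$ and scale invariants $H_{g_i}$. Since all the substantive work---reducing the Einstein equation to the polynomial system (\ref{difficult}) via the Riemannian submersion onto $\E_8/\SU(6)\times \SU(2)\times \SU(3)$ and the twistor fibration over $\E_8/\E_7\times \SU(2)$, then extracting positive real solutions by HOM4PS-2.0---has been carried out, the remaining content of Theorem~\ref{ThmB} is purely the normalization $\lambda=1$ together with the confirmation that the four metrics are mutually non-isometric.

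First I would apply the standard scaling principle: if a $G$-invariant metric $g = (x_1, \ldots, x_6)$ of the form (\ref{metric002}) satisfies $\Ric_g = \lambda\, g$ with $\lambda>0$, then $\tilde g := \lambda\, g$ has the same Ricci tensor as $g$ (the Ricci tensor is invariant under constant rescaling of the metric), so $\Ric_{\tilde g} = \lambda\, g = \tilde g$, i.e.\ $\tilde g$ is Einstein with constant $1$. In the coordinates of (\ref{metric002}) the metric $\lambda g$ has parameters $(\lambda x_1, \ldots, \lambda x_6)$, so multiplying each tuple of Proposition~\ref{propB} by the corresponding constant $\lambda_i$ produces precisely the numerical tuples listed in the statement; this is a routine numerical verification.

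Next I would verify that the four Einstein metrics lie in distinct isometry classes, and are also not isometric to the unique K\"ahler--Einstein metric given by Proposition~\ref{B-H}. For this I would use the scale invariant $H_g = V_g^{1/d}\, S_g$, which depends only on the isometry class of $g$ and is preserved under homothety. Proposition~\ref{propB} records four pairwise distinct numerical values $H_{g_1},\ldots,H_{g_4}$, hence the four rescaled metrics remain pairwise non-isometric after normalization; a short computation shows that the scale invariant of the K\"ahler--Einstein metric is yet a different value, ruling out coincidence with any of the four non-K\"ahler solutions.

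The main obstacle is in fact not in Theorem~\ref{ThmB} itself, but in the preceding Proposition~\ref{propB}: once one has determined the nine non-zero structure constants $\displaystyle {k \brack ij}$ via the submersion/twistor argument of Proposition~\ref{structure2}, one is still faced with a dense system of five non-linear polynomial equations in five variables for which a Gr\"obner basis computation appears infeasible. The decisive step there is to abandon symbolic elimination and invoke the polyhedral homotopy continuation method of HOM4PS-2.0 to enumerate all positive real roots. Relative to that computational obstacle, the passage from Proposition~\ref{propB} to Theorem~\ref{ThmB} is essentially bookkeeping.
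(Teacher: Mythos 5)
Your proposal is correct and follows essentially the same route as the paper: Theorem~\ref{ThmB} is obtained from Proposition~\ref{propB} by rescaling each solution $g$ to $\lambda g$ so that the Einstein constant becomes $1$, and the four metrics are distinguished up to isometry by the pairwise distinct values of the scale invariant $H_g = V_g^{1/d}S_g$. (Your observation that one should also check non-coincidence with the K\"ahler--Einstein metric is a harmless extra verification; note also that the $\lambda$ values printed in Proposition~\ref{propB} for solutions (1) and (3) are evidently misprints duplicating the $H_g$ values, the correct constants being $\lambda\approx 0.349296$ and $\lambda\approx 0.367518$ as your rescaling recovers.)
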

 
Main Theorem in Introduction is now a consequence of   Theorems \ref{ThmA} and \ref{ThmB}, and the  results stated in Table 1.


\begin{thebibliography}{69}

\bibitem[APe]{AP} 
 D. V. Alekseevsky and A. M. Perelomov: 
{\it Invariant K\"ahler-Einstein metrics on compact homogeneous spaces}, 
Funct. Anal. Appl. 20 (3)  (1986),  171--182. 
\bibitem[AA]{AA}  D. V. Alekseevsky and A. Arvanitoyeorgos: {\it Riemannian flag manifolds with homogeneous geodesics}, Trans. Amer. Math. Soc. 359 (8)  (2007),  3769--3789. 
  \bibitem[AnC]{Stauros}
 S. Anastassiou, I. Chrysikos,
 \emph{The Ricci flow approach to homogeneous Einstein metrics on flag manifolds},
     J.   Geom.   Phys. 61 (2011), 1587-1600.
 \bibitem[AC1]{Chry1} A. Arvanitoyeorgos and I. Chrysikos: {\it Motion of charged particles in K\"ahler $C$-spaces with two isotropy summands},  Tokyo J. Math. (32) 2 (2009), 1-14. 
 \bibitem[AC2]{Chry2}  A. Arvanitoyeorgos and I. Chrysikos: {\it Invariant Einstein metrics on generalized flag manifolds with two isotropy summands},   J.  Austral.  Math.  Soc. 90  (02) (2011), 237 -- 251.
  \bibitem[AC3]{Chry3} A. Arvanitoyeorgos and I. Chrysikos: {\it Invariant Einstein metrics on  flag manifolds with four isotropy summands},  Ann. Glob. Anal. Geom. 37 (2010), 185-219. 
\bibitem[ACS]{ACS}  A. Arvanitoyeorgos, I. Chrysikos and Y. Sakane: 
 {\it Homogeneous Einstein metrics  on generalized flag manifolds 
$Sp(n)/(U(p)\times U(q)\times Sp(n-p-q))$}, 
Proceedings of the 2nd International Colloquium on Differential Geometry and its Related Fields,   Veliko Tarnovo, Bulgaria, September 6-10 2010, 
World Scientific Publishing Co. Pte. Ltd, 2011, 1-24. 
\bibitem[Be]{Be} 
A. L. Besse: 
 {\it Einstein Manifolds}, 
  Springer-Verlag, Berlin, 1986. 
  \bibitem[B\"oKe]{BoKe}  C. B\"ohm and M. M. Kerr: {\it Low dimensional homogeneous Einstein manifolds}, Trans. Amer. Math. Soc. 358 (4)  (2005),  1455-1468.  
   \bibitem[BWZ]{BWZ}
  C. B\"ohm, M. Wang and W. Ziller: 
  {\it A variational approach for homogeneous Einstein metrics}, Geom. Funct. Anal. 14 (2004) (4) 681-733.
    \bibitem[BHi]{Bor}  A. Borel and F. Hirzebruch:  {\it Characteristic classes and homogeneous spaces I},  Amer. J. Math. 80  (1958),  458--538.
  \bibitem[Brb]{Brb}  N. Bourbaki--\'El\'ements De Math\'ematique: 
  {\it Groupes Et Alg$\grave{e}$bres De Lie}, Chapitres 4, 5 et 6,  Masson Publishing,  Paris, 1981.
     \bibitem[BuR]{Bur} 
  F. E. Burstall and J. H. Rawnsley: {\it Twistor Theory for Riemannian Symmetric Spaces},
  Lectures Notes in Mathematics, Springer-Verlag, 1990.
  \bibitem[Chr]{Chry} I. Chrysikos: {\it  Flag manifolds, symmetric ${t}$-triples and Einstein metrics},   
    Diff. Geom.  Appl. 30 (2012), 642--659 (DOI: 10.1016/j.difgeo.2012.09.001).
  \bibitem[DiK]{DKer}  W. Dickinson and M. Kerr:
  {\it The geometry of compact homogeneous spaces with two isotropy summands},   Ann. Glob. Anal. Geom. 34 (2008),   329-350.
      \bibitem[Kim]{Kim}  M. Kimura:  {\it Homogeneous Einstein metrics on certain K\"ahler C-spaces}, Adv. Stud. Pure. Math.  18-I  (1990),  303--320.
      \bibitem[LeLT]{homp}
T. L. Lee, T. Y. Li and C. H. Tsai:  {\it HOM4PS-2.0, A software package for solving polynomial systems by the polyhedral homotopy continuation method}, Computing, 83   (2008),  109-133. 
    \bibitem[ON]{O'Neill}
  B. O'Neill: {\it The fundamental equation of a submersion}, Michigan Math. J. 13 (1966) 459--469.
     \bibitem[PaS]{PS}
  J-S. Park and Y. Sakane:
  {\it Invariant Einstein metrics on certain homogeneous spaces},
  Tokyo J. Math.  20   (1) (1997), 51--61.
    \bibitem[Sak]{Sakane}
 Y. Sakane:
 {\it Homogeneous Einstein metrics on principal circle bundles II},
  in Differential Geometry, Proceedings of the symposioum in honour 
  of Professor Su Buchin on his 90th birthday (Editors: C. H. Gu, H. S. Hu, Y. L. Xin), World Scientific Publishing, (1993),   177--186. 
  \bibitem[Tak]{Tak}  M. Takeuchi:  {\it Homogeneous K\"ahler submanifolds in complex projective spaces},  Japan. J. Math.  4  (1)  (1978),  171--219. 
   \bibitem[WZ1]{Wa1} 
M. Wang and W. Ziller: 
{\it On normal homogeneous Einstein manifolds},
 Ann. Scient. $\acute{E}$c. Norm. Sup.   18  (4) (1985),  563--633.
  \bibitem[WZ2]{Wa2}
  M. Wang and W. Ziller: 
  {\it Existence and non-excistence of homogeneous Einstein metrics}, 
  Invent.~Math.~84 (1986),  177--194.
   \bibitem[Wo1]{Wo} J. A. Wolf: {\it The geometry and the structure of isotropy irreducible homogeneous spaces}, Acta. Math., 120 (1968) 59-148; correction: Acta Math., 152, (1984), 141-142.
   \bibitem[Wo2]{wol}   J. A. Wolf,  {\it Spaces of
Constant  Curvature},  Publish or Perish, Wilmington, 1984.
  \end{thebibliography}
\end{document}